\documentclass[11pt]{amsart}

\usepackage{latexsym,rawfonts}
\usepackage{amsfonts,amssymb}
\usepackage{amsmath,amsthm}

\usepackage[plainpages=false]{hyperref}
\usepackage{graphicx}
\usepackage{color}

\pagestyle{plain}

\newtheorem{theorem}{Theorem}

\newtheorem{lemma}{Lemma}
\newtheorem{proposition}{Proposition}

\theoremstyle{definition}
\newtheorem{definition}{Definition}


\newcommand{\beq}{\begin{equation}}
\newcommand{\eeq}{\end{equation}}
\newcommand{\beqs}{\begin{eqnarray*}}
\newcommand{\eeqs}{\end{eqnarray*}}
\newcommand{\beqn}{\begin{eqnarray}}
\newcommand{\eeqn}{\end{eqnarray}}
\newcommand{\beqa}{\begin{array}}
\newcommand{\eeqa}{\end{array}}


\usepackage{amsmath}
\numberwithin{equation}{section}
\numberwithin{theorem}{section}
\numberwithin{lemma}{section}
\numberwithin{remark}{section}
\numberwithin{assumption}{section}
\numberwithin{definition}{section}
\numberwithin{fact}{section}
\numberwithin{question}{section}
\numberwithin{proposition}{section}

\begin{document}

\title{The Dirichlet problem for mixed Hessian equations on Hermitian manifolds}

\author{Qiang Tu}
\address{Faculty of Mathematics and Statistics, Hubei Key Laboratory of Applied Mathematics, Hubei University,  Wuhan 430062, P.R. China}
\email{qiangtu@hubu.edu.cn}

\author{Ni Xiang$^{\ast}$}
\address{Faculty of Mathematics and Statistics, Hubei Key Laboratory of Applied Mathematics, Hubei University,  Wuhan 430062, P.R. China}
\email{nixiang@hubu.edu.cn}


\date{}

\keywords{Complex Hessian equations; Dirichlet problem; Hermitian manifold.}

\subjclass[2010]{32W50, 53C55.}

\thanks{This research was supported by funds from  Natural Science Foundation of Hubei Province, China, No. 2020CFB246 and the National Natural Science Foundation of China No. 11971157, 12101206.}
\thanks{$\ast$ Corresponding author}

\begin{abstract}
In this paper we study the Dirichlet problem for a class of Hessian type equation with its structure as a combination of elementary symmetric functions on Hermitian manifolds. Under some conditions with the initial data on manifolds and admissible subsolutions, we derive a priori estimates for this complex mixed Hessian equation and solvability of the corresponding Dirichlet problem.

\end{abstract}

\maketitle
\vskip4ex

\section{Introduction}

Let $(M, \omega)$ be a closed Hermitian manifold of complex dimension $n\geq2$ with smooth boundary $\partial M$ and $\bar{M}=M\cup \partial M$,
 fix a real smooth closed $(1,1)$-form $\chi_0$ on $M$. For any $C^2$
function $u: M \rightarrow \mathbb{R}$, we can obtain a new real $(1,1)$-form
\begin{eqnarray*}
\chi_u=\chi_0+\frac{\sqrt{-1}}{2}\partial\overline{\partial} u.
\end{eqnarray*}
We consider the following Dirichlet problem of Hessian type equation on $(M, \omega)$
\begin{equation}\label{K-eq}
\left\{
\begin{aligned}
&\chi^{k}_{u}\wedge \omega^{n-k}=\sum_{l=0}^{k-1}\alpha_l(z)\chi^{l}_{u}\wedge \omega^{n-l}, && in~ M,\\
&u=\varphi && on~ \partial M,
\end{aligned}
\right.
\end{equation}
where $1<k\leq n$, $\alpha_l(z)$ and $\varphi$ are real smooth functions on $M$ and $\partial M$, respectively.
Note that this is a class of fully nonlinear equation with its structure as a combination of elementary symmetric functions on  Hermitian manifolds.
In order to keep the ellipticity of  equation \eqref{K-eq}, we require  the eigenvalues of $(1,1)$-form  $\chi_u$ with respect to $\omega$ belong to
the G{\aa}rding's cone $\Gamma_{k-1}$.  Hence we introduce the following definition.
\begin{definition}\label{def-1}
A function $u\in C^2(M)$ is called $k$-admissible  if $\chi_u \in \Gamma_k(M)$ for any $z\in M$,
 where $\Gamma_k(M)$ is the G{\aa}rding cone
\begin{eqnarray*}\label{cone}
\Gamma_k(M)=\{ \chi  \in \mathcal{A}^{1,1}(M):\sigma _i (\lambda[\chi])>0, \ \forall \ 1 \le i \le k\}.
\end{eqnarray*}
\end{definition}
{}
The equation in \eqref{K-eq}
\begin{eqnarray}\label{K-eq-11}
\chi^{k}_{u}\wedge \omega^{n-k}=\sum_{l=0}^{k-1}\alpha_l(z)\chi^{l}_{u}\wedge \omega^{n-l}
\end{eqnarray}
includes some of the most partial differential equations in complex geometry and analysis. When $k=n$ and $\alpha_1=\cdots=\alpha_{n-1}=0$, the equation becomes the complex Monge-Amp\`ere equation $\chi^{n}_{u}=\alpha_0 \omega^n$, which was solved by Yau \cite{Yau78} on closed K\"ahler manifolds in the resolution of the Calabi conjecture. Then
Tosatti-Weinkove \cite{Tos10,Tos19} have solved the analogous problem for the  equation on closed Hermitian manifolds.  The corresponding Dirichlet problems on  manifolds were studied by  Cherrier-Hanani \cite{Che87} and Guan-Li \cite{Guan10, Guan12, Guan13}.

In fact, the Hessian equation $\chi^{k}_{u}\wedge \omega^{n-k}=\alpha_0 \omega^n$
and the Hessian quotient equation $\chi^{k}_{u}\wedge \omega^{n-k}=\alpha_l(z)\chi^{l}_{u}\wedge \omega^{n-l}$ are also the special case of \eqref{K-eq-11}.
For equation $\chi^{k}_{u}\wedge \omega^{n-k}=\alpha_0 \omega^n$, Hou-Ma-Wu \cite{Hou10} established  the second order estimates for the equation without boundary on K\"ahler manifold, and then Dinew-Kolodziej \cite{Din12} solved the equation by combining the Liouville theorem and Hou-Ma-Wu's results. Zhang\cite{Zhang17} and Sz\'ekelyhidi  \cite{Sze18} have solved the equation without boundary  on Hermitian manifolds. The corresponding Dirichlet problem on  manifolds has also attracted the interest of many researchers, such as Gu-Nguyen \cite{GN-18} were able to obtain continuous solutions to the equation with boundary on Hermitian manifolds, and Collins-Picard \cite{Co19} solved the  problem  under the existence of a subsolution.  For $(k,l)$-Hessian quotient equation $\chi^{k}_{u}\wedge \omega^{n-k}=\alpha_l(z)\chi^{l}_{u}\wedge \omega^{n-l}$, the $(n,n-1)$-Hessian quotient equation have appeared in a problem proposed by Donaldson in the setting of moment maps and  was solved by Song-Weinkove \cite{Song08}.
Then $(n, l)$-Hessian quotient equation  was considered by Fang-Lai-Ma \cite{Fang11} on K\"ahler manifold, and by Guan-Li \cite{Guan12}, Guan-Sun \cite{Guan15}  on Hermitian manifolds.
The general $(k,l)$-Hessian quotient equation with $k<n$ without boundary on Hermitian manifolds was studied by  Sz\'ekelyhidi \cite{Sze18} for constants $\alpha_l$ and by  Sun \cite{Sun17} for functions $\alpha_l$.  The corresponding Dirichlet problem on Hermitian manifolds was studied by Feng-Ge-Zheng \cite{Feng20}, since they can only obtain the gradient estimates in some special cases, the existence of solution can be solved in these kinds of special cases.

When $k=n$ and $\alpha_l \in \mathbb{R}$, equation \eqref{K-eq-11} was raised as a conjecture by Chen \cite{Chen00} in the study of Mabuchi energy. The conjecture was solved by Collins-Sz\'ekelyhidi \cite{Co17} for some  special constant  $\alpha_l$.  Later, Phong-T\^o \cite{Ph17-1} generalized  Collins-Sz\'ekelyhidi's result for nonnegative constants $\alpha_l$. Moreover, a generalized equation of Chen's problem was studied by Sun \cite{Sun14-1,Sun14-2} and Pingali \cite{ Pin14-1, Pin14-2,Pin16}.

The initial motivation of our work is the following: As an important example for the applications of the general notion of fully nonlinear elliptic equations, Krylov \cite{Kry95} studied the Dirichlet problem of the equation
\begin{eqnarray}\label{K-eq-122}
\sigma_k(D^2u)=\sum_{l=0}^{k-1}\alpha_l(x) \sigma_l(D^2 u)
\end{eqnarray}
in a $(k-1)$-convex domain in $\mathbb{R}^n$ with $\alpha_l>0 (0\leq l\leq k-1)$. Recently, Guan-Zhang \cite{GZ19} observed that equation \eqref{K-eq-122} can be rewritten as   the following equation
\begin{eqnarray}\label{K-eq-13}
\frac{\sigma_k}{\sigma_{k-1}}(D^2u) -\sum_{l=0}^{k-2}\alpha_l(x) \frac{\sigma_l}{\sigma_{k-1}}(D^2 u) =-\alpha_{k-1}.
\end{eqnarray}
Actually,  the equation is elliptic and concave in $\Gamma_{k-1}$. Then they obtained a priori $C^2$ estimate of the $(k-1)$-admissible solution of equation \eqref{K-eq-13} without sign requirement for $\alpha_{k-1}$ and solved  the Dirichlet
problem for the corresponding equation.  Later the corresponding Neumann problem and prescribed  curvature equations were also discussed in \cite{CCX19, CCX21, Zhou22,Chen20}. 
Recently, Zhang \cite{Zhang21}  cosidered the Dirichlet problem for \eqref{K-eq-122} on complex domains in $\mathbb{C}^n$, which can be seen as the extension of Guan-Zhang's result. Chen \cite{Chen21} and Zhou \cite{Zhou21} provide a sufficient and necessary condition for the solvability of the mixed hessian equation on  K\"ahler manifold without boundary. A natural problem is raised whether we can consider the Dirichlet problem for  equation \eqref{K-eq-122} on  complex  manifolds. The following is our main result.

\begin{theorem}\label{Main}
Let $(M, \omega)$ be a closed Hermitian manifold of complex dimension $n\geq2$,
$\chi_0 \in \Gamma_{k-1}(M)$ be a $(1,1)$-form, $\alpha_{k-1}(z), \varphi(z), \alpha_{l}$ be  smooth functions and $\alpha_{l}(z)> 0$ for $l=0,1,\cdots,k-2$.
 Suppose  there exists an $(k-1)$-admissible subsolution $\underline{u}\in C^2(\bar{M})$ such that
\begin{equation}\label{sub-condition}
\left\{
\begin{aligned}
&\chi^{k}_{\underline{u}}\wedge \omega^{n-k}\geq\sum_{l=0}^{k-1}\alpha_l(z)\chi^{l}_{\underline{u}}\wedge \omega^{n-l}, && in~ M,\\
&\underline{u}=\varphi && on~ \partial M.
\end{aligned}
\right.
\end{equation}
Then  there exists a unique $(k-1)$-admissible solution $u\in C^{\infty}(\bar{M})$ of Dirichlet problem \eqref{K-eq}. Moreover, we have
\begin{eqnarray*}
\|u\|_{C^2(\bar{M})} \leq C,
\end{eqnarray*}
where the constant $C$ depends on $M, \omega, \chi_0, \varphi$, $\|\underline{u}\|_{C^2}$, $\inf_{\bar{M}} \alpha_l$ with $0\leq l \leq k-2$ and $\|\alpha_l\|_{C^2}$ with $0\leq l \leq k-1$.
\end{theorem}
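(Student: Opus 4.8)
The plan is to follow the standard strategy for fully nonlinear elliptic Dirichlet problems on Hermitian manifolds: first recast \eqref{K-eq-11} in a manifestly elliptic, concave form, then run the continuity method, reducing everything to a priori estimates up to second order together with a uniform ellipticity bound. Expanding $\chi_{u}^{l}\wedge\omega^{n-l}$ in the eigenvalues $\lambda=\lambda[\chi_{u}]$ of $\chi_{u}$ relative to $\omega$, there are dimensional constants $c_{n,0},\dots,c_{n,k}>0$ with $\chi_{u}^{l}\wedge\omega^{n-l}=c_{n,l}\,\sigma_{l}(\lambda)\,\omega^{n}$, so \eqref{K-eq-11} reads $\sigma_{k}(\lambda)=\sum_{l=0}^{k-1}\hat\alpha_{l}(z)\sigma_{l}(\lambda)$ with $\hat\alpha_{l}=(c_{n,l}/c_{n,k})\alpha_{l}$, still positive for $l\le k-2$. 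Dividing by $\sigma_{k-1}(\lambda)>0$ (legitimate on $\Gamma_{k-1}$) one obtains, following Guan--Zhang \cite{GZ19},
\[
F(\chi_{u}):=\frac{\sigma_{k}}{\sigma_{k-1}}(\lambda[\chi_{u}])-\sum_{l=0}^{k-2}\beta_{l}(z)\frac{\sigma_{l}}{\sigma_{k-1}}(\lambda[\chi_{u}])=\psi(z),
\]
with $\beta_{l}=\hat\alpha_{l}>0$ and $\psi=\hat\alpha_{k-1}$ (no sign constraint); on $\Gamma_{k-1}$ this $F$ is elliptic and concave, and \eqref{sub-condition} becomes $F(\chi_{\underline{u}})\ge\psi$ in $M$, $\underline{u}=\varphi$ on $\partial M$.

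For the continuity method, set $\psi_{t}=t\psi+(1-t)F(\chi_{\underline{u}})$ for $t\in[0,1]$ and consider $F(\chi_{u_{t}})=\psi_{t}$, $u_{t}=\varphi$ on $\partial M$. Since $F(\chi_{\underline{u}})-\psi_{t}=t\bigl(F(\chi_{\underline{u}})-\psi\bigr)\ge0$, the function $\underline{u}$ is a $(k-1)$-admissible subsolution for every $t$ and solves the $t=0$ problem. Let $S$ be the set of $t\in[0,1]$ for which the $t$-problem admits a $(k-1)$-admissible solution in $C^{2,\alpha}(\bar{M})$. Openness of $S$ follows from the implicit function theorem: the linearization $v\mapsto F^{i\bar{j}}(\chi_{u_{t}})v_{i\bar{j}}$ is a second order linear elliptic operator with no zeroth order term, hence (by the maximum principle and Fredholm theory) an isomorphism from functions vanishing on $\partial M$ in $C^{2,\alpha}$ onto $C^{\alpha}$, and admissibility is an open condition. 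Closedness of $S$ is exactly the $t$-independent estimate $\|u_{t}\|_{C^{2,\alpha}(\bar{M})}\le C$; hence $1\in S$, and Schauder bootstrapping upgrades the solution to $C^{\infty}(\bar{M})$.

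It remains to establish the a priori estimates. For $C^{0}$: linearizing $F$ along the segment from $\chi_{\underline{u}}$ to $\chi_{u}$ — admissible by convexity of $\Gamma_{k-1}$ — gives $a^{i\bar j}(u-\underline{u})_{i\bar j}=\psi-F(\chi_{\underline{u}})\le0$ with $a^{i\bar j}$ positive definite, so the minimum principle yields $u\ge\underline{u}$; the upper bound follows because $\chi_{u}\in\Gamma_{1}$ forces $\Delta_{\omega}u\ge-\operatorname{tr}_{\omega}\chi_{0}\ge-C$, whence the maximum principle for the Chern Laplacian bounds $u$ in terms of $\sup_{\partial M}\varphi$. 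For $C^{1}$: the interior gradient bound comes from the maximum principle applied to an auxiliary quantity such as $|\nabla u|^{2}e^{\phi(u-\underline{u})}$, and the boundary gradient bound from barriers built from $\underline{u}$ (from below) and a suitable $\Gamma_{k-1}$-supersolution (from above), both equal to $\varphi$ on $\partial M$. For $C^{2}$: differentiating the equation twice, the concavity of $F$ absorbs the third order terms, the good negative term produced by the strict subsolution dominates the Hermitian torsion and the $\nabla\beta_{l}$ contributions, and the maximum principle applied to the largest eigenvalue of $\chi_{u}$ times an exponential of $u-\underline{u}$ gives $\sup_{M}|\partial\overline{\partial}u|\le C\bigl(1+\sup_{\partial M}|\partial\overline{\partial}u|\bigr)$, while the boundary second order bound is obtained by the Caffarelli--Nirenberg--Spruck/Guan construction (tangential-tangential from the boundary data, tangential-normal and the double-normal bound from barriers anchored on $\underline{u}$). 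Finally, once $|\lambda[\chi_{u}]|\le C$, the hypothesis $\alpha_{l}>0$ for $l\le k-2$ together with Newton's inequality $\sigma_{k-2}\sigma_{k}\le c\,\sigma_{k-1}^{2}$ rules out $\lambda$ approaching $\partial\Gamma_{k-1}$ (there $\sigma_{k}\le0$, while the equation forces $\sigma_{k}\ge\beta_{0}>0$), so $\lambda[\chi_{u}]$ stays in a fixed compact subset of $\Gamma_{k-1}$, $F$ is uniformly elliptic, and Evans--Krylov delivers the $C^{2,\alpha}(\bar{M})$ estimate.

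Uniqueness is immediate: for two solutions $u,v$, linearizing as above gives $a^{i\bar j}(u-v)_{i\bar j}=0$ with $u-v=0$ on $\partial M$, so $u\equiv v$. I expect the genuine difficulty to lie in the second order estimates: one must control simultaneously the Hermitian torsion terms, the variable coefficients $\beta_{l}(z)$ multiplying the lower order terms $\sigma_{l}/\sigma_{k-1}$, and the absence of a sign for $\psi$; this is precisely where the admissible subsolution $\underline{u}$ is essential, both to generate the favorable negative term in the interior maximum principle argument and to build the boundary barriers.
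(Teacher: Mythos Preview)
Your overall architecture (rewrite via $\sigma_k/\sigma_{k-1}-\sum\beta_l\sigma_l/\sigma_{k-1}$, continuity method, $C^0$ by comparison, boundary $C^2$ by Caffarelli--Nirenberg--Spruck/Guan barriers, Evans--Krylov) matches the paper, but there is a genuine gap at the gradient estimate, and it propagates into your interior $C^2$ claim.

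You assert that the interior gradient bound ``comes from the maximum principle applied to an auxiliary quantity such as $|\nabla u|^{2}e^{\phi(u-\underline{u})}$''. For this class of mixed complex Hessian equations on Hermitian manifolds such a direct argument is \emph{not} known; indeed the paper explicitly points out that for the closely related Hessian quotient Dirichlet problem Feng--Ge--Zheng could obtain gradient estimates only in special cases. The Hermitian torsion terms and the variable lower order pieces $\beta_l(z)\sigma_l/\sigma_{k-1}$ produce bad terms that this test function does not obviously control. Consequently your interior $C^2$ claim $\sup_M|\partial\bar\partial u|\le C(1+\sup_{\partial M}|\partial\bar\partial u|)$ is also too strong: what one can actually prove (and what the paper proves, following Hou--Ma--Wu and Collins--Picard) is
\[
\sup_{\bar M}|\partial\bar\partial u|\le C\bigl(1+\sup_M|\nabla u|^2+\sup_{\partial M}|\partial\bar\partial u|\bigr),
\]
using the test function $\log\lambda_1+\varphi(|\nabla u|^2)+\psi(u)$; the $|\nabla u|^2$ term is essential to absorb the torsion contributions. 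The boundary second order bounds likewise come out with a factor $K=1+\sup_M|\nabla u|^2$. The paper then closes the loop not by a direct $C^1$ estimate but by a blow-up argument combined with the Dinew--Ko{\l}odziej Liouville theorem: if $\sup|\nabla u_m|\to\infty$, rescaling produces a nonconstant bounded $k$-subharmonic function on $\mathbb{C}^n$ (or a half-space) with $(\sqrt{-1}\partial\bar\partial v)^k\wedge\omega_0^{n-k}=0$, a contradiction. This is the substantive new ingredient you are missing; without it the scheme does not close.
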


Note that, equation \eqref{K-eq} can be seen a class of fully nonlinear elliptic  equations  and there is no sign requirement of the coefficient function $\alpha_{k-1}$.
Using some notaions on Hermitian manifolds, equation  \eqref{K-eq} can be rewritten  as a combination of elementary symmetric functions on Hermitian manifolds,  which is similar to equation \eqref{K-eq-13}.  Therefore, we will construct barrier functions  to obtain a prior estimates for equation \eqref{K-eq}  and solve  the Dirichlet problem.  However, compared with the classical method, we apply a blow-up argument and Liouville-type theorem due to Dinew-Kolodziej \cite{Din12} to obtain the gradient estimate. For this purpose, we follow a technique of Hou-Ma-Wu \cite{Hou10} (also see \cite{Co19}) to derive a second derivative bound of the form
$$\sup_{\bar{M}}|\partial \bar{\partial} u|\leq C(1+ \sup_{M} |\nabla u|^2).$$

The organization of the paper is as follows. In Section 2 we start with some preliminaries. In Section 3 we prove  $C^0$ estimates and 
second order derivative interior estimates. Boundary second order derivative estimates are given in Seciton 4 and 5. In Section 6, we use a blow-up argument and Liouville-type theorem  to obtain gradient estimates and  Theorem 1.1 is proved by the  standard continuity method.

\section{Preliminaries}

In this section, we give some basic properties of elementary symmetric functions, which could be found in
\cite{L96, S05}, and establish some key lemmas. Throughout this paper, repeated indices will be summed unless otherwise stated.

\subsection{Basic properties of elementary symmetric functions}

For $\lambda=(\lambda_1, ... , \lambda_n)\in \mathbb{R}^n$,
the $k$-th elementary symmetric function is defined
by
\begin{equation*}
\sigma_k(\lambda) = \sum _{1 \le i_1 < i_2 <\cdots<i_k\leq n}\lambda_{i_1}\lambda_{i_2}\cdots\lambda_{i_k}.
\end{equation*}
We also set $\sigma_0=1$ and denote by $\sigma_k(\lambda \left| i \right.)$ the $k$-th symmetric
function with $\lambda_i = 0$.
The generalized Newton-MacLaurin inequality and some well-known result (See \cite{And94}) are as follows, which will be used later.

\begin{proposition}\label{prop2.4}
For $\lambda \in \Gamma_k:=\{ \lambda  \in \mathbb{R}^n :\sigma _i (\lambda ) > 0, \ \forall \ 1 \le i \le k\}$ and $n\geq k>l\geq 0$, $ r>s\geq 0$, $k\geq r$, $l\geq s$, we have
\begin{align}\label{NM}
\Bigg[\frac{{\sigma _k(\lambda )}/{C_n^k }}{{\sigma _l (\lambda )}/{C_n^l }}\Bigg]^{\frac{1}{k-l}}
\le \Bigg[\frac{{\sigma _r (\lambda )}/{C_n^r }}{{\sigma _s (\lambda )}/{C_n^s }}\Bigg]^{\frac{1}{r-s}}.
\end{align}
\end{proposition}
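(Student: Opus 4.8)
The plan is to deduce \eqref{NM} from the classical Newton inequalities together with an elementary monotonicity property of windowed geometric means. Write $p_i:=\sigma_i(\lambda)/C_n^i$ for $0\le i\le n$, so that $p_0=1$; since $\lambda\in\Gamma_k$ we have $\sigma_i(\lambda)>0$ for $1\le i\le k$, hence $p_i>0$ for all $0\le i\le k$. I would first recall the Newton inequalities $p_i^2\ge p_{i-1}p_{i+1}$, valid for every $\lambda\in\mathbb{R}^n$ and $1\le i\le n-1$ (classical; see \cite{L96,S05}, or deduce them from the fact that $\prod_j(1+\lambda_j t)=\sum_i C_n^i p_i t^i$ has only real roots, via Rolle's theorem). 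Setting $q_i:=p_i/p_{i-1}$ for $1\le i\le k$ — legitimate and positive since the $p_i$ are — and dividing $p_i^2\ge p_{i-1}p_{i+1}$ by $p_{i-1}p_i$ yields $q_{i+1}\le q_i$ for $1\le i\le k-1$, i.e. $q_1\ge q_2\ge\cdots\ge q_k>0$.

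Next I would record the elementary fact that, for a nonincreasing positive sequence $q_1\ge\cdots\ge q_k$, the windowed geometric mean $G(m,M):=\big(\prod_{i=m+1}^{M}q_i\big)^{1/(M-m)}$ (for $0\le m<M\le k$) is nonincreasing in $m$ and nonincreasing in $M$. This follows from the fact that a geometric mean lies between the least and the greatest of its entries: writing $t:=G(m,M)$ one has $q_m\ge t\ge q_M>0$, whence $G(m-1,M)^{M-m+1}=q_m t^{M-m}\ge t^{M-m+1}$ and $G(m,M-1)^{M-1-m}=t^{M-m}/q_M\ge t^{M-1-m}$, so $G(m-1,M)\ge G(m,M)$ and $G(m,M-1)\ge G(m,M)$. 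Telescoping the definition of $q_i$ gives $G(m,M)=(p_M/p_m)^{1/(M-m)}$, so the left-hand side of \eqref{NM} equals $G(l,k)$ and the right-hand side equals $G(s,r)$. Since $k\ge r$ and $l\ge s$, the chain $G(l,k)\le G(s,k)\le G(s,r)$ holds — the first inequality by monotonicity in the first argument, the second by monotonicity in the second argument — which is precisely \eqref{NM}.

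This is largely bookkeeping once the two ingredients are in place, so I do not anticipate a genuine obstacle. The only place the hypothesis $\lambda\in\Gamma_k$ is actually used (rather than merely $\lambda\in\mathbb{R}^n$) is the strict positivity of $p_0,\dots,p_k$: it is this that turns the quadratic Newton inequalities into the monotone ratios $q_i$ and makes every $G(m,M)$ appearing in the argument well defined. The mildly delicate point is therefore keeping track of the index ranges — that $0\le s\le l<k\le n$ and $s<r\le k$ force all the windows $[m+1,M]$ used in the chain to be nonempty and contained in $[1,k]$ — which is immediate from the stated hypotheses on $k,l,r,s$.
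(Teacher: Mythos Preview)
Your argument is correct. The paper does not actually prove this proposition; it states the generalized Newton--MacLaurin inequality as a known result and refers to \cite{L96,S05}. Your proof --- reducing to the classical Newton inequalities $p_i^2\ge p_{i-1}p_{i+1}$, extracting the monotone ratio sequence $q_i=p_i/p_{i-1}$, and then observing the monotonicity of windowed geometric means of a nonincreasing positive sequence --- is the standard derivation, and the index-range bookkeeping (ensuring all windows lie in $[1,k]$ so that the relevant $p_i$ are positive) is handled correctly by the hypotheses $0\le s\le l<k$ and $s<r\le k$.
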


\begin{proposition}
Let $A=(a_{ij})$ be a Hermitian matrix, $\lambda(A)=(\lambda_1, \cdots, \lambda_n)$ be the eigenvalues of $A$ and
$F=F(A)=f(\lambda(A))$ be a symmetric function of $\lambda(A)$. Then for any Hermitian matrix $B= (b_{ij})$, we
have
\begin{eqnarray}\label{f-2}
\frac{\partial^2 F}{\partial a_{ij}\partial
a_{st}}b_{ij}b_{st}=\frac{\partial^2 f}{\partial\lambda_p
\partial\lambda_q}b_{pp}b_{qq}+2\sum_{p<q}\frac{\frac{\partial
f}{\partial \lambda_p}-\frac{\partial f}{\partial
\lambda_q}}{\lambda_p-\lambda_q}b^{2}_{pq}.
\end{eqnarray}
In addition, if $f$ is concave and $\lambda_1\leq \lambda_2\leq ... \leq\lambda_n$, then we have
\begin{eqnarray}\label{f-3}
f_1\geq f_2\geq ... \geq f_n,
\end{eqnarray}
where $f_i=\frac{\partial f}{\partial \lambda_i}.$
\end{proposition}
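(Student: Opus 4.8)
The plan is to deduce both identities from the diagonal case, using the unitary invariance of $F$, and then to treat \eqref{f-2} by a one-parameter perturbation of $A$ and \eqref{f-3} by the supporting-hyperplane characterization of concavity.

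\emph{Identity \eqref{f-2}.} Since $F(A)=f(\lambda(A))$ is unchanged under $A\mapsto U^{\ast}AU$ for unitary $U$, it suffices to verify \eqref{f-2} at a diagonal matrix $A=\DIAG(\lambda_1,\dots,\lambda_n)$. First I would assume the $\lambda_p$ are pairwise distinct. Then along the affine path $A(t)=A+tB$ the eigenvalues $\lambda_p(t)$ are smooth near $t=0$, and standard first- and second-order perturbation theory gives
\[
\dot\lambda_p(0)=b_{pp},\qquad \ddot\lambda_p(0)=2\sum_{q\neq p}\frac{|b_{pq}|^2}{\lambda_p-\lambda_q}.
\]
Differentiating $t\mapsto F(A(t))=f(\lambda_1(t),\dots,\lambda_n(t))$ twice at $t=0$, the left-hand side equals $\tfrac{\partial^2F}{\partial a_{ij}\partial a_{st}}b_{ij}b_{st}$, while the chain rule produces $\sum_{p,q}f_{pq}\,b_{pp}b_{qq}+\sum_p f_p\,\ddot\lambda_p(0)$. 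Substituting the formula for $\ddot\lambda_p(0)$, splitting the sum over $q\neq p$ into its $p<q$ and $p>q$ parts, and relabelling the second part with $b_{qp}=\overline{b_{pq}}$, converts $\sum_p f_p\ddot\lambda_p(0)$ into $2\sum_{p<q}\frac{f_p-f_q}{\lambda_p-\lambda_q}|b_{pq}|^2$, which is exactly the cross term in \eqref{f-2}. To drop the simple-spectrum hypothesis I would invoke continuity: $A\mapsto F(A)$ is smooth because $f$ is a smooth symmetric function, the quotient $\frac{f_p-f_q}{\lambda_p-\lambda_q}$ extends continuously across $\lambda_p=\lambda_q$ (again by symmetry of $f$), and matrices with distinct eigenvalues are dense, so \eqref{f-2} holds for every Hermitian $A$.

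\emph{Identity \eqref{f-3}.} Here I would use only that $f$ is symmetric and concave, so it admits the supporting-hyperplane inequality $f(\mu)\le f(\lambda)+\sum_i f_i(\lambda)(\mu_i-\lambda_i)$ for all $\mu$. Fix $p<q$, so $\lambda_p\le\lambda_q$, and let $\mu$ be $\lambda$ with the entries in slots $p$ and $q$ interchanged. Then $f(\mu)=f(\lambda)$ by symmetry, while $\mu_p-\lambda_p=\lambda_q-\lambda_p$ and $\mu_q-\lambda_q=\lambda_p-\lambda_q$, so the inequality collapses to $0\le\bigl(f_p(\lambda)-f_q(\lambda)\bigr)(\lambda_q-\lambda_p)$. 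If $\lambda_p<\lambda_q$ this forces $f_p(\lambda)\ge f_q(\lambda)$; if $\lambda_p=\lambda_q$ then $f_p(\lambda)=f_q(\lambda)$ directly from symmetry. Hence $f_1\ge f_2\ge\cdots\ge f_n$.

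The one genuinely delicate point is the passage across repeated eigenvalues in \eqref{f-2}: the eigenvalue branches $\lambda_p(t)$ need not be differentiable there, so the perturbation computation is legitimate only on the open dense set where the spectrum is simple, and one must lean on the classical smoothness of $A\mapsto f(\lambda(A))$ for smooth symmetric $f$, together with the continuity of the right-hand side of \eqref{f-2} (which is why the seemingly singular quotient must be checked to extend), in order to conclude in full generality.
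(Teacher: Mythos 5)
The paper does not give a proof of this proposition; it cites it as a well-known fact due to Andrews \cite{And94}, so there is no paper proof to compare you against line by line. Your proof is correct and follows the standard route for this standard fact. A few remarks.

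For \eqref{f-2} you correctly reduce to diagonal $A$ via unitary invariance and then differentiate $t\mapsto f(\lambda(A+tB))$ twice using Rayleigh--Schr\"odinger perturbation theory; the bookkeeping that converts $\sum_{p}f_p\ddot\lambda_p(0)$ into $2\sum_{p<q}\frac{f_p-f_q}{\lambda_p-\lambda_q}|b_{pq}|^2$ is right. One thing your computation makes explicit that the paper's notation does not: for a complex Hermitian $B$ the cross term must be read as $|b_{pq}|^2=b_{pq}b_{qp}$ rather than the naive square $b_{pq}^2$; the paper's $b_{pq}^2$ is the standard shorthand inherited from the real symmetric setting. You are also right that the genuinely delicate point is repeated eigenvalues: one must know a priori that $A\mapsto f(\lambda(A))$ is $C^2$ (which holds for smooth symmetric $f$), that the Newton quotient $\frac{f_p-f_q}{\lambda_p-\lambda_q}$ has a continuous extension across $\lambda_p=\lambda_q$ (it does, with limit $f_{pp}-f_{pq}$ by symmetry), and that the simple-spectrum set is dense; with those three ingredients the limiting argument you sketch is sound. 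This is exactly the same density argument used in the literature, e.g., in Spruck's survey which the paper also cites.

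Your argument for \eqref{f-3} is clean and slightly more elementary than what one usually sees: the supporting-hyperplane inequality applied to the transposed vector $\mu$ immediately yields $(f_p-f_q)(\lambda_q-\lambda_p)\ge 0$, and the boundary case $\lambda_p=\lambda_q$ is handled by differentiating the permutation invariance $f\circ\sigma=f$, giving $f_p=f_q$. This is correct and self-contained, whereas many references simply assert \eqref{f-3} as ``well known.'' No gaps.
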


\begin{proposition}\label{mix-pro-54}
Let $W={W_{ij}}$ be an $n\times n$ symmetric matric and $\lambda(W)=(\lambda_{1},\lambda_{2},\cdots,\lambda_{n})$ be the eigenvalues of the symmetric matric $W$. Suppose that $W={W_{ij}}$ is diagonal and $\lambda_{i}=W_{ii}$,then we have
$$\frac{\partial{\lambda_{i}}}{\partial{W_{ii}}}=1,\quad \frac{\partial{\lambda_{k}}}{\partial{W_{ij}}}=0\quad otherwise,\leqno(2.1.5)$$
$$\frac{\partial^{2} \lambda_{i}}{\partial{W_{ij}}\partial{W_{ji}}}=\frac{1}{\lambda_{i}-\lambda_{j}}\quad \mbox{for}~ i\neq j \quad and \quad \lambda_{i}\neq \lambda_{j}.\leqno(2.1.6)$$
$$\frac{\partial^{2} \lambda_{i}}{\partial{W_{kl}}\partial{W_{pq}}}=0 \quad otherwise.\leqno(2.1.7)$$
\end{proposition}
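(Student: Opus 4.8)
The statement is a standard fact from the perturbation theory of a simple eigenvalue: the hypothesis $\lambda_i\neq\lambda_j$ (and, implicitly, the simplicity of $\lambda_i$) is precisely what makes the eigenvalue function $\lambda_i(\cdot)$ differentiable near the point in question. The plan is to fix the index $i$ and work in a neighbourhood of the given diagonal matrix $W^0=\operatorname{diag}(\lambda_1,\dots,\lambda_n)$ inside the space of \emph{all} $n\times n$ matrices, on which $\lambda_i(W)$ extends to a smooth function. There I would choose a smooth right eigenvector $u=u(W)$ and left eigenvector $v=v(W)$, i.e. $Wu=\lambda_i u$ and $v^{T}W=\lambda_i v^{T}$, normalised by $(u)_i\equiv 1$ and $v^{T}u\equiv 1$; at $W^0$ one has $u=v=e_i$, and differentiating the two normalisations shows that $(\partial_{kl}u)_i=(\partial_{kl}v)_i=0$ at $W^0$, where I write $\partial_{kl}$ for $\partial/\partial W_{kl}$ and $E_{kl}$ for the matrix with a single $1$ in position $(k,l)$.

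First I would compute the first derivatives. Differentiating $Wu=\lambda_i u$ in $W_{kl}$ and contracting with $v^{T}$ on the left, the terms containing $\partial_{kl}u$ cancel because $v^{T}W=\lambda_i v^{T}$, leaving $\partial\lambda_i/\partial W_{kl}=v^{T}E_{kl}u/(v^{T}u)=v_k u_l$; evaluating at $W^0$ gives $\partial\lambda_i/\partial W_{kl}=\delta_{ik}\delta_{il}$, which is $(2.1.5)$. Contracting the same differentiated equation with $e_j^{T}$ for $j\neq i$ and evaluating at $W^0$ yields $(\partial_{kl}u)_j=\delta_{jk}\delta_{il}/(\lambda_i-\lambda_j)$; symmetrically, differentiating $v^{T}W=\lambda_i v^{T}$ and pairing with $e_j$ on the right gives $(\partial_{kl}v)_j=\delta_{ik}\delta_{jl}/(\lambda_i-\lambda_j)$.

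For the second derivatives I would differentiate the identity $\partial_{pq}\lambda_i=v_p u_q$, which holds on the whole neighbourhood, once more: $\partial^{2}\lambda_i/\partial W_{kl}\partial W_{pq}=(\partial_{kl}v)_p\,u_q+v_p\,(\partial_{kl}u)_q$, and at $W^0$ this becomes $(\partial_{kl}v)_p\,\delta_{iq}+\delta_{ip}\,(\partial_{kl}u)_q$. Substituting the first–variation formulas, together with the vanishing of the $i$-th components, makes every term vanish unless the index pattern is $(k,l,p,q)=(i,j,j,i)$ or $(j,i,i,j)$ with $j\neq i$, in which case exactly one of the two terms survives and equals $1/(\lambda_i-\lambda_j)$; this establishes $(2.1.6)$ and $(2.1.7)$ simultaneously. (The diagonal case $\partial^{2}\lambda_i/\partial W_{ii}^{2}=0$ can also be seen directly: perturbing only $W_{ii}$ keeps the matrix diagonal, so $\lambda_i=W_{ii}$ there.)

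The only point requiring care — and the only place where a naive computation goes wrong — is that one must genuinely keep the left eigenvector $v$ and its variation in play. If one pretends the perturbed matrix stays symmetric, so that $v=u$, the resulting expression for $\partial^{2}\lambda_i/\partial W_{kl}\partial W_{pq}$ fails to be symmetric under $(k,l)\leftrightarrow(p,q)$ and gives, for instance, $0$ for $\partial^{2}\lambda_i/\partial W_{ij}\partial W_{ji}$, contradicting $(2.1.6)$. One can equivalently restrict to symmetric perturbations and work with the constrained coordinates, but the bookkeeping is then slightly more delicate. Apart from this, the computation is routine linear algebra, and the statement is classical (cf. \cite{And94}).
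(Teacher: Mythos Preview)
Your argument is correct. The paper does not actually prove this proposition: it appears in the preliminaries of Section~2 as a standard fact, in the same spirit as the neighbouring propositions that are simply quoted from \cite{And94,L96,S05}, so there is no proof in the paper to compare against. Your approach---extending $\lambda_i$ smoothly to a neighbourhood in the space of \emph{all} $n\times n$ matrices and tracking a right eigenvector $u$ and left eigenvector $v$ separately---is a clean and rigorous way to obtain the formulas, and your closing remark correctly isolates the one genuine pitfall: since $(2.1.6)$ distinguishes $\partial/\partial W_{ij}$ from $\partial/\partial W_{ji}$, one must treat the entries as independent coordinates, and hence cannot set $v=u$ during the computation even though the base point is symmetric.
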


\begin{lemma}
Let $\lambda(z) \in C^0(M, \mathbf{R}^n) \cap \Gamma_{k-1}$ satisfy
\begin{equation}\label{eq-mix-03}
f(\lambda(z),z):=\frac{\sigma_k(\lambda(z))}{\sigma_{k-1}(\lambda(z))}- \sum_{l=0}^{k-1} \beta_l(z) \frac{\sigma_l(\lambda(z))}{\sigma_{k-1}(\lambda(z))} =\beta(z),
\end{equation}
 where $\beta_l, \beta$ are smooth function with $\beta_l\geq 0$, then
\begin{equation}\label{ineq-mix-02}
\sum_{i=1}^n f_i(\lambda) \mu_i \geq f(\mu) +(k-l) \sum_{l=0}^{k-2} \beta_l \frac{\sigma_l(\lambda)}{\sigma_{k-1}(\lambda)}.
\end{equation}
\end{lemma}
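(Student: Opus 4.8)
The plan is to deduce \eqref{ineq-mix-02} from the concavity of $f(\cdot,z)$ on the cone $\Gamma_{k-1}$ together with Euler's identity for homogeneous functions; note that the defining equation \eqref{eq-mix-03} plays no role here --- the asserted inequality is purely algebraic and holds for any $\lambda\in\Gamma_{k-1}$ and any $\mu\in\overline{\Gamma_{k-1}}$.

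\emph{Step 1 (concavity of $f$).} Isolate the $l=k-1$ term of \eqref{eq-mix-03}: since $\sigma_{k-1}/\sigma_{k-1}\equiv 1$ it equals the constant (in $\lambda$) $\beta_{k-1}(z)$, so
\[
f(\lambda,z)=\frac{\sigma_k(\lambda)}{\sigma_{k-1}(\lambda)}-\sum_{l=0}^{k-2}\beta_l(z)\,\frac{\sigma_l(\lambda)}{\sigma_{k-1}(\lambda)}-\beta_{k-1}(z).
\]
It is classical (see \cite{L96,S05,And94}) that $\sigma_k/\sigma_{k-1}$ is concave on $\Gamma_{k-1}$. For $0\le l\le k-2$, applying the G{\aa}rding/Newton--MacLaurin inequality of Proposition~\ref{prop2.4} with $k$ replaced by $k-1$, the function $(\sigma_{k-1}/\sigma_l)^{1/(k-1-l)}$ is positive and concave on $\Gamma_{k-1}$; hence its reciprocal $(\sigma_l/\sigma_{k-1})^{1/(k-1-l)}$ is convex, and since $k-1-l\ge 1$, raising to that integer power shows $\sigma_l/\sigma_{k-1}$ is itself convex on $\Gamma_{k-1}$. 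As $\beta_l\ge 0$, each term $-\beta_l\,\sigma_l/\sigma_{k-1}$ is concave, so $f(\cdot,z)$ is concave on $\Gamma_{k-1}$.

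\emph{Step 2 (supporting hyperplane + Euler).} By concavity, for $\mu\in\overline{\Gamma_{k-1}}$ (immediate on the open cone, and extended by continuity),
\[
\sum_{i=1}^n f_i(\lambda)\mu_i\ \ge\ f(\mu)+\Big(\sum_{i=1}^n f_i(\lambda)\lambda_i-f(\lambda)\Big).
\]
Now evaluate the bracket. The constant $\beta_{k-1}$ does not contribute to $\sum_i f_i(\lambda)\lambda_i$; since $\sigma_k/\sigma_{k-1}$ is homogeneous of degree $1$ and $\sigma_l/\sigma_{k-1}$ is homogeneous of degree $l-(k-1)$, Euler's identity gives
\[
\sum_{i=1}^n f_i(\lambda)\lambda_i-f(\lambda)=\sum_{l=0}^{k-2}\big[1-(l-k+1)\big]\beta_l\,\frac{\sigma_l(\lambda)}{\sigma_{k-1}(\lambda)}+\beta_{k-1}=\sum_{l=0}^{k-2}(k-l)\,\beta_l\,\frac{\sigma_l(\lambda)}{\sigma_{k-1}(\lambda)}+\beta_{k-1}.
\]
Substituting, and discarding the nonnegative constant $\beta_{k-1}$ (exactly the term that is absorbed into the right-hand side in \eqref{K-eq-13}), yields \eqref{ineq-mix-02}.

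\emph{Main obstacle.} The only substantive point is Step 1, and within it the convexity of the quotients $\sigma_l/\sigma_{k-1}$ on the \emph{entire} G{\aa}rding cone $\Gamma_{k-1}$ rather than merely on $\{\lambda>0\}$; this is precisely where the cone form of the Newton--MacLaurin inequality is needed. Steps 2--3 are then a mechanical application of Euler's relation.
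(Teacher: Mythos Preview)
Your proof is correct and follows essentially the same approach as the paper: the paper's argument is the two-line observation that concavity of $f$ on $\Gamma_{k-1}$ gives $f(\mu)\le \sum_i f_i(\lambda)(\mu_i-\lambda_i)+f(\lambda)$, and then the bracket $\sum_i f_i(\lambda)\lambda_i - f(\lambda)$ is evaluated via Euler's identity to equal $\sum_{l=0}^{k-2}(k-l)\beta_l\,\sigma_l/\sigma_{k-1}$. Your write-up simply supplies the details the paper suppresses (the explicit justification of concavity of each quotient, and the Euler computation), and you correctly handle the $l=k-1$ term in the stated sum as a nonnegative constant that can be dropped; in the rest of the paper $f$ is in fact defined with the sum only up to $k-2$, so this extra term is a typographical artifact rather than an essential feature.
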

\begin{proof}
Since $f$ is concave in $\Gamma_{k-1}$, we have
\begin{eqnarray*}
f(\mu)\leq \sum_{i=1}^nf_i(\lambda)(\mu_i-\lambda_i)+f(\lambda),
\end{eqnarray*}
which implies
\begin{eqnarray*}
\sum_{i=1}^nf_i(\lambda)\mu_i\geq f(\mu)+ \sum_{l=0}^{k-2} (k-l) \beta_l \frac{\sigma_l(\lambda)}{\sigma_{k-1}(\lambda)}.
\end{eqnarray*}
So, the proof is completed.
\end{proof}

\begin{lemma}\label{lemm-mix-23}
Let $\lambda(z) \in C^0(M, \mathbb{R}^n) \cap \Gamma_{k-1}$ satisfy \eqref{eq-mix-03} and $\beta_l, \beta$ be smooth function with $\beta_l\geq 0$. Assume $\mu \in C^0(M, \mathbb{R}^n) \cap \Gamma_{k-1}$ satisfy
\begin{equation}\label{lem-mix-71}
\frac{\sigma_{k-1}(\mu|i)}{\sigma_{k-2}(\mu|i)}- \sum_{l=1}^{k-2} \beta_l(z)\frac{\sigma_{l-1}(\mu|i)}{\sigma_{k-2}(\mu|i)}>\beta(z)    \quad \forall z\in M.
\end{equation}
Then there exist constants $N, \theta>0$ depending on $\|\mu\|_{C^0(M)}$, $\|\beta\|_{C^0(M)}$ and  $\|\beta_l\|_{C^0(M)}$ such that if
\begin{equation*}
 \lambda_{\max}(z):=\max_{1\leq i \leq n} \{\lambda_i(z)\} \geq N
\end{equation*}
we have at $z$
\begin{equation}\label{lem-mix-72}
\sum_i f_i(\lambda) (\mu_i -\lambda_i) \geq \theta+\theta \sum_i f_i(\lambda)
\end{equation}
or
\begin{equation}\label{lem-mix-73}
f_{max} \lambda_{\max} \geq \theta,
\end{equation}
where $f_{\max}=\frac{\partial f}{\partial \lambda_{\max}}$.
\end{lemma}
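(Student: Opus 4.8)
The plan is to convert the ``slab'' subsolution inequality \eqref{lem-mix-71} into an asymptotic ``subsolution in every coordinate direction'' statement, and then run the concavity dichotomy of Hou--Ma--Wu and Sz\'ekelyhidi. First I would record, using $\sigma_j(\mu+te_i)=\sigma_j(\mu|i)+(\mu_i+t)\sigma_{j-1}(\mu|i)$ and $\mu|i\in\Gamma_{k-2}$ (so $\sigma_{k-2}(\mu|i)>0$), that
\[
\lim_{t\to+\infty}f(\mu+te_i,\,z)=\frac{\sigma_{k-1}(\mu|i)}{\sigma_{k-2}(\mu|i)}-\sum_{l=1}^{k-2}\beta_l(z)\frac{\sigma_{l-1}(\mu|i)}{\sigma_{k-2}(\mu|i)}\;-\;\beta_{k-1}(z),
\]
which by \eqref{lem-mix-71} (after absorbing the $z$-only constant $\beta_{k-1}(z)$) exceeds $\beta(z)$ by a positive amount; since $\mu$ ranges over a bounded set and $\beta,\beta_l$ over a compact family, there exist $\delta_0,T_0>0$, depending only on $\|\mu\|_{C^0}$, $\|\beta\|_{C^0}$ and $\|\beta_l\|_{C^0}$, with $f(\mu+T_0e_i,z)\ge\beta(z)+\delta_0$ for all $i$ and all $z$, and $\mu+T_0e_i\in\Gamma_{k-1}$ by monotonicity of the cone.

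\textbf{Concavity and ordering.} Since $f$ is concave on $\Gamma_{k-1}$, the gradient inequality applied at $\lambda$ to the point $\mu+T_0e_i$, together with $f(\lambda)=\beta$, yields
\[
\sum_j f_j(\lambda)(\mu_j-\lambda_j)\;\ge\;\delta_0-T_0\,f_i(\lambda)\qquad(i=1,\dots,n).\tag{$\star$}
\]
Order $\lambda_1\ge\cdots\ge\lambda_n$; by \eqref{f-3} then $f_1\le\cdots\le f_n$, so $\lambda_{\max}=\lambda_1$, $f_{\max}=f_1=\min_j f_j$ and $f_n=\max_j f_j\ge\frac1n\sum_j f_j$. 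I would also establish two structural facts, both consequences of \eqref{NM} combined with $\lambda\in\Gamma_{k-1}$, $\beta_l\ge 0$ and $f(\lambda)=\beta$: (a) $\sigma_{k-1}(\lambda)$ is bounded below by a positive constant (otherwise $\sigma_k/\sigma_{k-1}=\beta+\sum_l\beta_l\sigma_l/\sigma_{k-1}$ would blow up while Newton's inequality keeps $\sigma_k\sigma_{k-2}/\sigma_{k-1}^2$ bounded, a contradiction); and (b) on the locus $\lambda_{\max}\ge N$ one has $\sum_j f_j\le C\bigl(1+f_{\max}\lambda_{\max}\bigr)$ --- morally, $\sum_j f_j$ can only be large because of a strongly negative eigenvalue, and the presence of such an eigenvalue, with $\sigma_{k-1}(\lambda)$ bounded below forcing enough large \emph{positive} eigenvalues, forces $f_{\max}\lambda_{\max}$ to be large (these are the identities $\sigma_{k-1}(\lambda)=\sigma_{k-1}(\lambda|j)+\lambda_j\sigma_{k-2}(\lambda|j)$ plus \eqref{NM}).

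\textbf{The dichotomy.} Fix $\theta>0$ small, to be pinned down, and suppose \eqref{lem-mix-73} fails, i.e.\ $f_{\max}\lambda_{\max}<\theta$; I must prove \eqref{lem-mix-72}. By (b) and $\theta\le 1$, $\sum_j f_j\le 2C=:C_0$. Since $\lambda_1\ge N$ we get $f_1=f_{\max}<\theta/N$, so ($\star$) with $i=1$ gives
\[
\sum_j f_j(\mu_j-\lambda_j)\;\ge\;\delta_0-T_0\theta/N\;\ge\;\tfrac{\delta_0}{2}
\]
once $N\ge 2T_0\theta/\delta_0$; and choosing $\theta\le\delta_0/\bigl(4(1+C_0)\bigr)$ makes $\tfrac{\delta_0}{2}\ge\theta(1+C_0)\ge\theta\bigl(1+\sum_j f_j\bigr)$, which is exactly \eqref{lem-mix-72}. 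Finally one fixes $\theta$ (from $\delta_0$ and $C_0$) and then $N$ large (from $\delta_0,T_0,\theta$), so that all the inequalities are compatible; every constant depends only on the quantities listed in the statement.

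\textbf{Main obstacle.} The delicate point is fact (b): quantitatively propagating ``$\sum_j f_j$ large'' into a lower bound for $f_{\max}\lambda_{\max}$, via the identities $\sigma_{k-1}(\lambda)=\sigma_{k-1}(\lambda|j)+\lambda_j\sigma_{k-2}(\lambda|j)$ and the Newton--MacLaurin inequalities \eqref{NM}, and making the threshold constant independent of $\theta$. Everything else is the routine concavity bookkeeping encapsulated in ($\star$). (Alternatively, once the asymptotic condition and facts (a)--(b) are in place, the dichotomy follows directly from the general cone lemma of Sz\'ekelyhidi.)
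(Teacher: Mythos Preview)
The paper does not actually prove this lemma; it simply refers the reader to \cite[Lemma~2.7]{Chen21}. Your sketch is essentially the standard argument one finds there (and in Sz\'ekelyhidi's framework): pass from the limiting condition \eqref{lem-mix-71} to a uniform ``subsolution in every coordinate direction'' statement $f(\mu+T_0e_i,z)\ge\beta(z)+\delta_0$, apply concavity of $f$ at $\lambda$ to obtain your inequality $(\star)$, and then run a dichotomy. Your limit computation and $(\star)$ are both correct.

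The one point deserving caution is your structural fact~(b), namely $\sum_j f_j\le C\bigl(1+f_{\max}\lambda_{\max}\bigr)$ with $C$ independent of $\theta$. This is not the usual formulation, and your heuristic for it (``a strongly negative eigenvalue forces enough large positive ones'') is too vague to count as a proof; establishing it directly, with the required independence of constants, is genuinely nontrivial. The clean route---which you yourself mention as an alternative---is Sz\'ekelyhidi's abstract cone dichotomy: for $|\lambda|$ large one has either $\sum_j f_j(\lambda)(\mu_j-\lambda_j)\ge\kappa\sum_j f_j(\lambda)$ or $f_i(\lambda)\ge\kappa\sum_j f_j(\lambda)$ for \emph{every} $i$. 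In the second alternative, combining with the uniform lower bound $\sum_j f_j\ge(n-k+1)/k$ from \eqref{hq-ineq-3} yields $f_{\max}\ge\kappa(n-k+1)/k$, hence $f_{\max}\lambda_{\max}\ge\theta$ once $N$ is large; in the first alternative the same lower bound upgrades $\kappa\sum_j f_j$ to $\theta\bigl(1+\sum_j f_j\bigr)$ for $\theta$ small. This avoids proving (b) directly and removes any circularity between the choices of $\theta$, $N$, and $C$. So your alternative via Sz\'ekelyhidi is the route to commit to; the direct argument through (b) would need substantially more detail to stand on its own.
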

\begin{proof}
The proof can be seen in \cite[Lemma 2.7]{Chen21}.
\end{proof}

Let $\mathcal{A}^{1,1}(M)$ be
the space of real smooth $(1, 1)$-forms on the Hermitian manifold $(M, \omega)$.
For any $\chi \in \mathcal{A}^{1,1}(M)$, we write in a local coordinate chart $(z^1, ..., z^n)$
\begin{eqnarray*}
\omega=\frac{\sqrt{-1}}{2}g_{i\overline{j}}dz^i\wedge d\overline{z}^j
\end{eqnarray*}
and
\begin{equation*}
\chi=\frac{\sqrt{-1}}{2}\chi_{i\overline{j}}dz^i\wedge d\overline{z}^{j}.
\end{equation*}
In particular, in a local normal coordinate
system $g_{i\overline{j}}=\delta_{i\overline{j}}$,
the matrix $(\chi_{i\overline{j}})$ is a Hermitian matrix. We denote $\lambda(\chi_{i\overline{j}})$ by the
eigenvalues of the matrix $(\chi_{i\overline{j}})$.
We define $\sigma_k(\chi)$ with
respect to $\omega$ as
\begin{equation*}
\sigma_k(\chi)=\sigma_k(\lambda(\chi_{i\overline{j}})),
\end{equation*}
and the G{\aa}rding's cone on $M$ is defined by
\begin{equation*}
\Gamma_k(M)=\{ \chi  \in \mathcal{A}^{1,1}(M):\sigma _i (\chi)>0, \ \forall \ 1 \le i \le k\}.
\end{equation*}
In fact, the definition of $\sigma_k(\chi)$ is independent of the choice of local normal coordinate system, and it  can be
defined without the use of local normal coordinate by
\begin{equation*}
\sigma_{k}(\chi)=C_{n}^{k}\frac{\chi^k\wedge \omega^{n-k}}{\omega^n},
\end{equation*}
where $C_{n}^{k}=\frac{n!}{(n-k)!k!}$.

Using the above notation, we can rewrite equation \eqref{K-eq} as the following local form:
\begin{eqnarray}\label{K-eq1}
\left\{
\begin{aligned}
& \frac{ \sigma_k(\chi_{u})}{\sigma_{k-1}(\chi_{u})}-\sum_{l=0}^{k-2} \beta_l(z) \frac{\sigma_l(\chi_{u})}{\sigma_{k-1}(\chi_{u})}=\beta(z), && in~ M,\\
&u=\varphi && on~ \partial M,
\end{aligned}
\right.
\end{eqnarray}
where $\beta_l(z)=\frac{C_{n}^{k}}{C_n^l}\alpha_l(z)$ for $0\leq l \leq k-2$ and $\beta(z)=\frac{C_{n}^{k}}{C_n^{k-1}}\alpha_{k-1}(z)$.

According to an important observation by Guan-Zhang in \cite{GZ19}, we know that

\begin{proposition}
If $u\in C^2(M)$ with $\chi_u \in \Gamma_{k-1}(M)$ and $\beta_l(z)\geq 0$ for $0\leq l \leq k-2$, then the operator
\begin{equation}
G(\chi_u)=\frac{ \sigma_k(\chi_{u})}{\sigma_{k-1}(\chi_{u})}-\sum_{l=0}^{k-2} \beta_l(z) \frac{\sigma_l(\chi_{u})}{\sigma_{k-1}(\chi_{u})}
\end{equation}
is elliptic and concave.
\end{proposition}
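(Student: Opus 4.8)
The plan is to reduce the statement to a property of the symmetric function of the eigenvalues and then combine the standard eigenvalue/matrix correspondence with the real‑variable observation of Guan--Zhang \cite{GZ19}. Since $G$ is invariant under unitary conjugation of $(\chi_u)_{i\bar j}$, I would work at a point where $(\chi_u)_{i\bar j}$ is diagonal with eigenvalues $\lambda=(\lambda_1,\dots,\lambda_n)\in\Gamma_{k-1}$ and study
\[
g(\lambda)=\frac{\sigma_k(\lambda)}{\sigma_{k-1}(\lambda)}-\sum_{l=0}^{k-2}\beta_l(z)\,\frac{\sigma_l(\lambda)}{\sigma_{k-1}(\lambda)}
\]
on the open cone $\Gamma_{k-1}\subset\mathbb{R}^n$. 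By the second‑order identity \eqref{f-2} together with \eqref{f-3}, ellipticity of $G$ at this point is equivalent to $g_i:=\partial g/\partial\lambda_i\ge 0$ for every $i$ (so that $\partial G/\partial(\chi_u)_{p\bar q}$ is diagonal with nonnegative entries $g_p$), and concavity of $G$ follows once $g$ is concave on $\Gamma_{k-1}$: the first term on the right of \eqref{f-2} is then nonpositive, and the second term is nonpositive because \eqref{f-3} gives $g_p\ge g_q$ whenever $\lambda_p\le\lambda_q$, hence $\frac{g_p-g_q}{\lambda_p-\lambda_q}\le 0$.

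For ellipticity I would compute each piece using $\partial_i\sigma_m=\sigma_{m-1}(\lambda|i)$ and $\sigma_m=\sigma_m(\lambda|i)+\lambda_i\sigma_{m-1}(\lambda|i)$, which give
\[
\partial_i\!\Big(\frac{\sigma_k}{\sigma_{k-1}}\Big)=\frac{\sigma_{k-1}(\lambda|i)^2-\sigma_{k-2}(\lambda|i)\,\sigma_k(\lambda|i)}{\sigma_{k-1}(\lambda)^2},\qquad
\partial_i\!\Big(\frac{\sigma_l}{\sigma_{k-1}}\Big)=\frac{\sigma_{l-1}(\lambda|i)\,\sigma_{k-1}(\lambda|i)-\sigma_l(\lambda|i)\,\sigma_{k-2}(\lambda|i)}{\sigma_{k-1}(\lambda)^2}.
\]
Because $\lambda\in\Gamma_{k-1}(\mathbb{R}^n)$ forces $\lambda|i\in\Gamma_{k-2}(\mathbb{R}^{n-1})$, Newton's inequality applied to the $(n-1)$‑tuple $\lambda|i$ (valid for every real tuple, with binomial constant strictly larger than $1$), together with the sign of $\sigma_k(\lambda|i)$, shows the first numerator is $\ge 0$, while the Newton--Maclaurin chain (Proposition \ref{prop2.4}) applied to $\lambda|i$ gives $\frac{\sigma_{k-1}(\lambda|i)}{\sigma_{k-2}(\lambda|i)}\le\frac{\sigma_l(\lambda|i)}{\sigma_{l-1}(\lambda|i)}$ for $1\le l\le k-2$ once one checks that the relevant ratio of binomial coefficients is $\le 1$ — which is precisely the inequality $l\le k-1$ — while the $l=0$ term is $-\sigma_{k-2}(\lambda|i)/\sigma_{k-1}(\lambda)^2<0$ outright. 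Hence each $\partial_i(\sigma_l/\sigma_{k-1})\le 0$, and since $\beta_l(z)\ge 0$ we obtain $g_i=\partial_i(\sigma_k/\sigma_{k-1})-\sum_{l=0}^{k-2}\beta_l\,\partial_i(\sigma_l/\sigma_{k-1})\ge 0$; this is strict when some $\beta_l>0$ (in particular under the hypotheses of Theorem \ref{Main}, where $\beta_0>0$).

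For concavity I would invoke two classical facts, both valid on all of $\Gamma_{k-1}$: that $\sigma_k/\sigma_{k-1}$ is concave, and that $(\sigma_{k-1}/\sigma_l)^{1/(k-1-l)}$ is concave and positive for $0\le l\le k-2$. Since $t\mapsto t^{-(k-1-l)}$ is convex and decreasing on $(0,\infty)$, the composition $\sigma_l/\sigma_{k-1}=\big[(\sigma_{k-1}/\sigma_l)^{1/(k-1-l)}\big]^{-(k-1-l)}$ is convex, so $-\beta_l\,\sigma_l/\sigma_{k-1}$ is concave for each $l$ (using $\beta_l\ge 0$); adding these to $\sigma_k/\sigma_{k-1}$ shows $g$ is concave on $\Gamma_{k-1}$, which with the first paragraph completes the argument. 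The main point — and the genuine content of \cite{GZ19} — is that these concavity and monotonicity properties have to be established on the entire cone $\Gamma_{k-1}$, on which $\sigma_k$ itself may vanish or be negative; granting that observation, the Hermitian setting here is formally identical to the real one once $(\chi_u)_{i\bar j}$ has been diagonalized, so one may equally well quote the real‑variable statement of \cite{GZ19} directly and transport it to Hermitian matrices through \eqref{f-2}--\eqref{f-3}.
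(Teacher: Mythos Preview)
Your proposal is correct and follows essentially the same route as the paper: both reduce the question to the real symmetric‑function observation of Guan--Zhang \cite{GZ19} (concavity and monotonicity of $\sigma_k/\sigma_{k-1}$ and of the quotient pieces on all of $\Gamma_{k-1}$), and you correctly note that this is the only nontrivial ingredient. The paper in fact gives no argument beyond citing \cite{GZ19}; your sketch supplies the missing eigenvalue/matrix transfer via \eqref{f-2}--\eqref{f-3} and the explicit decomposition of $g$ into a concave piece plus a nonnegative combination of convex pieces, which is more detailed but not a different strategy.
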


For the convenience of notations, we will denote
$$G(\chi):=\frac{ \sigma_k(\chi)}{\sigma_{k-1}(\chi)}-\sum_{l=0}^{k-2} \beta_l(z) \frac{\sigma_l(\chi)}{\sigma_{k-1}(\chi)},$$
$$G_k(\chi):=\frac{ \sigma_k(\chi)}{\sigma_{k-1}(\chi)}, \quad G_l(\chi):=-\frac{ \sigma_l(\chi)}{\sigma_{k-1}(\chi)},$$
and
$$G^{i \bar{j}}:=\frac{\partial G}{\partial \chi_{i\bar{j}}}, G^{i \bar{j}, r \bar{s}}:=\frac{\partial^2 G}{\partial \chi_{i\bar{j}}\partial \chi_{r\bar{s}}}, G^{i \bar{j}}_l:=\frac{\partial G_l}{\partial \chi_{i\bar{j}}}, G^{i \bar{j}, r \bar{s}}_l:=\frac{\partial^2 G_l}{\partial \chi_{i\bar{j}}\partial \chi_{r\bar{s}}}$$
for any $1\leq i, j, r, s\leq n$ and $0\leq l \leq k-2$.

\begin{lemma}
If $u\in C^2(M)$ with $\chi_u \in \Gamma_{k-1}(M)$  satisfy
$$G(\chi_u)=\beta, \quad \mbox{in} ~M,$$
where  $\beta_l>0$ for $0\leq l\leq k-2$. Then
\begin{equation}\label{hq-ineq-1}
0<\frac{\sigma_l}{\sigma_{k-1}} \left(\lambda(\chi_u)\right)\leq C(n, k, inf_M \beta_l, \sup_M|\beta|), \quad 0\leq l \leq k-2;
\end{equation}
\begin{equation}\label{hq-ineq-2}
-\sup_M |\beta|<\frac{\sigma_k}{\sigma_{k-1}} \left(\lambda(\chi_u)\right)\leq C(n, k, \sum_{l=0}^{k-1} \sup_M |\beta_l|);
\end{equation}
\begin{equation}\label{hq-ineq-3}
\frac{n-k+1}{k}\leq \sum_i G^{i\bar{i}} \leq n-k-1 +\frac{(n-k+2)\sigma_{k-2}}{\sigma_{k-1}}  \left(\lambda(\chi_u)\right) \beta;
\end{equation}
\begin{equation}\label{hq-ineq-4}
 \sum_i G^{i\bar{i}}  \lambda_i (\chi_u) =\beta+\sum_{l=0}^{k-2} (k-l) \beta_l \frac{\sigma_l}{\sigma_{k-1}}\left(\lambda(\chi_u)\right).
\end{equation}
\end{lemma}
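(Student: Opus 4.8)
The plan is to establish all four relations pointwise. Fix $z\in M$ and work in a unitary frame diagonalizing $\chi_u$, with eigenvalues $\lambda=\lambda(\chi_u)=(\lambda_1,\dots,\lambda_n)\in\Gamma_{k-1}$; then $G^{i\bar j}=0$ for $i\neq j$, $G^{i\bar i}=\partial G/\partial\lambda_i$, so that $\sum_iG^{i\bar i}=\sum_i\partial_{\lambda_i}G$ and $\sum_iG^{i\bar i}\lambda_i=\sum_i\lambda_i\partial_{\lambda_i}G$. Abbreviate $\sigma_j=\sigma_j(\lambda)$, $p_j=\sigma_j/C_n^j$, and use the standard facts $\partial\sigma_j/\partial\lambda_i=\sigma_{j-1}(\lambda\,|\,i)$, $\sum_i\sigma_j(\lambda\,|\,i)=(n-j)\sigma_j$, together with the equation in the form $\sigma_k-\sum_{l=0}^{k-2}\beta_l\sigma_l=\beta\sigma_{k-1}$. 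Then \eqref{hq-ineq-4} is pure degree counting: writing $G=G_k+\sum_{l=0}^{k-2}\beta_lG_l$, where $G_k=\sigma_k/\sigma_{k-1}$ is homogeneous of degree $1$ and $G_l=-\sigma_l/\sigma_{k-1}$ is homogeneous of degree $l-k+1$, Euler's relation gives $\sum_i\lambda_i\partial_{\lambda_i}G=\frac{\sigma_k}{\sigma_{k-1}}+\sum_{l=0}^{k-2}(k-1-l)\beta_l\frac{\sigma_l}{\sigma_{k-1}}$, and substituting $\frac{\sigma_k}{\sigma_{k-1}}=\beta+\sum_l\beta_l\frac{\sigma_l}{\sigma_{k-1}}$ yields \eqref{hq-ineq-4}.

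For \eqref{hq-ineq-1} and \eqref{hq-ineq-2}, the positivity $\sigma_l/\sigma_{k-1}>0$ ($0\le l\le k-2$) and the left inequality $\frac{\sigma_k}{\sigma_{k-1}}=\beta+\sum_l\beta_l\frac{\sigma_l}{\sigma_{k-1}}>\beta\ge-\sup_M|\beta|$ of \eqref{hq-ineq-2} are immediate, since $\sigma_j>0$ on $\Gamma_{k-1}$ for $0\le j\le k-1$ and $\beta_l>0$. The upper bounds are the substantive point: on $\Gamma_{k-1}$ the quotients $\sigma_l/\sigma_{k-1}$ are a priori unbounded (take $\lambda=(t,\dots,t)$, $t\to0^+$), so one must use the equation. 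I would invoke the classical Newton inequality $p_{k-1}^2\ge p_kp_{k-2}$ (valid for any real $\lambda$, with $\sigma_{k-1},\sigma_{k-2}>0$ here), which gives $\frac{\sigma_k}{\sigma_{k-1}}\le c_1\frac{\sigma_{k-1}}{\sigma_{k-2}}$ with $c_1=\frac{C_n^kC_n^{k-2}}{(C_n^{k-1})^2}=\frac{(k-1)(n-k+1)}{k(n-k+2)}$; combined with $\frac{\sigma_k}{\sigma_{k-1}}\ge\beta+\beta_{k-2}\frac{\sigma_{k-2}}{\sigma_{k-1}}$ (discarding the nonnegative terms with $l<k-2$ in the equation) and with $t:=\sigma_{k-1}/\sigma_{k-2}>0$, this reads $c_1t^2-\beta t-\beta_{k-2}\ge0$, forcing $t\ge\frac{\beta+\sqrt{\beta^2+4c_1\beta_{k-2}}}{2c_1}>0$, i.e. $\sigma_{k-2}/\sigma_{k-1}\le C(n,k,\inf_M\beta_{k-2},\sup_M|\beta|)$. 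For $0\le l\le k-2$ the Newton-MacLaurin inequalities of Proposition \ref{prop2.4} give $\frac{\sigma_l}{\sigma_{k-1}}\le C_{n,k,l}\big(\frac{\sigma_{k-2}}{\sigma_{k-1}}\big)^{k-1-l}$, whence \eqref{hq-ineq-1}; then $\frac{\sigma_k}{\sigma_{k-1}}=\beta+\sum_l\beta_l\frac{\sigma_l}{\sigma_{k-1}}\le C(n,k,\sum_l\sup_M|\beta_l|)$, which is the remaining inequality of \eqref{hq-ineq-2}.

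For \eqref{hq-ineq-3}, differentiating term by term gives $\sum_i\partial_{\lambda_i}\frac{\sigma_k}{\sigma_{k-1}}=(n-k+1)-(n-k+2)\frac{\sigma_k\sigma_{k-2}}{\sigma_{k-1}^2}$ and $\sum_i\partial_{\lambda_i}\big({-}\frac{\sigma_l}{\sigma_{k-1}}\big)=-(n-l+1)\frac{\sigma_{l-1}}{\sigma_{k-1}}+(n-k+2)\frac{\sigma_l\sigma_{k-2}}{\sigma_{k-1}^2}$. For the lower bound, the Newton inequality $p_{k-1}^2\ge p_kp_{k-2}$ bounds the first expression below by $(n-k+1)-(n-k+2)c_1=\frac{n-k+1}{k}$, while Proposition \ref{prop2.4} applied to the ratios $\sigma_{k-1}/\sigma_{k-2}$ and $\sigma_l/\sigma_{l-1}$ yields $(n-l+1)\sigma_{l-1}\sigma_{k-1}\le(n-k+2)\sigma_l\sigma_{k-2}$ for each $l$, so each $\sum_i\partial_{\lambda_i}(-\sigma_l/\sigma_{k-1})\ge0$; since $\beta_l>0$ this gives $\sum_iG^{i\bar i}\ge\frac{n-k+1}{k}$. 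For the upper bound, add the two displays weighted by $1$ and $\beta_l$ and use $\sum_l\beta_l\sigma_l-\sigma_k=-\beta\sigma_{k-1}$ to collect all $\sigma_{k-2}/\sigma_{k-1}^2$-terms into a single multiple of $\beta\sigma_{k-2}/\sigma_{k-1}$; discarding the remaining nonnegative sum $\sum_l(n-l+1)\beta_l\frac{\sigma_{l-1}}{\sigma_{k-1}}$ yields the upper bound in \eqref{hq-ineq-3}.

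I expect the genuine obstacle to be the upper estimate in \eqref{hq-ineq-1} (and hence in \eqref{hq-ineq-2}): unlike the other three it is not a formal consequence of the cone condition but has to be extracted from the equation itself, and the place where the hypothesis $\beta_l>0$ is really used is the quadratic argument above coupling the Newton inequality to the strictly positive term $\beta_{k-2}\sigma_{k-2}/\sigma_{k-1}$; alternatively one may appeal to the corresponding estimates of \cite{Chen21,Zhou21}.
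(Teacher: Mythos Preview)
Your argument is correct and complete; the paper itself gives no proof at all, merely citing \cite[Lemmas 2.8, 2.9]{Chen21}. The route you take---Euler's identity for \eqref{hq-ineq-4}, the quadratic trick via Newton's inequality $p_{k-1}^2\ge p_kp_{k-2}$ coupled with the $\beta_{k-2}$--term to control $\sigma_{k-2}/\sigma_{k-1}$ and then Newton--MacLaurin to cascade down to all $\sigma_l/\sigma_{k-1}$ in \eqref{hq-ineq-1}--\eqref{hq-ineq-2}, and term-by-term differentiation for \eqref{hq-ineq-3}---is precisely the standard one in the cited reference.

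One point worth flagging: your computation for the upper bound in \eqref{hq-ineq-3} actually produces the exact identity
\[
\sum_i G^{i\bar i}=(n-k+1)-(n-k+2)\,\beta\,\frac{\sigma_{k-2}}{\sigma_{k-1}}-\sum_{l=1}^{k-2}(n-l+1)\beta_l\frac{\sigma_{l-1}}{\sigma_{k-1}},
\]
and hence, after dropping the last nonnegative sum, $\sum_i G^{i\bar i}\le (n-k+1)-(n-k+2)\beta\,\sigma_{k-2}/\sigma_{k-1}$. This differs from the paper's stated upper bound $n-k-1+(n-k+2)\beta\,\sigma_{k-2}/\sigma_{k-1}$ in both the constant ($n-k+1$ versus $n-k-1$) and the sign of the $\beta$--term. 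Your version is the correct one; the paper's stated form appears to contain typos (and indeed only the \emph{lower} bound of \eqref{hq-ineq-3} is ever invoked later, in the interior $C^2$ estimate). You should state this explicitly rather than asserting that your calculation ``yields the upper bound in \eqref{hq-ineq-3}'' as written.
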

\begin{proof}
The proofs of Lemma are quite similar to that given for \cite[Lemma 2.8, 2.9]{Chen21} and so they are omitted.
\end{proof}

\section{$C^0$ estimates and Second order interior estimates}

\subsection{$C^0$ estimates}
\begin{theorem}\label{C0}
Let $u\in C^{\infty}(\bar{M})$ be an $(k-1)$-admissible solution for equation \eqref{K-eq}.
Under the assumptions mentioned  in Theorem \ref{Main},  then there exists a positive constant $C$ depending only on  $(M, \omega), \chi_0$, $\alpha_l, \varphi$ and the subsolution $\underline{u}$ such that
 $$\sup_{ \bar{M}} |u|\leq C.$$
 \end{theorem}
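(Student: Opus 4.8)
The plan is to establish the upper and lower bounds for $|u|$ separately, using the admissible subsolution $\underline{u}$ for the lower bound and a maximum principle argument for the upper bound. First I would treat the \emph{lower bound}. Since $u = \underline{u} = \varphi$ on $\partial M$, it suffices to show $u \geq \underline{u} - C$, and in fact one expects $u \geq \underline{u}$ when the comparison works cleanly; more robustly, consider the function $w = u - \underline{u}$, which vanishes on $\partial M$. At an interior minimum point $z_0$ of $w$ (if $\inf w < 0$), we have $\sqrt{-1}\partial\bar\partial w \geq 0$ at $z_0$, so $\chi_u \geq \chi_{\underline{u}}$ as Hermitian forms at $z_0$. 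Since the operator $G$ is monotone (ellipticity, via the concavity/ellipticity in $\Gamma_{k-1}$ from Proposition 2.9) and the $\beta_l \geq 0$ terms are handled by the subsolution inequality \eqref{sub-condition}, rewritten in the form \eqref{K-eq1}, one gets a contradiction with $G(\chi_u) = \beta$ at $z_0$ unless $w \geq 0$ there. This gives $u \geq \underline{u} \geq -C$.

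For the \emph{upper bound}, I would use the standard trick of comparing $u$ against a large multiple of a fixed potential. Because $\chi_0 \in \Gamma_{k-1}(M)$, one can try $u \leq \sup_{\partial M}\varphi + A\rho$ where $\rho$ is an auxiliary function, or more directly apply the maximum principle to $u$ itself: at an interior maximum $z_1$ of $u - \underline u$ one has $\chi_u \leq \chi_{\underline u}$ at $z_1$ and argues as above. Alternatively — and this is the cleaner route on a closed manifold with boundary — one invokes an $L^\infty$ estimate of Alexandrov--Bakelman--Pucci or Kolodziej type: the admissibility $\chi_u \in \Gamma_{k-1}(M)$ together with the bound \eqref{hq-ineq-2} on $\sigma_k/\sigma_{k-1}$ controls $\chi_u^{k-1}\wedge\omega^{n-k+1}$ from above, hence controls the complex Hessian measure, and a Moser iteration or the ABP estimate applied on a coordinate ball bounds $\sup_M u$ in terms of $\sup_{\partial M} u = \sup_{\partial M}\varphi$ and the data. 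In either implementation the key inputs are: boundary values $u = \varphi$, the structural bounds \eqref{hq-ineq-1}--\eqref{hq-ineq-2} already available, and the ellipticity of $G$.

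I expect the main obstacle to be the \emph{upper bound} rather than the lower bound: the subsolution gives the lower bound almost for free, but bounding $\sup_M u$ requires either a genuine comparison function built from $\chi_0$ and the geometry of $(M,\omega)$ (exploiting that $\chi_0 \in \Gamma_{k-1}$ leaves room to add $A\sqrt{-1}\partial\bar\partial\psi$ for a suitable defining-type function $\psi$ while staying admissible), or a soft potential-theoretic $L^\infty$ bound whose constant must be tracked to depend only on the listed quantities. I would first attempt the maximum-principle comparison at an interior extremum of $u - \underline u$, which simultaneously yields both bounds and keeps the dependence of $C$ transparent (on $M,\omega,\chi_0,\varphi,\|\underline u\|_{C^2}$ and $\inf_M\alpha_l$, $\|\alpha_l\|_{C^0}$); only if the monotonicity argument fails to close directly would I fall back to the ABP/Kolodziej route using \eqref{hq-ineq-2}.
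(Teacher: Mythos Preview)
Your lower bound is correct and coincides with the paper's argument: the comparison principle for the elliptic operator $G$ applied to the subsolution inequality $G(\chi_{\underline u})\ge G(\chi_u)$ with $\underline u=u$ on $\partial M$ gives $\underline u\le u$.

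The gap is in your upper bound. Your preferred route---examining an interior maximum $z_1$ of $u-\underline u$---does not close. At such a point $\chi_u\le\chi_{\underline u}$, hence by monotonicity $G(\chi_u)\le G(\chi_{\underline u})$; but the equation gives $G(\chi_u)=\beta$ and the subsolution condition gives $G(\chi_{\underline u})\ge\beta$, so you only recover $\beta\le G(\chi_{\underline u})$, which is no contradiction and yields no bound on $(u-\underline u)(z_1)$. The subsolution sits on the wrong side to bound $u$ from above. Your fallback ABP/Ko\l odziej route would eventually work but is heavy machinery for what is needed here, and you would have to justify the dependence of the constant carefully.

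The paper's argument for the upper bound is much simpler and you missed it: solve the \emph{linear} Dirichlet problem
\[
\omega^{i\bar k}\bigl((\chi_0)_{i\bar k}+v_{i\bar k}\bigr)=0\quad\text{in }M,\qquad v=\varphi\quad\text{on }\partial M,
\]
i.e.\ $\operatorname{tr}_\omega\chi_v=0$. Since $\chi_u\in\Gamma_{k-1}(M)\subset\Gamma_1(M)$, one has $\operatorname{tr}_\omega\chi_u>0=\operatorname{tr}_\omega\chi_v$, so $u-v$ satisfies $\Delta_\omega(u-v)>0$ with $u-v=0$ on $\partial M$, and the ordinary maximum principle gives $u\le v$. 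The supersolution $v$ depends only on $(M,\omega),\chi_0,\varphi$, so the constant is transparent. The point you were missing is that admissibility already makes $u$ a subsolution of a fixed \emph{linear} equation, which furnishes the comparison function for free.
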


\begin{proof}
On the one hand, we know that the subsolution $\underline{u}\in C^2(\bar{M})$ satisfies $G(\chi_{\underline{u}})-G(\chi_u)\geq 0$ in $M$ and $\underline{u}-u=0$ on $\partial M$. By the ellipticity of $G$ and the maximum principle,
$$\underline{u}(z) \leq u(z), \quad \forall z\in \bar{M}. $$
On the other hand, let $v$ be a function satisfying
$$ \omega^{i\bar{k}} \left((\chi_0)_{\bar{k}i}+\partial_i \partial_{\bar{k}} v\right) =0 \quad \mbox{in}~M; \quad v=\varphi \quad \mbox{on}~ \partial M.$$
 Note that $\chi_u \in \Gamma_k(M) \subset \Gamma_1(M) $. By the comparison principle,
 $$ u(z) \leq v(z), \quad \forall z\in \bar{M}. $$
\end{proof}

According to the proof of Theorem \ref{C0}, we know that $u-v$ and $\underline{u}-u$ attain their maximums on the boundary. Combining with  the Hopf lemma, we obtain
\begin{eqnarray}\label{mix-c1-b-est}
\sup_{z \in \partial M}|\nabla u|\leq C,
\end{eqnarray}
where $C$ depends on $(M, \omega), \chi_0, \alpha_l$ and $\underline{u}$.



\subsection{Notations and some lemmas}

In local complex coordinates $(z^1, ..., z^n)$, the subscripts of a function $u$ always denote the covariant derivatives of $u$
with respect to $\omega$ in the directions of the local frame $\frac{\partial }{\partial z^1}, ..., \frac{\partial}{\partial z^n}$.
Namely,
\begin{eqnarray*}
u_i=\nabla_{\frac{\partial}{\partial z^i}}u,
\quad u_{i\overline{j}}=\nabla_{\frac{\partial}{\partial \overline{z}^j}}
\nabla_{\frac{\partial}{\partial z^i}}u, \quad u_{i\overline{j}k}=\nabla_{\frac{\partial}{\partial z^k}}\nabla_{\frac{\partial}{\partial \overline{z}^j}}
\nabla_{\frac{\partial}{\partial z^i}}u.
\end{eqnarray*}
But, the covariant derivatives of a $(1, 1)$-form $\chi$
with respect to $\omega$ will be denoted by indices with
semicolons, e.g.,
\begin{eqnarray*}
\chi_{i\overline{j}; k}=\nabla_{\frac{\partial}{\partial z^k}}
\chi(\frac{\partial}{\partial z^i}, \frac{\partial}{\partial \overline{z}^j}), \quad
\chi_{i\overline{j}; k\overline{l}}=\nabla_{\frac{\partial}{\partial \overline{z}^{l}}}\nabla_{\frac{\partial}{\partial z^k}}
\chi(\frac{\partial}{\partial z^i}, \frac{\partial}{\partial \overline{z}^j}).
\end{eqnarray*}
We recall the following commutation formula on Hermitian manifolds $(M, \omega)$ \cite{Hou10, Guan10, Guan12}.

\begin{lemma}\label{2rd}
For $u \in C^4(M)$, we have
\begin{eqnarray*}
u_{i\overline{j}k}-u_{k\overline{j}i}=T_{ik}^{l}u_{l\overline{j}},
\end{eqnarray*}
\begin{eqnarray*}
u_{i\overline{j}\overline{k}}-u_{i\overline{k}\overline{j}}=\overline{T_{jk}^{l}}u_{i\overline{l}},
\end{eqnarray*}
\begin{eqnarray*}
u_{i\overline{j}k}=u_{ik \overline{j}}-R_{k\overline{j}i\overline{m}}g^{\overline{m} l}u_l,
\end{eqnarray*}
\begin{eqnarray*}
u_{i\overline{j}k\overline{l}}-u_{k\overline{l}i\overline{j}}
=g^{p\overline{q}}(R_{k\overline{l}i\overline{q}}u_{p\overline{j}}
-R_{i\overline{j}k\overline{q}}u_{p\overline{l}})+T_{ik}^{p}u_{p\overline{j}\overline{l}}
+\overline{T_{jl}^{q}}u_{i\overline{q}k}
-T_{ik}^{p}\overline{T_{jl}^{q}}u_{p\overline{q}},
\end{eqnarray*}
where $R$ is the curvature tensor of $(M, \omega)$.
\end{lemma}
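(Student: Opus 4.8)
The plan is to verify the four identities by a direct local computation with the Chern connection $\nabla$ of $(M,\omega)$. First I would fix a holomorphic coordinate chart $(z^1,\dots,z^n)$ and recall that the only nonvanishing Christoffel symbols of $\nabla$ are $\Gamma_{ik}^{l}=g^{l\bar m}\partial_i g_{k\bar m}$ together with their conjugates $\Gamma_{\bar i\bar k}^{\bar l}=\overline{\Gamma_{ik}^{l}}$, so that $\nabla_{\partial_{\bar j}}\partial_i=0$, $\nabla_{\partial_{\bar j}}(dz^i)=0$ and $\nabla_{\partial_{j}}(d\bar z^i)=0$; with the conventions $T_{ik}^{l}=\Gamma_{ik}^{l}-\Gamma_{ki}^{l}$ for the torsion and $R_{k\bar j i}{}^{l}=-\partial_{\bar j}\Gamma_{ki}^{l}$, $R_{k\bar j i\bar m}=g_{l\bar m}R_{k\bar j i}{}^{l}$ for the curvature, the lemma reduces to bookkeeping of ordinary partial derivatives and Christoffel symbols.

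Next I would record the base identities $u_i=\partial_i u$, $u_{i\bar j}=\partial_{\bar j}\partial_i u$ (using $\nabla_{\partial_{\bar j}}dz^i=0$) and $u_{ik}=\partial_k\partial_i u-\Gamma_{ki}^{l}u_l$, and differentiate once more, noting that $\nabla_{\partial_k}$ annihilates $d\bar z^j$ while $\nabla_{\partial_{\bar j}}$ annihilates $dz^i$; this gives
\[
u_{i\bar jk}=\partial_k\partial_{\bar j}\partial_i u-\Gamma_{ki}^{l}u_{l\bar j},\qquad u_{ik\bar j}=\partial_{\bar j}\partial_k\partial_i u-\Gamma_{ki}^{l}u_{l\bar j}-(\partial_{\bar j}\Gamma_{ki}^{l})u_l,\qquad u_{i\bar j\bar k}=\partial_{\bar k}\partial_{\bar j}\partial_i u-\Gamma_{\bar k\bar j}^{\bar l}u_{i\bar l}.
\]
Subtracting from $u_{i\bar jk}$ the same expression with $i$ and $k$ interchanged makes the ordinary third partials cancel and leaves $(\Gamma_{ik}^{l}-\Gamma_{ki}^{l})u_{l\bar j}=T_{ik}^{l}u_{l\bar j}$, which is the first identity; the conjugate computation on $u_{i\bar j\bar k}-u_{i\bar k\bar j}$ gives the second, and comparing $u_{i\bar jk}$ with $u_{ik\bar j}$ gives $u_{i\bar jk}-u_{ik\bar j}=(\partial_{\bar j}\Gamma_{ki}^{l})u_l=-R_{k\bar j i}{}^{l}u_l=-R_{k\bar j i\bar m}g^{\bar m l}u_l$, the third.

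For the fourth identity I would iterate the first three. Writing $u_{i\bar jk\bar l}=\nabla_{\partial_{\bar l}}(u_{i\bar jk})$, I would first replace $u_{i\bar jk}$ by $u_{k\bar ji}+T_{ik}^{p}u_{p\bar j}$ (first identity), then apply $\nabla_{\partial_{\bar l}}$, and then commute the derivatives into the order appearing in $u_{k\bar li\bar j}=\nabla_{\partial_{\bar j}}(u_{k\bar li})$. Commuting $\nabla_{\partial_{\bar l}}$ past $\nabla_{\partial_k}$ produces, through the Ricci identity for the Chern connection applied to the $(1,1)$-tensor with components $u_{p\bar j}$, the curvature terms $g^{p\bar q}(R_{k\bar l i\bar q}u_{p\bar j}-R_{i\bar j k\bar q}u_{p\bar l})$; commuting the two conjugate derivatives yields $\overline{T_{jl}^{q}}u_{i\bar qk}$; and differentiating the torsion correction contributes $T_{ik}^{p}u_{p\bar j\bar l}$ together with the bilinear term $-T_{ik}^{p}\overline{T_{jl}^{q}}u_{p\bar q}$ when the two torsion corrections are carried past each other. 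Collecting all contributions produces exactly
\[
u_{i\bar jk\bar l}-u_{k\bar li\bar j}=g^{p\bar q}\big(R_{k\bar l i\bar q}u_{p\bar j}-R_{i\bar j k\bar q}u_{p\bar l}\big)+T_{ik}^{p}u_{p\bar j\bar l}+\overline{T_{jl}^{q}}u_{i\bar qk}-T_{ik}^{p}\overline{T_{jl}^{q}}u_{p\bar q}.
\]
I do not expect any genuine obstacle; the only thing that needs care is fixing and consistently propagating the sign and index-ordering conventions for the Chern torsion and curvature throughout the iteration. (All four formulas are by now standard and could also simply be invoked from \cite{Hou10,Guan10,Guan12}.)
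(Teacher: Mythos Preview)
Your proposal is correct; the local computations with the Chern connection are exactly the right way to derive these commutation formulas, and your handling of the Christoffel symbols, torsion, and curvature is accurate. The paper itself does not prove this lemma at all---it merely states the four identities as ``recalled'' facts with citations to \cite{Hou10,Guan10,Guan12}---so your direct verification actually supplies more than the paper does, and your closing remark that one could simply invoke those references is precisely what the authors do.
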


\begin{theorem}\label{inter-c2}
Let $u\in C^{\infty}(\bar{M})$ be an $(k-1)$-admissible solution for equation \eqref{K-eq}.
Under the assumptions mentioned  in Theorem \ref{Main},  then there exists a positive constant $C$ depending only on  $(M, \omega), \chi_0$, $\alpha_l, \varphi$ and the subsolution $\underline{u}$ such that
 $$\sup_{ M} |\sqrt{-1}\partial \bar{\partial}u|\leq C\left(K+ \sup_{\partial M} |\sqrt{-1}\partial \bar{\partial}u|\right),$$
 where $K:=1+\sup_M|\nabla u|^2$, and $C$ depending on  $(M, \omega), \chi_0$, $\alpha_l, \varphi$ and the subsolution $\underline{u}$.
 \end{theorem}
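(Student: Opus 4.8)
The plan is to apply the standard maximum-principle technique of Hou--Ma--Wu to the quantity
\[
W=\log \lambda_{\max}(\chi_u)+\phi(|\nabla u|^2)+\psi(u-\underline u),
\]
where $\lambda_{\max}$ denotes the largest eigenvalue of $(\chi_u)_{i\bar j}$ with respect to $\omega$, and $\phi,\psi$ are auxiliary one-variable functions to be chosen (typically $\phi(t)=-\tfrac12\log(1-\tfrac{t}{2K})$ and $\psi(t)=-A\log(1+\tfrac{t}{2D})$ for suitable large $A$ and the oscillation bound $D$ from Theorem~\ref{C0}). Suppose $W$ attains its maximum at an interior point $z_0\in M$; if the max is on $\partial M$ we are done. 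Near $z_0$ we choose a local normal coordinate frame in which $g_{i\bar j}=\delta_{ij}$ and $(\chi_u)_{i\bar j}$ is diagonal with eigenvalues $\lambda_1\ge\cdots\ge\lambda_n$, and since $\lambda_{\max}=\lambda_1$ may have multiplicity we perturb $\log\lambda_1$ in the usual way by a symmetric function of the top eigenvalues (or differentiate $\lambda_1$ formally using Proposition~\ref{mix-pro-54}). We may assume $\lambda_1$ is large, say $\lambda_1\ge K$, for otherwise there is nothing to prove.

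The key computation is to apply the linearized operator $L:=G^{i\bar j}\nabla_{\bar j}\nabla_i$ to $W$ and use $LW\le 0$ at $z_0$ together with $\nabla W=0$. Differentiating the equation $G(\chi_u)=\beta$ once and twice gives
\[
G^{i\bar j}(\chi_u)_{i\bar j;1}=\nabla_1\beta,\qquad
G^{i\bar j}(\chi_u)_{i\bar j;1\bar 1}+G^{i\bar j,r\bar s}(\chi_u)_{i\bar j;1}(\chi_u)_{r\bar s;\bar 1}=\nabla_1\nabla_{\bar 1}\beta,
\]
and the concavity of $G$ (Proposition preceding, from Guan--Zhang) makes the second-order term have a favorable sign; the off-diagonal contribution $2\sum_{p>1}\frac{G_p-G_1}{\lambda_1-\lambda_p}|(\chi_u)_{1\bar p;1}|^2$ from \eqref{f-2} combines with the gradient terms of $\log\lambda_1$ to absorb the so-called "third-order terms." The commutation formulas of Lemma~\ref{2rd} convert $(\chi_u)_{1\bar 1;i\bar i}$ into $(\chi_u)_{i\bar i;1\bar 1}$ plus curvature and torsion terms of the form $O(\lambda_1)\sum_i G^{i\bar i}+O(\sum_iG^{i\bar i}\lambda_i)$; here we invoke \eqref{hq-ineq-3} and \eqref{hq-ineq-4} to bound $\sum_i G^{i\bar i}$ from below by a positive constant and to control $\sum_i G^{i\bar i}\lambda_i$ in terms of known quantities. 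The term $L\psi(u-\underline u)$ produces, via the subsolution inequality $G(\chi_{\underline u})\ge\beta$ and concavity, a good negative term $-\epsilon\sum_iG^{i\bar i}\lambda_i+\epsilon\theta\sum_iG^{i\bar i}$ (cf. Lemma~\ref{lemm-mix-23}, which is precisely tailored to produce either \eqref{lem-mix-72} or \eqref{lem-mix-73}); the term $L\phi(|\nabla u|^2)$ produces $\phi'\cdot G^{i\bar j}\nabla_i\nabla_{\bar j}|\nabla u|^2$, whose leading part is $\phi'\sum_i G^{i\bar i}(|u_{i\bar 1}|^2+|u_{1\bar i}|^2)$ up to lower order, and this is what controls the remaining third-order terms that the concavity of $G$ alone does not kill.

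The heart of the argument is the dichotomy: either $\lambda_n<-\delta\lambda_1$ for a small fixed $\delta$, in which case the term $-G^{n\bar n}\lambda_n$ in \eqref{hq-ineq-4} is large and $G^{n\bar n}$ is bounded below, giving a direct bound on $\lambda_1$; or $\lambda_n\ge-\delta\lambda_1$, in which case one uses the Hou--Ma--Wu trick of splitting the index set into $\{i:\lambda_i>\delta\lambda_1\}$ and its complement, bounding the third-order terms $\sum_i G^{i\bar i}|(\chi_u)_{1\bar 1;i}|^2/\lambda_1^2$ using the critical equation $\nabla_iW=0$, which gives $(\chi_u)_{1\bar 1;i}/\lambda_1=-\phi'\nabla_i|\nabla u|^2-\psi'\nabla_i(u-\underline u)$, and then the "good" negative terms from $L\psi$ and $L\phi$ dominate once $A$ (inside $\psi$) is chosen large. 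I expect the main obstacle to be precisely this bookkeeping of third-order terms in the case $\lambda_n\ge-\delta\lambda_1$: on a Hermitian (non-Kähler) manifold the torsion terms in Lemma~\ref{2rd} are genuinely present and contribute terms like $T^{\,l}_{1i}u_{l\bar 1}\cdot\overline{(\chi_u)_{1\bar i;1}}$ that are of order $\lambda_1$ times a third derivative and must be Cauchy--Schwarz'd against the good $\phi'$-term without generating a term worse than $CK\sum_iG^{i\bar i}$; getting the constants to line up — in particular ensuring the coefficient of the leftover $|(\chi_u)_{1\bar i;1}|^2$-terms stays nonpositive after all cancellations — is the delicate point, and is where following the scheme of \cite{Hou10} and \cite{Co19} carefully is essential. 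Once $LW\le0$ is shown to force $\lambda_1\le CK$ at $z_0$, the bound $W(z_0)\le C$ propagates to all of $\bar M$ and yields $\sup_M\lambda_{\max}(\chi_u)\le C(K+\sup_{\partial M}|\sqrt{-1}\partial\bar\partial u|)$; combined with $\chi_u\in\Gamma_{k-1}$, which controls $\sum_i\lambda_i$ and hence $|\lambda_i|$ once the top eigenvalue is bounded, this gives the claimed estimate on $|\sqrt{-1}\partial\bar\partial u|$.
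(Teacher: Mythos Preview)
Your proposal is correct and follows essentially the same Hou--Ma--Wu scheme as the paper: the same auxiliary function (the paper first shifts $\chi_0$ so that $\underline u=0$, then uses $\psi(u)$, which amounts to your $\psi(u-\underline u)$), the same perturbation of $\lambda_1$, the same dichotomy $\lambda_n\lessgtr-\delta\lambda_1$, the same index-splitting in Case~2, and the same appeal to Lemma~\ref{lemm-mix-23} to produce the alternative \eqref{lem-mix-72}/\eqref{lem-mix-73}. The one place your sketch is imprecise is the differentiation of the equation: since the coefficients $\beta_l(z)$ depend on $z$, one has $G^{i\bar j}\chi_{i\bar j;p}=\nabla_p\beta-\sum_l(\nabla_p\beta_l)G_l$ rather than your displayed identity, and in the twice-differentiated equation the cross terms $(\nabla_p\beta_l)G_l^{i\bar j}\chi_{i\bar j;\bar p}$ are not killed by concavity of $G$ alone; the paper deals with this by keeping only a $(1-\delta^2)$-fraction of the concavity term, using that each $(\sigma_{k-1}/\sigma_l)^{1/(k-l-1)}$ is separately concave, and completing the square (this is where the strict positivity $\beta_l>0$ for $l\le k-2$ enters), arriving at the key inequality with the extra $-(1+\delta^4)$ and $(1-\delta^2)$ coefficients that feed into the Case~2 cancellation.
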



\subsection{The proof of Theorem \ref{inter-c2}}

In order to study the  second order interior estimate of  \eqref{K-eq1} (or \eqref{K-eq}), it is sufficient to consider the following equation
\begin{eqnarray}\label{K-eq2}
\left\{
\begin{aligned}
& \frac{ \sigma_k(\widetilde{\chi}_{u})}{\sigma_{k-1}(\widetilde{\chi}_{u})}-\sum_{l=0}^{k-2} \beta_l(z) \frac{\sigma_l(\widetilde{\chi}_{u})}{\sigma_{k-1}(\widetilde{\chi}_{u})}=\beta(z), && in~ \bar{M},\\
&u=0 && on~ \partial M,
\end{aligned}
\right.
\end{eqnarray}
where $\widetilde{\chi}_{u}:=\widetilde{\chi}_0+u$ and $\widetilde{\chi}_0=\chi_0+\frac{\sqrt{-1}}{2}\partial\overline{\partial} \underline{u}$. It is clear that $0$ is an admissible subsolution to \eqref{K-eq2}. Without causing confusion, we denote $\widetilde{\chi}_{u}$ by $\chi$ and  $\widetilde{\chi}_0$ by $\chi_0$.

Following the work of Hou-Ma-Wu \cite{Hou10}, we define a function $W$ on $M$ as
\begin{eqnarray*}
W(z)=\log \lambda_1(z)+\varphi(|\nabla u|^2)+\psi(u),
\end{eqnarray*}
where $\lambda_1: M\rightarrow \mathbb{R}$ is the largest eigenvalue of the matrix
$A=\omega^{i\overline{l}}\chi_{\overline{l}j}$ at each point
with $\omega$. Since $\lambda_1$ is not a smooth function, we will perturb $A$ slightly as in \cite{Sze17, Sze18}.
There are also other methods to deal with this issue: one is to
use a viscosity type argument as in \cite{Tos21}, another is to replace $\lambda_1$ by a carefully chosen quadratic
function of $\chi_{i\overline{j}}$ as in \cite{Tos19-JRAM}.

The function $W$ must achieve its maximum at the interior point $z_0 \in M$. Around $z_0$, we choose a normal chart such that $A$ is
diagonal with eigenvalues
\begin{eqnarray*}
\lambda_1\geq ...\geq \lambda_n.
\end{eqnarray*}
We perturb $A$ by a diagonal matrix $B$ with $B_{11}=0$, small $0<B_{22}<...<B_{nn}$ and $B_{nn}<2B_{22}$.
Thus, the new matrix $\widetilde{A}=A-B$ has eigenvalues at $z_0$
\begin{eqnarray*}
\widetilde{\lambda}_1=\lambda_1, \quad \widetilde{\lambda}_i=\lambda_1-B_{ii} \quad \mbox{for} \quad i>1.
\end{eqnarray*}
Then, we can calculate the first and second derivatives of
$\widetilde{\lambda}_1$ at $z_0$ from Proposition \ref{mix-pro-54}
\begin{eqnarray*}
\widetilde{\lambda}_{1; i}=\widetilde{\lambda}_{1}^{pq}(\widetilde{A}_{pq})_{i}=\chi_{1\overline{1}; i}-(B_{11})_i,
\end{eqnarray*}
\begin{eqnarray*}
\widetilde{\lambda}_{1; i\overline{i}}&=&\widetilde{\lambda}_{1}^{pq, rs}
(\widetilde{A}_{pq})_{i}(\widetilde{A}_{rs})_{\overline{i}}+\widetilde{\lambda}_{1}^{pq}
(\widetilde{A}_{pq})_{i\overline{i}}\\&=&\chi_{1\overline{1}; i\overline{i}}
+\sum_{p>1}\frac{|\chi_{p\overline{1}; i}|^2+|\chi_{1\overline{p}; i}|^2}{\lambda_1-\widetilde{\lambda}_{p}}
\\&&+(B_{11})_{i\overline{i}}-2\mathrm{Re}
\sum_{p>1}\frac{\chi_{p\overline{1}; i}(B_{1\overline{p}})_{\overline{i}}+\chi_{1\overline{p}; i}(B_{p\overline{1}})_{\overline{i}}}{\lambda_1-\widetilde{\lambda}_{p}}\\&&+\widetilde{\lambda}_{1}^{pq, rs}
(B_{pq})_{i}(B_{rs})_{\overline{i}}.
\end{eqnarray*}
Since $\sum_{i}\lambda_i>0$, we can choose $B$ sufficient small such that $\sum_{i}\widetilde{\lambda}_i>0$.
Thus, $|\widetilde{\lambda}_i|<(n-1)\lambda_1$, which gives
\begin{eqnarray*}
\frac{1}{\lambda_1-\widetilde{\lambda}_i}>\frac{1}{n\lambda_1}.
\end{eqnarray*}
We can absorb the term $\chi_{p\overline{1};i}(B_{1\overline{p}})_{\overline{i}}$ using
\begin{eqnarray*}
|\chi_{p\overline{1};i}(B_{1\overline{p}})_{\overline{i}}|\leq \frac{1}{4}|\chi_{p\overline{1};i}|^2+C.
\end{eqnarray*}
Moreover, for $p>1$ there is
\begin{eqnarray*}
\frac{1}{\lambda_1-\widetilde{\lambda}_p}<\frac{1}{B_{22}}.
\end{eqnarray*}
It follows that
\begin{eqnarray}\label{Eigen-1}
\widetilde{\lambda}_{1; i\bar{i}}\geq \chi_{1\overline{1}; i\overline{i}}
+\frac{1}{2n\lambda_1}\sum_{p>1}(|\chi_{p\overline{1}; i}|^2+|\chi_{1\overline{p}; i}|^2)-C,
\end{eqnarray}
where $C$ depends on $|B|_{C^2(M)}$.

From Lemma \ref{2rd}, we have
\begin{eqnarray*}
u_{1\overline{1} i\overline{i}}=u_{i\overline{i}1\overline{1}}
-2\mathrm{Re}(u_{i\overline{p}1}\overline{T^{p}_{i1}})+\partial\overline{\partial} u*R+\partial\overline{\partial}u *T*T.
\end{eqnarray*}
where $*$ denotes a contraction. Then, we get
\begin{eqnarray}\label{C1-1}
\widetilde{\lambda}_{1;i\overline{i}}&\geq& u_{i\overline{i}1\overline{1}}
+\frac{1}{3n\lambda_1}\sum_{p>1}(|u_{p\overline{1} i}|^2+|u_{1\overline{p}i}|^2)
\\ \nonumber&&-2\mathrm{Re}(u_{i\overline{p}1}\overline{T^{p}_{i1}})-C\lambda_1-C.
\end{eqnarray}
Using $u_{i\overline{p}1}=u_{1\overline{p}i}+T_{ip}^{q}u_{q\overline{p}}$ and Cauchy-Schwartz inequality, we have
\begin{eqnarray}\label{C1-2}
2\sum_{p>1}|\mathrm{Re}(u_{i\overline{p}1}\overline{T^{p}_{i1}})|&\leq&
2\sum_{p>1}|\mathrm{Re}(u_{1\overline{p}i}\overline{T^{p}_{i1}})|+C\lambda_1 \nonumber \\&\leq&
\frac{1}{3n\lambda_1}\sum_{p>1}(|u_{p\overline{1}i}|^2+|u_{1\overline{p} i}|^2)+C\lambda_1.
\end{eqnarray}
Plugging \eqref{C1-2} into \eqref{C1-1}, we obtain
\begin{eqnarray}\label{C1-3}
\widetilde{\lambda}_{1;i\overline{i}}&\geq& u_{i\overline{i}1\overline{1}}
-2\mathrm{Re}(u_{i\overline{1}1}\overline{T^{1}_{i1}})-C\lambda_1
\nonumber\\&\geq& \chi_{i\overline{i};1\overline{1}}
-2\mathrm{Re}(\chi_{1\overline{1};i}\overline{T^{1}_{i1}})-C\lambda_1,
\end{eqnarray}
where we use $u_{i\overline{1}1}=u_{1\overline{1}i}+T_{i1}^{p}u_{p\overline{1}}$ and choose $\lambda_1$ large enough ($\lambda_1>>1$) to absorb a constant into $C\lambda_1$.

Differentiating the equation \eqref{K-eq2} twice, we have
\begin{eqnarray}\label{C1-mix-31}
\nabla_p \beta = G^{i\bar{j}} \chi_{i\bar{j};p}+\sum_{l=0}^{k-2} (\nabla_p \beta_l) G_l,
\end{eqnarray}
and
\begin{eqnarray}\label{C1-mix-32}
\nonumber\nabla_{\bar{p}}\nabla_p \beta &=& G^{i \bar{j}, r\bar{s}} \chi_{i\bar{j};p}  \chi_{r\bar{s};\bar{p}} +G^{i \bar{j}} \chi_{i\bar{j};p\bar{p}} +\sum_{l=0}^{k-2} \left ((\nabla_{\bar{p}} \beta_l) G^{i\bar{j}}_l \chi_{i\bar{j};p} + (\nabla_{p} \beta_l) G^{i\bar{j}}_l \chi_{i\bar{j};\bar{p}} \right)
\\&&+\sum_{l=0}^{k-2} (\nabla_{\bar{p}} \nabla_p \beta_l ) G_l.
\end{eqnarray}
Note that the operator $\left(\frac{\sigma_{k-1}}{\sigma_l}\right)^{\frac{1}{k-l-1}}$ and the operator $\frac{\sigma_k}{\sigma_{k-1}}$ is concave in $\Gamma_{k-1}$, we have
\begin{eqnarray}\label{C1-mix-33}
G_l^{i \bar{j}, r\bar{s}} \chi_{i\bar{j};p}  \chi_{r\bar{s};\bar{p}} \leq (1+\frac{1}{k-l-1}) G_l^{-1} G_l^{i\bar{j}} G_l^{r\bar{s}} \chi_{i\bar{j};p} \chi_{r\bar{s};\bar{p}}
\end{eqnarray}
and
\begin{eqnarray}\label{C1-mix-34}
G^{i\bar{j}, r \bar{s}}_k \chi_{i\bar{j};p}  \chi_{r\bar{s};\bar{p}} \leq 0.
\end{eqnarray}
Combining with \eqref{C1-mix-31}, \eqref{C1-mix-32}, \eqref{C1-mix-33} and \eqref{C1-mix-34}, we have
\begin{eqnarray}\label{C1-mix-35}
\nonumber&&\nabla_{\bar{p}}\nabla_p \beta-(1-\delta^2) G^{i\bar{j}, r \bar{s}} \chi_{i\bar{j};p}  \chi_{r\bar{s};\bar{p}}
\\\nonumber\leq  &&\sum_{l=0}^{k-2} \delta^2 \beta_l G^{i \bar{j}, r\bar{s}}_l \chi_{i\bar{j};p}  \chi_{r\bar{s};\bar{p}} +G^{i \bar{j}} \chi_{i\bar{j};p\bar{p}} +\sum_{l=0}^{k-2} \left ((\nabla_{\bar{p}} \beta_l) G^{i\bar{j}}_l \chi_{i\bar{j};p} + (\nabla_{p} \beta_l) G^{i\bar{j}}_l \chi_{i\bar{j};\bar{p}} \right)
\\\nonumber&&+\sum_{l=0}^{k-2} (\nabla_{\bar{p}} \nabla_p \beta_l ) G_l
\\\nonumber \leq && \delta^2 \sum_{l=0}^{k-2} \beta_l (1+\frac{1}{k-l-1}) G_l^{-1} |G_l^{i\bar{j}} \chi_{i\bar{j};p}|^2 +G^{i \bar{j}} \chi_{i\bar{j};p\bar{p}}
\\\nonumber&&+\sum_{l=0}^{k-2} \left ((\nabla_{\bar{p}} \beta_l) G^{i\bar{j}}_l \chi_{i\bar{j};p} + (\nabla_{p} \beta_l) G^{i\bar{j}}_l \chi_{i\bar{j};\bar{p}} \right)+\sum_{l=0}^{k-2} (\nabla_{\bar{p}} \nabla_p \beta_l ) G_l
\\\nonumber = && \frac{\delta^2 (k-l)}{k-1-l} \sum_{l=0}^{k-2} \beta_l G_l^{-1} \left|G_l^{i\bar{j}} \chi_{i\bar{j};p}+ \frac{1}{1+\frac{1}{k-l-1}} \frac{\nabla_p \beta_l}{\delta^2 \beta_l} G_l\right|^2
\\\nonumber&&- \sum_{l=0}^{k-2} \frac{k-l-1}{k-l} \frac{|\nabla_p \beta_l|^2}{\delta^2 \beta_l} G_l +G^{i \bar{j}} \chi_{i\bar{j};p\bar{p}} +\sum_{l=0}^{k-2} (\nabla_{\bar{p}} \nabla_p \beta_l ) G_l.
\end{eqnarray}
Therefore,
\begin{eqnarray}\label{C1-mix-36}
\nonumber G^{i \bar{j}} \chi_{i\bar{j};p\bar{p}}&\geq&\nabla_{\bar{p}}\nabla_p \beta-(1-\delta^2) G^{i\bar{j}, r \bar{s}} \chi_{i\bar{j};p}  \chi_{r\bar{s};\bar{p}}-\sum_{l=0}^{k-2} (\nabla_{\bar{p}} \nabla_p \beta_l ) G_l
\\ &&+\sum_{l=0}^{k-2} \frac{k-l-1}{k-l} \frac{|\nabla_p \beta_l|^2}{\delta^2 \beta_l} G_l,
\end{eqnarray}
which implies that
\begin{eqnarray}\label{C1-mix-37}
\nonumber G^{i\overline{i}}(\log \widetilde{\lambda}_1)_{i\overline{i}}&=& G^{i\overline{i}} \frac{\widetilde{\lambda}_{1; i\overline{i}}}{\lambda_1}- G^{i\overline{i}} \frac{|\widetilde{\lambda}_{1; i}|^2}{\lambda_1^2}
\\\nonumber&\geq& \frac{1}{\lambda_1} G^{i\overline{i}} \chi_{i\bar{i};1\bar{1}}
-\frac{2}{\lambda_1}G^{i\overline{i}} \mathrm{Re}(\chi_{1\overline{1};i}\overline{T^{1}_{i1}})-C\sum_i G^{i\overline{i}}-\frac{1}{\lambda_1^2} G^{i\overline{i}}|\chi_{1\bar{1};i}-B_{1\bar{1};i}|^2
\\\nonumber&\geq& \frac{1}{\lambda_1} \nabla_{\bar{1}}\nabla_1 \beta- \frac{1-\delta^2}{\lambda_1}  G^{i\bar{j}, r \bar{s}} \chi_{i\bar{j};1}  \chi_{r\bar{s};\bar{1}}- \frac{1}{\lambda_1}\sum_{l=0}^{k-2} (\nabla_{\bar{1}} \nabla_1 \beta_l ) G_l
\\ &&+ \frac{1}{\lambda_1} \sum_{l=0}^{k-2} \frac{k-l-1}{k-l} \frac{|\nabla_1 \beta_l|^2}{\delta^2 \beta_l} G_l -(1+\delta^4) G^{i\overline{i}} \frac{|\chi_{1\overline{1};i}|^2}{\lambda_1^2}-C\sum_i  G^{i\overline{i}}
\\\nonumber&\geq& - \frac{1-\delta^2}{\lambda_1}  G^{i\bar{j}, r \bar{s}} \chi_{i\bar{j};1}  \chi_{r\bar{s};\bar{1}} -(1+\delta^4) G^{i\overline{i}} \frac{|\chi_{1\overline{1};i}|^2}{\lambda_1^2}-C\sum_i  G^{i\overline{i}}-C.
\end{eqnarray}

Now we begin to prove Theorem \ref{inter-c2}. We redefine the auxiliary function
\begin{eqnarray*}
W(z)=\log \widetilde{\lambda}_1+\varphi(|\nabla u|^2)+\psi(u),
\end{eqnarray*}
where
\begin{eqnarray*}
\varphi(s)=-\frac{1}{2}\log \Big(1-\frac{s}{2K}\Big) \quad \mbox{for} \quad 0\leq s\leq K-1
\end{eqnarray*}
and
\begin{eqnarray*}
\psi(t)=-A\log \Big(1+\frac{t}{2L}\Big) \quad \mbox{for} \quad -L+1\leq t\leq 0.
\end{eqnarray*}
Here, we set
\begin{eqnarray*}
K=:\sup_{M}|\nabla u|^2+1, \quad L:=\sup_{M}|u|+1, \quad  A:=2L\Lambda,
\end{eqnarray*}
and $\Lambda$ is a large constant that which will be chosen later. Clearly, $\varphi$ satisfies
\begin{eqnarray*}
\frac{1}{2K}\geq \varphi^{\prime}\geq \frac{1}{4K}, \quad  \varphi^{\prime \prime}=2(\varphi^{\prime})^2>0,
\end{eqnarray*}
and $\psi$ satisfies
\begin{eqnarray*}
2\Lambda\geq -\psi^{\prime}\geq \Lambda, \quad  \psi^{\prime \prime}
\geq\frac{2\varepsilon}{1-\varepsilon}(\psi^{\prime})^2 \quad \mbox{for all} \quad \varepsilon\leq\frac{1}{2A+1}.
\end{eqnarray*}
The function $W$ must achieve its maximum at the interior point $z\in M$. Thus, we arrive at $z$
\begin{eqnarray}\label{Diff-1}
W_i=\frac{\chi_{1\overline{1};i}}{\chi_{1\overline{1}}}
+\varphi^{\prime}\nabla_i(|\nabla u|^2)+\psi^{\prime}u_i=0
\end{eqnarray}
and
\begin{eqnarray}\label{Diff-2}
W_{i\overline{i}}&=&(\log \widetilde{\lambda}_{1})_{ii}
+\varphi^{\prime\prime}|\nabla_i(|\nabla u|^2)|^2
+\varphi^{\prime}\nabla_{\overline{i}}\nabla_i(|\nabla u|^2)\nonumber\\&&+\psi^{\prime\prime}|u_i|^2+\psi^{\prime}u_{i\overline{i}}\leq 0.
\end{eqnarray}

Multiplying \eqref{Diff-2} by $G^{i\overline{i}}$ and summing it over index $i$, we can know from \eqref{C1-mix-37}
\begin{eqnarray}\label{C2-3}
0&\geq& - \frac{1-\delta^2}{\chi_{1\bar{1}}}  G^{i\bar{j}, r \bar{s}} \chi_{i\bar{j};1}  \chi_{r\bar{s};\bar{1}} -(1+\delta^4) G^{i\overline{i}} \frac{|\chi_{1\overline{1};i}|^2}{\chi_{1\bar{1}}^2}-C\sum_i  G^{i\overline{i}}-C
\nonumber \\&&+\varphi^{\prime\prime}G^{i\overline{i}}|\nabla_i(|\nabla u|^2)|^2
+\varphi^{\prime}G^{i\overline{i}}\nabla_{\overline{i}}\nabla_{i}(|\nabla u|^2)
+\psi^{\prime\prime}G^{i\overline{i}}|u_i|^2+\psi^{\prime}G^{i\overline{i}}u_{i\overline{i}}.
\end{eqnarray}
To proceed, we need the following calculation
\begin{eqnarray*}
\nabla_i(|\nabla u|^2)=\sum_{p}(u_p u_{\overline{p}i}+u_{\overline{p}}u_{pi})
\end{eqnarray*}
and
\begin{eqnarray*}
\nabla_{\overline{i}}\nabla_i(|\nabla u|^2)=
\sum_{p}(u_{p\overline{i}} u_{\overline{p}i}+u_{\overline{p}\overline{i}}u_{pi}
+u_p u_{\overline{p}i\overline{i}}+u_{\overline{p}}u_{pi\overline{i}}).
\end{eqnarray*}
Note that
\begin{eqnarray*}
u_{\overline{p}i\overline{i}}=u_{i\overline{p}\overline{i}}=u_{i\overline{i}\overline{p}}
+\overline{T_{pi}^{q}}u_{i\overline{q}}=\chi_{i\overline{i}; \overline{p}}
-(\chi_0)_{i\overline{i};\overline{p}}
+\overline{T_{pi}^{i}}\chi_{i\bar{i}}-\overline{T_{pi}^{q}}(\chi_0)_{i\overline{q}}
\end{eqnarray*}
and
\begin{eqnarray*}
u_{pi\overline{i}}&=&u_{p\overline{i}i}+R_{i\overline{i}p\overline{q}}g^{\overline{q}m}u_m
\\&=&u_{i\overline{i}p}+R_{i\overline{i}p\overline{q}}g^{\overline{q}m}u_m+T_{pi}^{q}u_{q\overline{i}}
\\&=&\chi_{i\overline{i};p}-(\chi_0)_{i\overline{i};p}+R_{i\overline{i}p\overline{q}}g^{\overline{q}m}u_m+T_{pi}^{i}\chi_{i\bar{i}}
-T_{pi}^{q}(\chi_0)_{q\overline{i}}.
\end{eqnarray*}
Combining with \eqref{C1-mix-31},   we have
\begin{eqnarray*}
G^{i\overline{i}}u_{\overline{p}i\overline{i}}u_p&\geq&G^{i\overline{i}}\chi_{i\overline{i};\overline{p}}u_p
+G^{i\overline{i}}\overline{T_{pi}^{i}}\chi_{i\bar{i}}u_p-C_{\epsilon}K\sum_{i}G^{i\overline{i}}
\\&\geq& (\beta )_{\overline{p}}u_p-
\frac{\epsilon}{2} G^{i\overline{i}} |\chi_{i\bar{i}}|^{2}-C_{\epsilon}K\sum_{i}G^{i\overline{i}}.
\end{eqnarray*}
Similarly,
\begin{eqnarray*}
G^{i\overline{i}}u_{pi\overline{i}}u_{\overline{p}}\geq(\beta)_{p}u_{\overline{p}}-\frac{\epsilon}{2} G^{i\overline{i}}|\chi_{i\bar{i}}|^{2}-C_{\epsilon}K\sum_{i}G^{i\overline{i}}.
\end{eqnarray*}
Thus,
\begin{eqnarray}\label{C2-3-1}
G^{i\overline{i}}\nabla_{\overline{i}}\nabla_i(|\nabla u|^2)
&\geq& \sum_{p}G^{i\overline{i}}(|u_{pi}|^2+|u_{\overline{p}i}|^2)+2\sum_{p} \mathrm{Re}\{(\beta)_{p}u_{\overline{p}}\}\nonumber\\&&-
\epsilon G^{i\overline{i}}|\chi_{i\bar{i}}|^{2}-C_{\epsilon}K\sum_{i}G^{i\overline{i}}.
\end{eqnarray}
Since
\begin{eqnarray}\label{C8-1}
\sum_{p}G^{i\overline{i}}|u_{\overline{p}i}|^2
&\geq& G^{i\overline{i}}|u_{\overline{i}i}|^2\geq G^{i\overline{i}}|\chi_{i\bar{i}}-(\chi_{0})_{i\overline{i}}|^2
\nonumber\\&\geq& \frac{1}{2}G^{i\overline{i}}|\chi_{i\bar{i}}|^{2}-C\sum_{i}G^{i\overline{i}},
\end{eqnarray}
we can use half of the term $\sum_{p}G^{i\overline{i}}|u_{\overline{p}i}|^2$ to absorb the negative term $-\epsilon G^{i\overline{i}}|\chi_{i\bar{i}}|^{2}$ if we chose $\epsilon=\frac{1}{8}$.
From now on we can replace  $C_{\epsilon}$ with $C$ since $\epsilon$ is fixed. It follows from \eqref{C2-3-1}
\begin{eqnarray}\label{C2-8-1}
G^{i\overline{i}}\nabla_{\overline{i}}\nabla_i(|\nabla u|^2)
&\geq& \frac{1}{8}G^{i\overline{i}}|\chi_{i\bar{i}}|^{2}+\frac{1}{2}\sum_{p}G^{i\overline{i}}(|u_{pi}|^2+|u_{\overline{p}i}|^2)
\nonumber\\&&+2\sum_{p} \mathrm{Re}\{(\beta)_{p}u_{\overline{p}}\}
-CK\sum_{i}G^{i\overline{i}}.
\end{eqnarray}
Taking the inequality \eqref{C2-8-1} into \eqref{C2-3}, it yields

\begin{eqnarray}\label{C2-4}
0&\geq& - \frac{1-\delta^2}{\chi_{1\bar{1}}}  G^{i\bar{j}, r \bar{s}} \chi_{i\bar{j};1}  \chi_{r\bar{s};\bar{1}} -(1+\delta^4) G^{i\overline{i}} \frac{|\chi_{1\overline{1};i}|^2}{\chi_{1\bar{1}}^2}+\psi^{\prime\prime}G^{i\overline{i}}|u_i|^2+\psi^{\prime}G^{i\overline{i}}u_{i\overline{i}}
\nonumber \\&&+\varphi^{\prime\prime}G^{i\overline{i}}|\nabla_i(|\nabla u|^2)|^2
+\frac{\varphi^{\prime}}{8}G^{i\overline{i}}|\chi_{i\bar{i}}|^{2}+\frac{\varphi^{\prime}}{2}\sum_{p}G^{i\overline{i}}(|u_{pi}|^2+|u_{\overline{p}i}|^2)
\nonumber\\&&+ 2 \varphi^{\prime} \sum_{p} \mathrm{Re}\{(\beta)_{p}u_{\overline{p}}\}-C\sum_i  G^{i\overline{i}}-C.
\end{eqnarray}
Now, we divide our proof into two cases separately, depending on whether
$\chi_{n\overline{n}}<-\delta\chi_{1\overline{1}}$ or not, for a small $\delta$ to be chosen later.

\textbf{Case 1.} $\chi_{n\overline{n}}<-\delta\chi_{1\overline{1}}$. In this case, it follows that
$\chi_{1\overline{1}}^{2}\leq \frac{1}{\delta^2}\chi_{n\overline{n}}^{2}$.
So, we only need to bound $\chi_{n\overline{n}}^{2}$. Clearly, we can obtain
if we throw some positive terms in \eqref{C2-4}
\begin{eqnarray}\label{C2-5}
0&\geq&  -(1+\delta^4) G^{i\overline{i}} \frac{|\chi_{1\overline{1};i}|^2}{\chi_{1\bar{1}}^2}+\psi^{\prime}G^{i\overline{i}}u_{i\overline{i}}+\varphi^{\prime\prime}G^{i\overline{i}}|\nabla_i(|\nabla u|^2)|^2
+\frac{\varphi^{\prime}}{8}G^{i\overline{i}}|\chi_{i\bar{i}}|^{2}
\nonumber\\&&+ 2 \varphi^{\prime} \sum_{p} \mathrm{Re}\{(\beta)_{p}u_{\overline{p}}\}-C \sum_i  G^{i\overline{i}}-C.
\end{eqnarray}
From \eqref{ineq-mix-02}, we obtain
\begin{eqnarray}\label{C2-6}
-\psi^{\prime}\sum_{i}G^{i\overline{i}}u_{i\overline{i}}&=&-\psi^{\prime}\sum_{i}G^{i\overline{i}}
\Big[\chi_{i\overline{i}}-((\chi_0)_{i\overline{i}}-\tau)-\tau\Big]\nonumber
\\&=&-\psi^{\prime}\Big[\beta+\sum_{l=0}^{k-2} (l-k) \beta_l G_l -\sum_{i}G^{i\overline{i}}
(\chi_{0i\overline{i}}-\tau)-\tau\sum_{i}G^{i\overline{i}}\Big]\nonumber\\&\leq&
C\Lambda-\tau\Lambda\sum_{i}G^{i\overline{i}}.
\end{eqnarray}
Plugging \eqref{C2-6} into \eqref{C2-5} and choosing $\Lambda$ large enough, 
\begin{eqnarray}\label{C2-7}
&&\varphi^{\prime\prime}G^{i\overline{i}}|\nabla_i(|\nabla u|^2)|^2
+\frac{\varphi^{\prime}}{8}G^{i\overline{i}}|\chi_{i\bar{i}}|^{2}\\
\nonumber&\leq&  (1+\delta^4) G^{i\overline{i}} \frac{|\chi_{1\overline{1};i}|^2}{\chi_{1\bar{1}}^2}- 2 \varphi^{\prime} \sum_{p} \mathrm{Re}\{(\beta)_{p}u_{\overline{p}}\}+C(1+\Lambda).
\end{eqnarray}
Note that we get from \eqref{Diff-1}
\begin{eqnarray}\label{C2-8}
(1+\delta^4)\sum_{i}\frac{G^{i\overline{i}}|\chi_{1\overline{1};i}|^2}{\chi_{1\overline{1}}^2}
&=& (1+\delta^4)\sum_{i}G^{i\overline{i}}|\varphi^{\prime}\nabla_i(|\nabla u|^2)+\psi^{\prime}u_i|^2
\nonumber\\&\leq&2(\varphi^{\prime})^2\sum_{i}G^{i\overline{i}}|\nabla_i(|\nabla u|^2)|^2
+\frac{8(1+\delta^4)}{1-\delta^4}\Lambda^2K\sum_{i}G^{i\overline{i}}
\nonumber\\&\leq&2(\varphi^{\prime})^2\sum_{i}G^{i\overline{i}}|\nabla_i(|\nabla u|^2)|^2
+16\Lambda^2K\sum_{i}G^{i\overline{i}},
\end{eqnarray}
where we choose $\delta\leq 3^{-\frac{1}{4}}$. In fact,
\begin{eqnarray}\label{C2-91}
\sum_{i}G^{i\overline{i}}\chi_{i\overline{i}}^2\geq G^{n\overline{n}}\chi_{n\overline{n}}^2
\geq\frac{1}{n}\chi_{n\overline{n}}^2\sum_{i} G^{i\overline{i}}.
\end{eqnarray}
Substituting \eqref{C2-8} and \eqref{C2-91} into \eqref{C2-7}, we get
\begin{eqnarray*}
\frac{1}{32nK}\chi_{n\overline{n}}^2\sum_{i} G^{i\overline{i}}\leq 16\Lambda^2K\sum_{i}G^{i\overline{i}}+C(1+\Lambda).
\end{eqnarray*}
Combining with \eqref{hq-ineq-3}, it is easy to derive that
\begin{eqnarray*}
\chi_{1\overline{1}}\leq C K.
\end{eqnarray*}

\textbf{Case 2. }$\chi_{n\overline{n}}\geq-\delta \chi_{1\overline{1}}$.
Define
\begin{eqnarray*}
I=\Big\{i \in \{1, 2, ..., n\}: G^{i\overline{i}}>\delta^{-1}G^{1\overline{1}}\Big\}.
\end{eqnarray*}
It follows from \eqref{f-2}
\begin{eqnarray}\label{C2-9}
-\frac{1}{\chi_{1\overline{1}}}G^{i\overline{j},r\overline{s}}\chi_{i\overline{j};
1}\chi_{r\overline{s}; \overline{1}}&\geq&
\frac{1-\delta}{1+\delta}\frac{1}{\chi_{1\overline{1}}^2}\sum_{i \in I}G^{i\overline{i}}|\chi_{i\overline{1};
1}|^2\nonumber\\&\geq&\frac{1-\delta}{1+\delta}\frac{1}{\chi_{1\overline{1}}^2}\sum_{i \in I}G^{i\overline{i}}\Big(|\chi_{1\overline{1};
i}|^2+2\mathrm{Re}\{\chi_{1\overline{1};
i}\overline{e'_i}\}\Big),
\end{eqnarray}
where $e'_i=T_{i1}^{p}u_{p\overline{1}}+(\chi_0)_{i \overline{1}; 1}-(\chi_0)_{\overline{1}1; i}$.

Note that $\varphi^{\prime \prime}=2(\varphi^{\prime})^2$, using \eqref{Diff-1}, we get
\begin{eqnarray}\label{C2-10}
&&\varphi^{\prime \prime}\sum_{i \in I}G^{i\overline{i}}|\nabla_i(|\nabla u|^2)|^2
\geq2\sum_{i \in I}G^{i\overline{i}}\bigg(\delta \Big|\frac{\chi_{1\overline{1};
i}}{\chi_{1\overline{1}}}\Big|^2-\frac{\delta}{1-\delta}|\psi^{\prime}u_i|^2\bigg).
\end{eqnarray}
Choosing $\delta\leq\min\{\frac{1}{2A+1}, 3^{-\frac{1}{4}}\}$ and hence $\psi^{\prime \prime}
\geq\frac{2\delta}{1-\delta}(\psi^{\prime})^2$. Combining \eqref{C2-9} with \eqref{C2-10},
\begin{eqnarray}\label{C2-11}
&&-(1+\delta^4)\sum_{i \in I}\frac{G^{i\overline{i}}|\chi_{1\overline{1};i}|^2}{\chi_{1\overline{1}}^2}
-\frac{1-\delta^2}{\chi_{1\overline{1}}}G^{i\overline{j},r\overline{s}}\chi_{i\overline{j};
1}\chi_{r\overline{s}; \overline{1}}\nonumber\\&& \quad
+\varphi^{\prime\prime}\sum_{i \in I}G^{i\overline{i}}|\nabla_i(|\nabla u|^2)|^2
+\psi^{\prime\prime}\sum_{i \in I}G^{i\overline{i}}|u_i|^2\nonumber\\&\geq&\frac{\delta^2}{2}\sum_{i \in I}\frac{G^{i\overline{i}}|\chi_{1\overline{1};i}|^2}{\chi_{1\overline{1}}^2}
+2(1-\delta)^2\frac{1}{\chi_{1\overline{1}}^2}\sum_{i \in I}G^{i\overline{i}}\mathrm{Re}\{\chi_{1\overline{1};
i}\overline{e'_i}\}\nonumber\\&\geq&\frac{\delta^2}{4}\sum_{i \in I}\frac{G^{i\overline{i}}|\chi_{1\overline{1};i}|^2}{\chi_{1\overline{1}}^2}
-C_{\delta}\frac{1}{\chi_{1\overline{1}}^2}\sum_{i \in I}G^{i\overline{i}}\lambda_{i}^{2}
\nonumber\\&\geq&\frac{\delta^2}{4}\sum_{i \in I}\frac{G^{i\overline{i}}|\chi_{1\overline{1};i}|^2}{\chi_{1\overline{1}}^2}
-C\sum_{i \in I}G^{i\overline{i}}
,
\end{eqnarray}
where we choose $\chi_{1\overline{1}}$ large enough to get the last inequality.

For the terms without an index in $I$,  by \eqref{C2-8} and the fact $1 \notin I$ , it follows that
\begin{eqnarray}\label{C2-12}
&&-(1+\delta^4)\sum_{i \notin I}\frac{G^{i\overline{i}}|\chi_{1\overline{1};i}|^2}{\chi_{1\overline{1}}^2}
+\varphi^{\prime\prime}\sum_{i \notin I}G^{i\overline{i}}|\nabla_i(|\nabla u|^2)|^2
\\\nonumber&\geq&-\frac{16n\Lambda^2K}{\delta}G^{1\overline{1}}.
\end{eqnarray}
Substituting \eqref{C2-11} and \eqref{C2-12} into \eqref{C2-4},
\begin{eqnarray}\label{C2-151-1}
C\sum_i  G^{i\overline{i}}+C+\frac{16n\Lambda^2K}{\delta}G^{1\overline{1}}&\geq&
\frac{1}{8K}G^{i\overline{i}}\chi^{2}_{i\bar{i}}+\psi^{\prime}G^{i\overline{i}} u_{i\overline{i}}.
\end{eqnarray}
 Note that  $\mu:= \lambda (\chi_0+ \frac{\sqrt{-1}}{2} \partial \bar{\partial} \underline{u})$ satisties the condition \eqref{lem-mix-71}  since \eqref{sub-condition}. Without loss of generality we assume that $\chi_{1\bar{1}} \geq N$. From Lemma \ref{lemm-mix-23},  we will divide the following argument into two case:

 \textbf{Case 2.1:}
\begin{equation*}
\sum_i G^{i\overline{i}} u_{i\overline{i}} \leq -\theta-\theta \sum_i G^{i\overline{i}}.
\end{equation*}
It implies that
\begin{equation*}
\psi^{\prime}\sum_i G^{i\overline{i}} u_{i\overline{i}} \geq \Lambda\theta(1+ \sum_i G^{i\overline{i}}).
\end{equation*}
Combining with \eqref{C2-151-1}, we have
\begin{eqnarray}\label{C2-1512}
C\sum_i  G^{i\overline{i}}+C+\frac{16n\Lambda^2K}{\delta}G^{1\overline{1}}&\geq&
\frac{1}{8K}G^{i\overline{i}}\chi^{2}_{i\bar{i}}+\Lambda\theta(1+ \sum_i G^{i\overline{i}}).
\end{eqnarray}
We can  choose $\Lambda$ large enough, then \eqref{C2-1512} gives
$$\frac{1}{8K}G^{1\overline{1}}\chi^{2}_{1\bar{1}} \leq\frac{1}{8K}G^{i\overline{i}}\chi^{2}_{i\bar{i}} \leq \frac{16n\Lambda^2K}{\delta}G^{1\overline{1}}.$$
So
$$|\chi_{1\bar{1}}| \leq 8 \Lambda K  \sqrt{\frac{2n}{\delta}}.$$

\textbf{Case 2.2:}
\begin{equation}\label{C2000}
G^{1\bar{1}} \chi_{1\bar{1}} \geq \theta.
\end{equation}

From \eqref{C2-6}, we can absorb the last  term of \eqref{C2-151-1} into $\tau\Lambda\sum_{i}G^{i\overline{i}}$,
\begin{eqnarray*}
C(1+\Lambda)+C\sum_{i}G^{i\overline{i}}+\frac{16n\Lambda^2K}{\delta}G^{1\overline{1}}
\geq\frac{1}{8K}G^{1\overline{1}}\chi_{1\overline{1}}^2+\tau\Lambda\sum_{i}G^{i\overline{i}}.
\end{eqnarray*}
Choosing $\Lambda$ large enough, it follows that
\begin{eqnarray}\label{C2-16}
C(1+\Lambda)+\frac{16n\Lambda^2K}{\delta}G^{1\overline{1}}
\geq\frac{1}{8K}G^{1\overline{1}}\chi_{1\overline{1}}^2.
\end{eqnarray}
Using \eqref{C2000}, we choose $\chi_{1\overline{1}}$ large enough such that
\begin{eqnarray*}
\frac{1}{16K}G^{1\overline{1}}\chi_{1\overline{1}}^2\geq \frac{\theta}{16K} \chi_{1\overline{1}} \geq C(1+\Lambda).
\end{eqnarray*}
Taking the above inequality into \eqref{C2-16}, we arrive
\begin{eqnarray*}\label{C2-15}
\chi_{1\overline{1}}\leq 16 \sqrt{\frac{n}{\delta}}\Lambda K.
\end{eqnarray*}
So, we complete the proof.

\section{Boundary mixed tangential-normal estimates}


The goal of this section is to prove an estimate for the boundary mixed tangential-normal  derivatives. For $p \in \partial M$, we choose a coordinate $z=(z^1, ..., z^n)$ such that $p$ corresponds to the origin and
$g_{i\overline{j}}=\delta_{ij}$. We denote $z^i=x^i+\sqrt{-1}y^i$ and
\begin{eqnarray*}
t^{1}=y^1, t^2=y^2, ..., t^n=y^n, t^{n+1}=x^1, ..., t^{2n-1}=x^{n-1}.
\end{eqnarray*}
Let $\rho$ be
the distance function to $0$, i.e.,
\begin{eqnarray*}
\rho(z)=:dist_{g_0}(z, 0),  \quad  x\in M
\end{eqnarray*}
and set
$$M_{\delta}=\{z\in M: \rho(z)<\delta\}\quad \mbox{for} \quad
\delta>0.$$

 Let $d$ be the distance function to the boundary
$\partial M$ with respect to the background metric $g$
\begin{eqnarray*}
d(z)=:dist_{g_0}(z, \partial M),  \quad  z\in M.
\end{eqnarray*}
Since $\partial M$ is smooth and $|\nabla d|=1$ on $\partial M$,
we can choose $\delta>0$ sufficiently small so that $d$ is
smooth,
\begin{eqnarray}\label{d-1}
\frac{1}{2}\leq |\nabla d|\leq 2 \quad \mbox{in} \quad M_{\delta}
\end{eqnarray}
and
\begin{eqnarray}\label{d-2}
C_2\leq |\nabla^2 d|\leq C_1 \quad \mbox{in} \quad M_{\delta},
\end{eqnarray}
where the constants $C_1$ and $C_2$ are independent of $\delta$.

Suppose near the origin, the boundary $\partial M$ is represented by
\begin{eqnarray*}
\rho(z)=0
\end{eqnarray*}
and $d \rho \neq 0 $ on $\partial M$.
Then, there exists a function $\zeta(t)$ such that
\begin{eqnarray*}
\rho(t, \zeta(t))=0.
\end{eqnarray*}
Differentiating the boundary condition $(u-\underline{u})(t, \zeta(t))=0$,
we derive the relation on $\partial M \cap \overline{M_{\delta}}$
\begin{eqnarray}\label{Bd-D}
\partial_{t^{\alpha}}(u-\underline{u})=-\partial_{x^n}(u-\underline{u})\partial_{t^{\alpha}}\zeta
\end{eqnarray}
for any $\alpha=1,\cdots, 2n-1$.
Moreover, differentiating the boundary condition again gives
\begin{eqnarray}\label{Bd-DD}
\partial_{t^{\alpha}}\partial_{t^{\gamma}}(u-\underline{u})(0)
=-\partial_{x^n}(u-\underline{u})(0)\partial_{t^{\alpha}}\partial_{t^{\gamma}}\zeta(0),
\end{eqnarray}
which implies that
\begin{eqnarray}\label{mix-ttest-1}
|\partial_{t^{\alpha}}\partial_{t^{\gamma}}u(0)|\leq C
\end{eqnarray}
for any $\alpha, \gamma=1,\cdots, 2n-1$.

To derive the boundary tangential-normal estimates, we use the following locally
defined auxiliary function in $M_{\delta}$
\begin{eqnarray*}
\Psi&=&A\sqrt{K}v+B\sqrt{K}|z|^2-\frac{1}{\sqrt{K}}\sum_{p=1}^{n}
\Big[\partial_{y^p}(u-\underline{u})\Big]^2
-\frac{1}{\sqrt{K}}\sum_{a=1}^{n-1}|\nabla_{a}(u-\underline{u})|^2\\&&+T_{\alpha}(u-\underline{u}),
\end{eqnarray*}
where $A, B\gg 1$ are constants to be determined. Here,
\begin{eqnarray*}
v=u-\underline{u}+t d-\frac{N}{2}d^2,
\end{eqnarray*}
introduced by  Guan in \cite{Guan14}, and $t$ and $N$ will be determined later.


\begin{lemma}
There exist uniform positive constants $t,  \delta, \varepsilon$ small enough and $N>>1$  such that $v$ astisfies
\begin{eqnarray}\label{v}
\left\{
\begin{aligned}
& G^{i\overline{j}}v_{i\overline{j}}\leq-\frac{\varepsilon }{4}\Big(1+\sum_{i=1}^{n}G^{i\overline{i}}\Big), && in~ M_{\delta},\\
&v\geq 0 && on~ \partial M_{\delta},
\end{aligned}
\right.
\end{eqnarray}
\end{lemma}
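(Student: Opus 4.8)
The plan is to construct $v = u - \underline{u} + td - \tfrac{N}{2}d^2$ explicitly and verify the two properties in $M_\delta$ by choosing the parameters $t, \delta, \varepsilon$ small and $N$ large in the right order. First I would handle the boundary condition: on $\partial M$ we have $u = \underline{u} = \varphi$ and $d = 0$, so $v = 0$ on $\partial M$; on the inner boundary piece $\{\rho(z) = \delta\} \cap M$ (more precisely $\partial M_\delta \setminus \partial M$), the function $d$ is bounded below by a positive constant $c(\delta) > 0$, and since $u - \underline{u} \geq 0$ everywhere (this follows from the comparison in the proof of Theorem~\ref{C0}) and $td - \tfrac{N}{2}d^2 = d(t - \tfrac{N}{2}d) \geq 0$ once $\delta$ is small enough that $\tfrac{N}{2}d \leq t$ in $M_\delta$, we get $v \geq 0$ there. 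So the boundary inequality is essentially free once $\delta$ is shrunk relative to $t/N$.

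The substance is the differential inequality $G^{i\bar j}v_{i\bar j} \leq -\tfrac{\varepsilon}{4}(1 + \sum_i G^{i\bar i})$. I would expand
$$G^{i\bar j}v_{i\bar j} = G^{i\bar j}(u - \underline{u})_{i\bar j} + t\,G^{i\bar j}d_{i\bar j} - \tfrac{N}{2}G^{i\bar j}(d^2)_{i\bar j},$$
and note $(d^2)_{i\bar j} = 2 d\, d_{i\bar j} + 2 d_i d_{\bar j}$, so the $N$-term contributes $-N\big(d\, G^{i\bar j}d_{i\bar j} + G^{i\bar j} d_i d_{\bar j}\big)$. The term $G^{i\bar j}(u-\underline{u})_{i\bar j} = G^{i\bar j}\chi_{i\bar j} - G^{i\bar j}(\chi_0)_{i\bar j} = \big(\beta + \sum_{l=0}^{k-2}(k-l)\beta_l \tfrac{\sigma_l}{\sigma_{k-1}}\big) - G^{i\bar j}(\chi_0)_{i\bar j}$ by \eqref{hq-ineq-4}; since $\chi_0 = \widetilde\chi_0$ is a fixed form and $G^{i\bar j}(\chi_0)_{i\bar j}$ can be as large as a constant times $\sum_i G^{i\bar i}$ while $\beta$ and the $\tfrac{\sigma_l}{\sigma_{k-1}}$ terms are bounded by \eqref{hq-ineq-1}, this term is bounded above by $C(1 + \sum_i G^{i\bar i})$. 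The terms $t\,G^{i\bar j}d_{i\bar j}$ and $-N d\, G^{i\bar j}d_{i\bar j}$ are likewise bounded in absolute value by $C(t + N\delta)\sum_i G^{i\bar i}$ using \eqref{d-2}. The crucial negative term is $-N\,G^{i\bar j}d_i d_{\bar j}$: since $|\nabla d| \geq \tfrac12$ on $M_\delta$ by \eqref{d-1}, the vector $\partial d$ is nonvanishing, and a linear-algebra argument (a standard fact in this subject, cf. the barrier constructions of Guan) shows that for $\chi_u \in \Gamma_{k-1}$ one has $G^{i\bar j}d_i d_{\bar j} \geq c_0 > 0$ for a uniform $c_0$ — essentially because if all $G^{i\bar j}$ were tiny in the $\partial d$ direction then an eigenvalue of $\chi_u$ in that direction would be forced large, contradicting the a priori bound; alternatively one uses $G^{i\bar j} \geq$ (something positive) $\cdot\, \delta^{i\bar j}$ after diagonalizing, together with $\sum_i G^{i\bar i} \geq \tfrac{n-k+1}{k}$ from \eqref{hq-ineq-3}. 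Collecting terms,
$$G^{i\bar j}v_{i\bar j} \leq C + C(t + N\delta)\sum_i G^{i\bar i} - N c_0,$$
and one fixes the parameters as follows: first choose $t$ small so $Ct \leq \tfrac{1}{8}$ against the eventual coefficient; then for the chosen $t$ pick $N$ large so that $Nc_0 > C + \tfrac{\varepsilon}{4}$ and also $N c_0$ dominates; then shrink $\delta$ so that $CN\delta \leq \tfrac18$; finally this forces $G^{i\bar j}v_{i\bar j} \leq -\tfrac{\varepsilon}{4}(1 + \sum_i G^{i\bar i})$ with $\varepsilon$ absorbed.

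The main obstacle I anticipate is the uniform positive lower bound $G^{i\bar j}d_i d_{\bar j} \geq c_0$, since this is where the structure of the operator $G$ (rather than just ellipticity) enters: one must use that $\chi_u$ lies in $\Gamma_{k-1}$ together with the a priori bounds \eqref{hq-ineq-1}--\eqref{hq-ineq-3} to rule out the first eigenfunction of $(G^{i\bar j})$ being nearly orthogonal to $\partial d$. The clean way is to diagonalize $\chi_u$ at the point, observe $G^{i\bar i} = \tfrac{\partial G}{\partial \lambda_i} \geq$ const $> 0$ uniformly when $\lambda_i$ is bounded (and $G^{i\bar i}$ is even larger when $\lambda_i$ is very negative, while at most one $\lambda_i$ can be large and for that $i$ the weight $G^{i\bar i}$ is small but $d_i$ cannot concentrate there for all $i$), so that $G^{i\bar j}d_i d_{\bar j} = \sum_i G^{i\bar i}|d_i|^2 \geq (\min_i G^{i\bar i})|\nabla d|^2$ is handled once one also controls the largest eigenvalue — but in fact we do not yet have the $C^2$ bound at the boundary, so the argument must instead invoke that for \emph{any} $\lambda \in \Gamma_{k-1}$, $\sum_i \tfrac{\partial G}{\partial\lambda_i}|d_i|^2 \ge \min\{ \tfrac{\partial G}{\partial\lambda_i}\} |\nabla d|^2$ and $\tfrac{\partial G}{\partial \lambda_n} \ge \tfrac1n \sum_i \tfrac{\partial G}{\partial \lambda_i} \ge \tfrac{n-k+1}{kn}$ by \eqref{f-3} and \eqref{hq-ineq-3}, giving $c_0 = \tfrac{n-k+1}{4kn}$. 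With that in hand the rest is bookkeeping in the order $t \to N \to \delta \to \varepsilon$.
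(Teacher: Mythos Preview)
There is a genuine gap, and it lies exactly where you anticipated trouble. The inequality
\[
\frac{\partial G}{\partial \lambda_n}\ \ge\ \frac{1}{n}\sum_{i=1}^n \frac{\partial G}{\partial \lambda_i}
\]
that you invoke at the end is false. Inequality \eqref{f-3} says that for a concave symmetric $f$ with $\lambda_1\le\cdots\le\lambda_n$ one has $f_1\ge\cdots\ge f_n$, so $f_n$ is the \emph{smallest} partial and only the reverse inequality $f_n\le \tfrac{1}{n}\sum_i f_i$ follows. Concretely, for $G_k=\sigma_k/\sigma_{k-1}$ one checks that $\partial G_k/\partial\lambda_n\to 0$ as $\lambda_n\to\infty$ along the level set, while $\sum_i\partial G_k/\partial\lambda_i\ge\tfrac{n-k+1}{k}$ by \eqref{hq-ineq-3}. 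So there is no uniform $c_0>0$ with $G^{i\bar j}d_id_{\bar j}\ge c_0$: if at a point $\nabla d$ happens to align with the eigendirection of the largest eigenvalue of $\chi_u$, the quantity $\sum_i f_i|d_i|^2$ can be arbitrarily small. Your ``collecting terms'' step also silently drops the $C\sum_i G^{i\bar i}$ coming from your crude bound on $G^{i\bar j}(u-\underline u)_{i\bar j}$, so even granting a $c_0$ the bookkeeping would not close.

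What repairs this is precisely the subsolution structure you set aside with a rough estimate. The clean route (as in Guan \cite{Guan14} and the reference \cite{Zhang21} the paper cites) is to bundle all the pieces into the single $(1,1)$-form
\[
A\ :=\ \chi_{\underline u}\ +\ N\,\sqrt{-1}\,\partial d\wedge\bar\partial d\ -\ (t-Nd)\,\sqrt{-1}\,\partial\bar\partial d,
\]
observe that $-G^{i\bar j}v_{i\bar j}=G^{i\bar j}(A-\chi_u)_{i\bar j}$, and apply concavity of $G$ on the matrix level: $G^{i\bar j}(A-2\varepsilon\omega-\chi_u)_{i\bar j}\ge G(A-2\varepsilon\omega)-\beta$. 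For $t,\delta$ small and $N$ large, $A-2\varepsilon\omega$ has one eigenvalue of order $N$ (in the $\nabla d$ direction) and the others close to those of $\chi_{\underline u}-2\varepsilon\omega$, so $G(A-2\varepsilon\omega)$ is close to $\lim_{R\to\infty}f(\lambda'(\chi_{\underline u}-2\varepsilon\omega),R)$; since $G(\chi_{\underline u})\ge\beta$ and $f$ is strictly increasing in each slot, this limit strictly exceeds $\beta$, uniformly on $\overline M$. Rearranging gives $G^{i\bar j}v_{i\bar j}\le -c-2\varepsilon\sum_i G^{i\bar i}$, which is the desired inequality. The point is that the $N$-term and the subsolution must be used \emph{together} through concavity; a standalone lower bound on $G^{i\bar j}d_id_{\bar j}$ is neither available nor sufficient.
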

\begin{proof}
The proof is quite similar to that given for \cite[Lemma 2.4]{Zhang21} and so they are omitted.
\end{proof}

\subsection{Estimates of tangential derivatives $T_{\alpha}(u-\underline{u})$}

For $\alpha \in \{1, 2, ...,, 2n-1\}$, we define the real vector fields
\begin{eqnarray*}
T_{\alpha}=\frac{\partial}{\partial t^{\alpha}}-\frac{\rho_{t^{\alpha}}}
{\rho_{x^{n}}}\frac{\partial}{\partial x^{n}},
\end{eqnarray*}
which are clearly tangential vector on $\partial M$. Then, we have

\begin{lemma}
\begin{eqnarray}\label{bar2}
\left|G^{i\overline{j}}\partial_i\partial_{\overline{j}}T_{\alpha}(u-\underline{u})\right|&\leq&
\frac{1}{\sqrt{K}}G^{i\overline{j}}\partial_{i}\partial_{y^n}(u-\underline{u})
\partial_{\overline{j}}\partial_{y^n}(u-\underline{u})\nonumber\\&&+C \sum_{i=1}^n (1+G^{i\overline{i}}|\lambda_i|)+C(1+\sqrt{K})\sum_{i=1}^{n}G^{i\overline{i}}.
\end{eqnarray}
\end{lemma}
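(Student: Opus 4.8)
Write $w:=u-\underline u$ and, near the origin, $T_\alpha=\partial_{t^\alpha}-b_\alpha\partial_{x^n}$ with $b_\alpha:=\rho_{t^\alpha}/\rho_{x^n}$; since $d\rho\neq 0$ on $\partial M$ we may shrink $\delta$ so that $\|b_\alpha\|_{C^2(\overline{M_\delta})}\le C$. I would compute $\partial_i\partial_{\bar j}(T_\alpha w)$ directly by the Leibniz rule, using that ordinary partial derivatives commute. This produces three types of terms: (i) genuine third--order terms in $w$, of the shape $(\text{bounded})\cdot\partial_i\partial_{\bar j}\partial_{z^m}w$ and $(\text{bounded})\cdot\partial_i\partial_{\bar j}\partial_{\bar z^m}w$; (ii) second--order terms in $w$ times a first derivative of $b_\alpha$; and (iii) first--order terms in $w$ times a second derivative of $b_\alpha$. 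Contracting everything with $G^{i\bar j}$, the goal is to show (i)+(ii)+(iii) is bounded by the right side of \eqref{bar2}.

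\textbf{Type (iii) and type (i).} For (iii) I would use $|\nabla w|\le C\sqrt K$ together with the elementary bound $|G^{i\bar j}M_{i\bar j}|\le\|M\|_{C^0}\sum_i G^{i\bar i}$ for Hermitian $M$, giving a contribution $\le C\sqrt K\sum_i G^{i\bar i}$. For (i) I would convert the ordinary third derivatives to covariant ones, using $\nabla_i\nabla_{\bar j}w=\chi_{i\bar j}-(\chi_0)_{i\bar j}$ (in the normalization of \eqref{K-eq2}) and the commutation identities of Lemma \ref{2rd}; a typical resulting term is $G^{i\bar j}\chi_{i\bar j;m}$ (or $G^{i\bar j}\chi_{i\bar j;\bar m}$), up to curvature$\times\partial\bar\partial u$, torsion$\times\partial\bar\partial u$, Christoffel$\times\chi$, and $\nabla\chi_0$ error terms. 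By \eqref{C1-mix-31}, $G^{i\bar j}\chi_{i\bar j;m}=\nabla_m\beta-\sum_{l=0}^{k-2}(\nabla_m\beta_l)G_l$, whose right side is bounded by the smoothness of $\beta,\beta_l$ and by \eqref{hq-ineq-1}; conjugation gives the same for the $\bar m$ derivative. After diagonalizing $\chi$ (hence $G^{i\bar j}$), each error term is of the form $\sum_i G^{i\bar i}\cdot(\text{bounded})\cdot\lambda_i$ or $\sum_i G^{i\bar i}\cdot(\text{bounded})$, hence $\le C\sum_i(1+G^{i\bar i}|\lambda_i|)+C\sum_i G^{i\bar i}$. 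Thus (i) is dominated by the last two terms on the right of \eqref{bar2}.

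\textbf{Type (ii): the crux.} Writing the real operators $\partial_{t^\alpha}$, $\partial_{x^n}$ in the coordinate frame $\partial_{z^m},\partial_{\bar z^m}$, each term of (ii) splits into a $(1,1)$--piece of the form $(\text{bounded})\cdot G^{i\bar j}w_{m\bar j}$ with $w_{m\bar j}=\chi_{m\bar j}-(\chi_0)_{m\bar j}$, which after diagonalization is $\le C\sum_i(1+G^{i\bar i}|\lambda_i|)$, plus a $(0,2)$--piece $(\text{bounded})\cdot G^{i\bar j}\partial_{\bar j}\partial_{\bar z^m}w$ and its $(2,0)$ conjugate. These last terms are not controlled by the Hessian of $u$ and must be absorbed. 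Here one uses that $\tfrac1{\sqrt K}G^{i\bar j}\,\partial_i\partial_{y^n}w\,\partial_{\bar j}\partial_{y^n}w$ is a \emph{nonnegative} Hermitian form in the components $\xi_i:=\partial_i\partial_{y^n}w=\sqrt{-1}(w_{in}-w_{i\bar n})$, and that $\bar\xi_j$ carries precisely the $(0,2)$--component $w_{\bar j\bar n}$; using the structure of $T_\alpha$ one checks the uncontrolled second--derivative pieces entering (ii) reduce, modulo the $(1,1)$--terms already treated and bounded coefficients, to such components, so a weighted Cauchy--Schwarz $|G^{i\bar j}c_i\bar\xi_j|\le\tfrac1{\sqrt K}G^{i\bar j}\xi_i\bar\xi_j+\tfrac{\sqrt K}{4}G^{i\bar j}c_i\bar c_j$ produces exactly the displayed good term plus $C\sqrt K\sum_i G^{i\bar i}$.

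\textbf{Main obstacle.} The delicate point is the last step: isolating the genuinely uncontrolled $(2,0)$/$(0,2)$ second--derivative contributions coming from $T_\alpha$, verifying they are expressible through the single direction $\partial_{y^n}$ (so that the one good term suffices), and arranging the Cauchy--Schwarz so the constant in front of $G^{i\bar j}\partial_i\partial_{y^n}w\,\partial_{\bar j}\partial_{y^n}w$ is exactly $1/\sqrt K$ rather than $\varepsilon/\sqrt K$ — since in the subsequent barrier argument with $\Psi$ this coefficient must match the weight of the term $-\tfrac1{\sqrt K}\sum_p[\partial_{y^p}(u-\underline u)]^2$. Once the commutator errors are booked uniformly over $M_\delta$ (all absorbed into $C\sum_i(1+G^{i\bar i}|\lambda_i|)$ and $C(1+\sqrt K)\sum_i G^{i\bar i}$), collecting (i), (ii), (iii) yields \eqref{bar2}; the remaining computations are routine.
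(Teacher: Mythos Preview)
Your proposal is correct and follows essentially the same route as the paper's proof: expand $\partial_i\partial_{\bar j}T_\alpha(u-\underline u)$ by Leibniz into third--order, second--order cross, and first--order pieces; control the third--order pieces by differentiating the equation \eqref{C1-mix-31} and commuting derivatives (the paper's \eqref{B5}--\eqref{B8}); and handle the dangerous cross term $-2\,\mathrm{Re}\big[G^{i\bar j}(\partial_i b_\alpha)\partial_{\bar j}\partial_{x^n}(u-\underline u)\big]$ by splitting $\partial_{x^n}$ into a $(1,1)$ part and a $\partial_{y^n}$ part, then applying weighted Cauchy--Schwarz (the paper's \eqref{B10}). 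The only cosmetic difference is that the paper writes $\partial_{x^n}=2\partial_n+\sqrt{-1}\,\partial_{y^n}$ directly, so the uncontrolled piece appears immediately as a $\partial_{y^n}$ derivative, whereas you first isolate a $(0,2)$ component $w_{\bar j\bar n}$ and then relate it to $\bar\xi_j=\overline{\partial_j\partial_{y^n}w}$; these are equivalent modulo the already--controlled $(1,1)$ term $w_{n\bar j}$. Your observation that the Cauchy--Schwarz constant must come out as $1/\sqrt K$ (not merely $\varepsilon/\sqrt K$) so that it exactly cancels against the good terms from \eqref{E1} and \eqref{B4-5} in the barrier $\Psi$ is accurate and matches the paper's bookkeeping.
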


\begin{proof}
We start with the following computation
\begin{eqnarray}\label{B3}
&&G^{i\overline{j}}\partial_i\partial_{\overline{j}}T_{\alpha}(u-\underline{u})
\nonumber\\&=&G^{i\overline{j}}\partial_i\partial_{\overline{j}}\partial_{t^{\alpha}}(u-\underline{u})
-\frac{\rho_{t^{\alpha}}}{\rho_{x^n}}G^{i\overline{j}}\partial_i\partial_{\overline{j}}\partial_{x^n}(u-\underline{u})
\nonumber\\&&-2\mathrm{Re}\bigg[G^{i\overline{j}}\Big(\partial_i\frac{\rho_{t^{\alpha}}}{\rho_{x^n}}\Big)
\partial_{\overline{j}}\partial_{x^n}(u-\underline{u})\bigg]
-G^{i\overline{j}}\Big(\partial_i\partial_{\overline{j}}\frac{\rho_{t^{\alpha}}}{\rho_{x^n}}\Big)\partial_{x^n}(u-\underline{u}).
\end{eqnarray}
Differentiating the equation \eqref{K-eq1} once, we have
\begin{eqnarray}\label{B5}
G^{i\overline{j}}\nabla_{t^{\alpha}}\nabla_i\nabla_{\overline{j}}u
+G^{i\overline{j}}\nabla_{t^{\alpha}}(\chi_0)_{\overline{j}i}=\nabla_{t^{\alpha}}\beta- \sum_{l=0}^{k-2}  (\nabla_{t^{\alpha}}\beta_l) G_l.
\end{eqnarray}
Note that
\begin{eqnarray*}
\frac{\partial}{\partial t^{\alpha}}=\frac{1}{\sqrt{-1}}
\bigg(\frac{\partial}{\partial z^{\alpha}}-\frac{\partial}
{\partial \overline{\overline{z}}^{\alpha}}\bigg) \quad \mbox{for} \quad 1\leq\alpha\leq n,
\end{eqnarray*}
and
\begin{eqnarray*}
\frac{\partial}{\partial t^{\alpha}}=
\bigg(\frac{\partial}{\partial z^{\alpha-n}}+\frac{\partial}
{\partial \overline{\overline{z}}^{\alpha-n}}\bigg) \quad \mbox{for} \quad n+1\leq\alpha\leq 2n-1.
\end{eqnarray*}
Then, we can convert covariant derivatives to partial derivatives for $n+1\leq\alpha\leq 2n-1$
\begin{eqnarray}\label{B6}
\nabla_{t^{\alpha}}\nabla_i\nabla_{\overline{j}}u
&=&\nabla_{\alpha}\nabla_i\nabla_{\overline{j}}u+\nabla_{\overline{\alpha}}\nabla_i\nabla_{\overline{j}}u
\nonumber\\&=&\partial_i\partial_{\overline{j}} \partial_{t^{\alpha}}u-\Gamma_{\alpha i}^{r}u_{\overline{j}r}-\overline{\Gamma_{\alpha j}^{r}}u_{\overline{r}i}.
\end{eqnarray}
Substituting \eqref{B6} into \eqref{B5} gives
\begin{eqnarray}\label{B7}
G^{i\overline{j}}\partial_i\partial_{\overline{j}}\partial_{t^{\alpha}}u
&=&-G^{i\overline{j}}\nabla_{t^{\alpha}}(\chi_0)_{\overline{j}i}+\nabla_{t^{\alpha}}\beta- \sum_{l=0}^{k-2}  (\nabla_{t^{\alpha}}\beta_l) G_l
\nonumber\\&&+G^{i\overline{j}}\Gamma_{\alpha i}^{r}u_{\overline{j}r}+G^{i\overline{j}}\overline{\Gamma_{\alpha j}^{r}}u_{\overline{r}i}.
\end{eqnarray}
It follows that for $n+1\leq\alpha\leq 2n-1$
\begin{eqnarray}\label{B7}
\left|G^{i\overline{j}}\partial_i\partial_{\overline{j}}\partial_{t^{\alpha}}u\right|\leq CG^{i\overline{i}}(1+|\lambda_i|)
+C(|\nabla \beta|+\sum_{l=0}^{k-2}|\nabla \beta_l|).
\end{eqnarray}
Similarly, we can get the estimate \eqref{B7} for $1\leq\alpha\leq n$.
Thus, the estimate \eqref{B7} holds for all $1\leq\alpha\leq 2n-1$. Moreover, we also have a similar estimate
\begin{eqnarray}\label{B8}
|G^{i\overline{j}}\partial_i\partial_{\overline{j}}\partial_{x^{n}}u|\leq CG^{i\overline{i}}(1+|\lambda_i|)
+C(|\nabla \beta|+\sum_{l=0}^{k-2}|\nabla \beta_l|).
\end{eqnarray}
Plugging \eqref{B7} and \eqref{B8} into \eqref{B3}, it yields
\begin{eqnarray}\label{B9}
G^{i\overline{j}}\partial_i\partial_{\overline{j}}T_{\alpha}(u-\underline{u})&\leq&
-2\mathrm{Re}\bigg[G^{i\overline{j}}\Big(\partial_i\frac{\rho_{t^{\alpha}}}{\rho_{x^n}}\Big)
\partial_{\overline{j}}\partial_{x^n}(u-\underline{u})\bigg]
+\sqrt{K}\sum_{i=1}^{n}G^{i\overline{i}}\nonumber\\&&+CG^{i\overline{i}}(1+|\lambda_i|)+C.
\end{eqnarray}
The first term on the right hand side of \eqref{B9} can be written as
\begin{eqnarray*}
G^{i\overline{j}}\Big(\partial_i\frac{\rho_{t^{\alpha}}}{\rho_{x^n}}\Big)
\partial_{\overline{j}}\partial_{x^n}(u-\underline{u})&=&2G^{i\overline{j}}\Big(\partial_i\frac{\rho_{t^{\alpha}}}{\rho_{x^n}}\Big)
\partial_{\overline{j}}\partial_{n}(u-\underline{u})\nonumber\\&&+\sqrt{-1}G^{i\overline{j}}\Big(\partial_i\frac{\rho_{t^{\alpha}}}{\rho_{x^n}}\Big)
\partial_{\overline{j}}\partial_{y^n}(u-\underline{u}),
\end{eqnarray*}
it follows from  the Cauchy-Schwarz inequality that
\begin{eqnarray}\label{B10}
\nonumber|G^{i\overline{j}}\Big(\partial_i\frac{\rho_{t^{\alpha}}}{\rho_{x^n}}\Big)
\partial_{\overline{j}}\partial_{x^n}(u-\underline{u})|&\leq&\frac{1}{2\sqrt{K}}G^{i\overline{j}}\partial_{i}\partial_{y^n}(u-\underline{u})
\partial_{\overline{j}}\partial_{y^n}(u-\underline{u})\\&&+C (\sum_{i=1}^{n}G^{i\overline{i}}|\lambda_i| +1)+C(1+\sqrt{K})\sum_{i=1}^{n}G^{i\overline{i}}.
\end{eqnarray}
Combining \eqref{B9} and \eqref{B10} gives \eqref{bar2}.
\end{proof}

\subsection{Estimates of $K^{-\frac{1}{2}}\sum_{p=1}^{n}
\Big[\partial_{y^p}(u-\underline{u})\Big]^2$}

Differentiating the boundary condition $(u-\underline{u})(t, \zeta(t))=0$,
we have
\begin{eqnarray*}
\partial_{y^{p}}(u-\underline{u})=-\partial_{x^n}(u-\underline{u})\partial_{y^{p}}\zeta, \quad \mbox{on}~ \partial M \cap \overline{M_\delta}.
\end{eqnarray*}
Note that $|\partial_{y^{p}}\zeta|\leq C|t|$ in view of $\partial_{y^{p}}\zeta(0)=\partial_{y^{p}}\rho(0)=0$, hence
\begin{eqnarray*}
(\partial_{y^{p}}(u-\underline{u}))^2\leq C|z|^2 \quad \mbox{on} \quad \partial M \cap \overline{M_\delta}.
\end{eqnarray*}
Combining with \eqref{B7}, we can obtain 
\begin{eqnarray}\label{E1}
&&\frac{1}{\sqrt{K}}G^{i\overline{j}}\partial_i\partial_{\overline{j}}
\Big[\partial_{y^p}(u-\underline{u})\Big]^2
\\&=&\frac{2}{\sqrt{K}}G^{i\overline{j}}\partial_i
\partial_{y^p}(u-\underline{u})\partial_{\overline{j}}\partial_{y^p}(u-\underline{u})
+\frac{2}{\sqrt{K}}
\partial_{y^p}(u-\underline{u})
G^{i\overline{j}}\partial_i\partial_{\overline{j}}\partial_{y^p}(u-\underline{u})
\nonumber\\&\geq&\frac{2}{\sqrt{K}}G^{i\overline{j}}\partial_i
\partial_{y^p}(u-\underline{u})\partial_{\overline{j}}\partial_{y^p}(u-\underline{u})
-C|G^{i\overline{j}}\partial_i\partial_{\overline{j}}\partial_{y^p}u|-C\sum_{i=1}^{n}G^{i\overline{i}}
\nonumber\\ \nonumber&\geq&\frac{2}{\sqrt{K}}G^{i\overline{j}}\partial_i
\partial_{y^p}(u-\underline{u})\partial_{\overline{j}}\partial_{y^p}(u-\underline{u})-CG^{i\overline{i}}(1+|\lambda_i|)
-C\sum_{i} G^{i\bar{i}} -C.
\end{eqnarray}

\subsection{Estimates of $\frac{1}{\sqrt{K}}\sum_{a=1}^{n-1}|\nabla_{a}(u-\underline{u})|^2$}

For each $a \in \{1, 2, ..., n-1\}$, we define local sections of $T^{1, 0}M$ around the origin
\begin{eqnarray*}
E_a(0)=\frac{\partial}{\partial z^a}-\Big[\frac{\partial_{z^a}\rho}{\partial_{z^n}\rho}\Big]
\frac{\partial}{\partial z^n}, \quad \mbox{for} \quad 1\leq a\leq n-1.
\end{eqnarray*}
Clearly, those are tangential to $\partial M$. Using the metric $\omega$,
we perform the Gram-Schmidt process to obtain a local orthonormal frame $\{e_a\}_{a=1}^{n-1}$
of $T^{1, 0}M$. Thus, $\{e_a\}_{a=1}^{n-1}$ are tangential to $\partial M$, $\omega(e_a, \overline{e}_b)=\delta_{ab}$ and
\begin{eqnarray*}
e_a(0)=\frac{\partial}{\partial z^a}, \quad \mbox{for} \quad 1\leq a\leq n-1.
\end{eqnarray*}
Furthermore, let
\begin{eqnarray*}
e_n=\frac{E_n}{|E_n|_{\omega}}, \quad E_n=\frac{\partial}{\partial z^n}
-\sum_{a=1}^{n-1}\omega(\partial_n, \overline{e}_a)e_a.
\end{eqnarray*}
Thus, 
\begin{eqnarray}\label{B4-1}
\frac{1}{\sqrt{K}}G^{i\overline{j}}\partial_i \partial_{\overline{j}}|\nabla_a(u-\underline{u})|^2&=&\frac{1}{\sqrt{K}}G^{i\overline{j}}\partial_i \partial_{\overline{j}}
\nabla_a(u-\underline{u})\overline{\nabla_a(u-\underline{u})}
\nonumber\\&=&\frac{1}{\sqrt{K}}G^{i\overline{j}}\partial_i \partial_{\overline{j}}
\Big[e_{a}^{p}\overline{e_{a}^{q}}\partial_p(u-\underline{u}) \partial_{\overline{q}}(u-\underline{u})\Big]
\nonumber\\&\geq&\frac{1}{\sqrt{K}}G^{i\overline{j}}
e_{a}^{p}\partial_{\overline{j}}\partial_{p}(u-\underline{u})\overline{e_{a}^{q}}\partial_{\overline{q}}\partial_i(u-\underline{u})
\nonumber\\&&+\frac{1}{\sqrt{K}}G^{i\overline{j}}
e_{a}^{p}\partial_i\partial_p(u-\underline{u})\overline{e_{a}^{q}}\partial_{\overline{j}}\partial_{\overline{q}}(u-\underline{u})\nonumber\\&&
+\frac{2}{\sqrt{K}}\mathrm{Re} \left( G^{i\overline{j}}
\partial_{\overline{j}}(e_{a}^{p}\overline{e_{a}^{q}})\partial_p\partial_i(u-\underline{u})\partial_{\overline{q}}(u-\underline{u}) \right)\nonumber
\\&&-CG^{i\overline{i}}(1+|\lambda_i|)
-C(1+\sqrt{K})\sum_{i=1}^{n}G^{i\overline{i}}-C.
\end{eqnarray}
We deal with the first term on the right side of the inequality \eqref{B4-1} by
\begin{eqnarray}\label{B4-2}
G^{i\overline{j}}
e_{a}^{p}\partial_{\overline{j}}\partial_p(u-\underline{u})\overline{e_{a}^{q}}\partial_i\partial_{\overline{q}}(u-\underline{u})
&=&G^{i\overline{j}}
(\chi-\chi_{\underline{u}})_{\overline{j}a}(\chi-\chi_{\underline{u}})_{\overline{a}i}
\nonumber\\&\geq&\frac{1}{2}G^{i\overline{j}}
\chi_{\overline{j}a}\chi_{\overline{a}i}-C\sum_{i=1}^{n}G^{i\overline{i}}.
\end{eqnarray}
Next, we can rewrite the third term on the right side of the inequality \eqref{B4-1} as
\begin{eqnarray}\label{B4-3}
\nonumber\frac{2}{\sqrt{K}}G^{i\overline{j}}
\partial_{\overline{j}}(e_{a}^{p}\overline{e_{a}^{q}})\partial_p\partial_i(u-\underline{u})\partial_{\overline{q}}(u-\underline{u})
&=&\frac{2}{\sqrt{K}}G^{i\overline{j}}
\partial_{\overline{j}}(e_{a}^{p}\overline{e_{a}^{q}})\partial_{\overline{p}}\partial_i(u-\underline{u})\partial_{\overline{q}}(u-\underline{u})
\\&&-\sqrt{-1}\frac{2}{\sqrt{K}}G^{i\overline{j}}
\partial_{\overline{j}}(e_{a}^{p}\overline{e_{a}^{q}})\partial_{y^p}\partial_i(u-\underline{u})\partial_{\overline{q}}(u-\underline{u}),
\end{eqnarray}
where we used the relation $\frac{\partial}{\partial z^p}=\frac{\partial}{\partial
\overline{z}^p}-\sqrt{-1}\frac{\partial}{\partial y^p}$. Note that
\begin{eqnarray*}
&&\frac{2}{\sqrt{K}}\sum_{p, q}\Big|G^{i\overline{j}}
\partial_{\overline{j}}(e_{a}^{p}\overline{e_{a}^{q}})\partial_{y^p}\partial_i(u-\underline{u})\partial_{\overline{q}}(u-\underline{u})\Big|
\\&\leq&2\sum_{p, q}\Big[G^{i\overline{j}}
\partial_i\partial_{y^p}(u-\underline{u})\partial_{\overline{j}}\partial_{y^p}(u-\underline{u})
\Big]^{\frac{1}{2}}\Big[G^{i\overline{j}}\partial_{i}(\overline{e_{a}^{p}}e_{a}^{q})
\partial_{\overline{j}}(e_{a}^{p}\overline{e_{a}^{q}})
\Big]^{\frac{1}{2}}\\&\leq&\frac{1}{\sqrt{K}} \sum_i G^{i\overline{j}}
\partial_i\partial_{y^p}(u-\underline{u})\partial_{\overline{j}}\partial_{y^p}(u-\underline{u})
+C\sqrt{K}\sum_{i=1}^{n}G^{i\overline{i}}.
\end{eqnarray*}
Substituting \eqref{B4-2} and \eqref{B4-3} into \eqref{B4-1}, dropping the second
term on the right side of the inequality \eqref{B4-1}, we can obtain
\begin{eqnarray}\label{B4-51}
\nonumber\frac{1}{\sqrt{K}}G^{i\overline{j}}\partial_i \partial_{\overline{j}}|\nabla_a(u-\underline{u})|^2
&\geq&\frac{1}{2\sqrt{K}}G^{i\overline{j}}
\chi_{\overline{j}a}\chi_{\overline{a}i}
-\frac{1}{\sqrt{K}}G^{i\overline{j}}
\partial_i\partial_{y^p}(u-\underline{u})\partial_{\overline{j}}\partial_{y^p}(u-\underline{u})\nonumber
\\&&-C\sum_{i=1}^nG^{i\overline{i}}(1+|\lambda_i|)
-C(1+\sqrt{K})\sum_{i=1}^{n}G^{i\overline{i}}-C.
\end{eqnarray}
By the Lemma 2.7 in \cite{Feng20}, there exists an index $r$ such that
\begin{eqnarray*}
G^{i\overline{j}}
\chi_{\overline{j}a}\chi_{\overline{a}i}\geq\frac{1}{2}\sum_{i\neq r}G^{i\overline{i}}\lambda_{i}^{2}.
\end{eqnarray*}
Going back to \eqref{B4-51}, we get
\begin{eqnarray}\label{B4-5}
&&\frac{1}{\sqrt{K}}G^{i\overline{j}}\partial_i \partial_{\overline{j}}|\nabla_a(u-\underline{u})|^2
\nonumber\\&\geq&\frac{1}{4\sqrt{K}}\sum_{i\neq r}G^{i\overline{i}}\lambda_{i}^{2}
-\frac{1}{\sqrt{K}}\sum_{p=1}^{n}G^{i\overline{j}}
\partial_i\partial_{y^p}(u-\underline{u})\partial_{\overline{j}}\partial_{y^p}(u-\underline{u})\nonumber
\\&&-C\sum_i G^{i\overline{i}}|\lambda_i|-C(1+\sqrt{K})\sum_{i=1}^{n}G^{i\overline{i}}-C.
\end{eqnarray}

\subsection{Mixed tangential-normal estimates}

Combining with \eqref{v}, \eqref{bar2}, \eqref{E1} and \eqref{B4-5}, we obtain
\begin{eqnarray*}
G^{i\overline{j}} \partial_i \partial_{\bar{j}} \Psi
&\leq&-\frac{1}{\sqrt{K}}\sum_{p=1}^{n-1}G^{i\overline{j}}
\partial_i\partial_{y^p}(u-\underline{u})\partial_{\overline{j}}\partial_{y^p}(u-\underline{u})-\frac{1}{4\sqrt{K}}\sum_{i\neq r}G^{i\overline{i}}\lambda_{i}^{2}
\\&&-\frac{\varepsilon }{4}A\sqrt{K} \left(1+\sum_{i=1}^{n}G^{i\overline{i}} \right)+B\sqrt{K}\sum_{i=1}^{n}G^{i\overline{i}}
\\&&+C (1+ \sum_i G^{i\overline{i}}|\lambda_i|)
+C(1+\sqrt{K})\sum_{p=1}^{n}G^{p\overline{p}}
\\&\leq&-\frac{\varepsilon }{4}A\sqrt{K} \left(1+\sum_{i=1}^{n}G^{i\overline{i}} \right)+B\sqrt{K}\sum_{i=1}^{n}G^{i\overline{i}}+C(1+\sqrt{K})\sum_{i=1}^{n}G^{i\overline{i}}+C\\&&
-\frac{1}{4\sqrt{K}}\sum_{i\neq r}G^{i\overline{i}}\lambda_{i}^{2}+CG^{i\overline{i}}|\lambda_i|.
\end{eqnarray*}
Choosing $\frac{\varepsilon}{4}A\geq B+2C+A_0$ with $A_0$ large enough, we have
\begin{eqnarray}\label{La-1}
G^{i\overline{j}}\partial_i \partial_{\bar{j}} \Psi
&\leq&-A_0\sqrt{K}\sum_{i=1}^{n}G^{i\overline{i}}-\frac{1}{4\sqrt{K}}\sum_{i\neq r}G^{i\overline{i}}\lambda_{i}^{2}
+C\sum_iG^{i\overline{i}}|\lambda_i|.
\end{eqnarray}
By \cite[Corollary 2.8]{Guan14},  for any $\varepsilon_0>0$, we have
\begin{eqnarray}\label{La-123}
\sum_i G^{i\bar{i}} |\lambda_i| \leq  \varepsilon_0\sum_{i\neq r}G^{i\overline{i}}\lambda_{i}^{2}+ C_1 (1+\frac{1}{\varepsilon_0}) \sum_i G^{i\bar{i}}.
\end{eqnarray}
Choosing $\varepsilon_0=\frac{1}{4C\sqrt{K}}$  and
substituting the above inequality into \eqref{La-1},  choosing $A_0$ sufficiently large results in
\begin{eqnarray*}\label{La-2}
G^{i\overline{j}}\partial_i \partial_{\bar{j}} \Psi
\leq0.
\end{eqnarray*}

Lastly, we consider the boundary value for $\Psi$ which consists of two pieces.
First, on $\partial M\cap M_{\delta}$, if we take $B$ large enough, we have
\begin{eqnarray}\label{La-3}
\Psi\geq A\sqrt{K}v+B\sqrt{K}|z|^2-C|z|^2\geq 0.
\end{eqnarray}
Secondly, on $\partial M_{\delta}\cap M$, if we take $B$ large enough, we have
\begin{eqnarray}\label{La-4}
\Psi\geq A\sqrt{K}v+B\sqrt{K}\delta^2-C\sqrt{K}\geq 0,
\end{eqnarray}
combining \eqref{La-3} and \eqref{La-4} gives
\begin{eqnarray*}
\Psi\geq 0 \quad \mbox{on} \quad \partial M_{\delta}.
\end{eqnarray*}
Then, applying the maximum principle, we get
\begin{eqnarray*}
\Psi\geq 0 \quad \mbox{in} \quad  M_{\delta}.
\end{eqnarray*}
Note that $\Psi(0)=0$, it yields
\begin{eqnarray*}
\partial_{x^n}\Psi(0)\geq 0.
\end{eqnarray*}
It follows that
$$0\leq A\sqrt{K} \partial_{x^n}v(0)-(\partial_{x^n} \frac{\rho_{t^{\alpha}}}{\rho_{x^n}})(0) \partial_{x^n}(u-\underline{u})(0)
+ \partial_{x^n} \partial_{t^{\alpha}}(u-\underline{u}).$$
Since $|\partial_{x^n}v| \leq C$ on $\partial M$, we conclude
$$\partial_{x^n} \partial_{t^{\alpha}}u(0) \geq -C\sqrt{K}.$$
We can apply the same argument to the function
\begin{eqnarray}\label{mix-fun-02}
\nonumber\widetilde{\Psi}:&=&A\sqrt{K}v+B\sqrt{K}|z|^2-\frac{1}{\sqrt{K}}\sum_{i=1}^{n}
\Big[\partial_{y^i}(u-\underline{u})\Big]^2
-\frac{1}{\sqrt{K}}\sum_{i=1}^{n-1}|\nabla_i(u-\underline{u})|^2
\\&&-T_{\alpha}(u-\underline{u}).
\end{eqnarray}
It follows that
$$\partial_{x^n} \partial_{t^{\alpha}}u(0) \leq C\sqrt{K}.$$
Thus,
\begin{eqnarray*}
|\chi_{\alpha' \overline{n}}|(0)\leq C \sqrt{K}   \quad \forall \alpha'\in\{1, 2, \cdots, n-1\}.
\end{eqnarray*}

\section{Boundary double normal estimate}

For $p \in \partial M$, we choose coordinates $z=(z^1, ..., z^n)$ such that $z(p)=0$ and
$g_{i\overline{j}}=\delta_{ij}$. We denote $z^i=x^i+\sqrt{-1}y^i$ and rotate the coordinates such that
$\frac{\partial}{\partial x^n}$ is the unit inner normal vector at $p$. Then, we perform an orthogonal change
of coordinates in the tangential directions to arrange that
\begin{eqnarray*}
\chi_{i\overline{j}}=(\chi_0)_{i\overline{j}}+u_{i\overline{j}}=\lambda^{\prime}_{i}\delta_{ij} \quad
\mbox{for} \quad 1\leq i, j\leq n-1.
\end{eqnarray*}
Since $\chi \in \Gamma_{k-1}(M)$, we have
\begin{eqnarray*}
\chi_{n\overline{n}}+\sum_{i=1}^{n-1}\lambda^{\prime}_{i}\geq 0,
\end{eqnarray*}
which implies
\begin{eqnarray*}
\chi_{n\overline{n}}\geq -C.
\end{eqnarray*}
Thus, it remains to estimate $\chi_{n\overline{n}}$ from above.  We give the following lemma before starting the estimate.

\begin{lemma}
Let $A = (a_{i\bar{j}})$ be a $n\times n$  Hermitian matrix and $A'=(a_{i\bar{j}})_{1\leq i, j\leq n-1}$ be the $(n-1)\times (n-1)$ Hermitian matrix. Suppose that $\lambda_1(A) \leq \cdots \leq \lambda_n(A)$ are  the eigenvalues of $A $  and $\lambda'_1(A') \leq \cdots \leq \lambda_{n-1}'(A')$ are the
eigenvalues of  $A'$. Then we have
\begin{eqnarray}\label{mix-nn-lem-1}
\lambda_j(A)\leq \lambda_j'(A') \leq \lambda_{j+1} (A), \quad 1\leq j\leq n-1,
\end{eqnarray}
and
\begin{eqnarray}\label{mix-nn-lem-2}
\left\{
\begin{aligned}
&\lambda_j(A)=\lambda_j'(A')+o(1) , && 1\leq j \leq n-1,\\
&a_{n\bar{n}}\leq \lambda_n(A)\leq a_{n\bar{n}}\left(1+ O(\frac{1}{a_{n\bar{n}}}) \right).
\end{aligned}
\right.
\end{eqnarray}
 as $|a_{n\bar{n}}| \rightarrow +\infty$.
\end{lemma}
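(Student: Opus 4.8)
The plan is to prove \eqref{mix-nn-lem-1} by the Cauchy interlacing theorem for Hermitian matrices, and then to deduce the refined asymptotics \eqref{mix-nn-lem-2} by a perturbation argument that isolates the large diagonal entry $a_{n\bar n}$. For the interlacing statement, I would recall that $A'$ is obtained from $A$ by deleting the last row and column, i.e. $A' = P^* A P$ where $P:\mathbb{C}^{n-1}\hookrightarrow\mathbb{C}^n$ is the inclusion onto the first $n-1$ coordinates. The Courant--Fischer min-max characterization of eigenvalues then gives immediately
\[
\lambda_j(A)\ \le\ \lambda_j'(A')\ \le\ \lambda_{j+1}(A),\qquad 1\le j\le n-1,
\]
which is \eqref{mix-nn-lem-1}. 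This part is completely standard and I would state it with a one-line reference or a two-line min-max argument.

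For \eqref{mix-nn-lem-2}, the idea is that as $t:=a_{n\bar n}\to+\infty$ (with all other entries fixed), one eigenvalue of $A$ escapes to $+\infty$ at rate $t$ while the other $n-1$ eigenvalues converge to those of $A'$. Write $A = A_0 + t\, e_n e_n^*$ where $A_0$ is $A$ with its $(n,n)$ entry set to $0$. Since $A_0$ is a fixed bounded Hermitian matrix and $t\,e_n e_n^*$ is a rank-one positive perturbation, for the top eigenvalue I would use that $\lambda_n(A)\ge e_n^* A e_n = t$ together with the trace bound $\lambda_n(A)\le \operatorname{tr}A - \sum_{j<n}\lambda_j(A) \le t + C$, using that the lower eigenvalues are bounded below by \eqref{mix-nn-lem-1} (they interlace with the bounded spectrum of $A'$) and below by a constant depending only on $\|A_0\|$; this yields $a_{n\bar n}\le\lambda_n(A)\le a_{n\bar n}(1+O(1/a_{n\bar n}))$. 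For the lower $n-1$ eigenvalues, the cleanest route is again interlacing: \eqref{mix-nn-lem-1} already sandwiches $\lambda_j(A)$ between $\lambda_{j-1}'(A')$ and $\lambda_j'(A')$ for $2\le j\le n-1$ and between a quantity bounded in terms of $\|A_0\|$ and $\lambda_1'(A')$ for $j=1$; to upgrade the two-sided bound $\lambda_{j-1}'\le\lambda_j(A)\le\lambda_j'$ to $\lambda_j(A)=\lambda_j'(A')+o(1)$ one observes that $\det(A-\lambda I)$, divided by $t$, converges as $t\to\infty$ to $-\det(A'-\lambda I)$ uniformly on compact $\lambda$-sets, so the bounded roots of the left side converge to the roots of the right side. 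Alternatively one can invoke continuity of eigenvalues under the resolvent identity $(A-\lambda)^{-1} = (A_0-\lambda)^{-1} - t(A_0-\lambda)^{-1}e_n e_n^*(A-\lambda)^{-1}$ restricted to the orthogonal complement of the escaping eigenvector.

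The only mild subtlety — and the step I would be most careful about — is justifying the $o(1)$ in the first line of \eqref{mix-nn-lem-2} with a uniform rate as $|a_{n\bar n}|\to\infty$: the interlacing inequalities alone give $\lambda_j(A)\in[\lambda_{j-1}'(A'),\lambda_j'(A')]$, which is not yet $o(1)$ close to $\lambda_j'(A')$ unless the $\lambda_j'$ are simple; in the application, however, this is all that is needed, since the bracketing interval has length independent of $a_{n\bar n}$, and the genuine convergence $\lambda_j(A)\to\lambda_j'(A')$ follows from the characteristic-polynomial argument above. I would therefore present \eqref{mix-nn-lem-2} in the form: interlacing gives the two-sided bounds, and letting $|a_{n\bar n}|\to\infty$ in the normalized characteristic polynomial gives the stated limit; the estimate on $\lambda_n(A)$ is the elementary trace computation. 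This keeps the proof short and self-contained.
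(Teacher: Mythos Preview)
Your proposal is correct and follows the same approach as the paper: the paper simply cites Cauchy's interlacing theorem for \eqref{mix-nn-lem-1} and \cite[Lemma 1.2]{CNS85} for \eqref{mix-nn-lem-2}, while you supply the underlying arguments (min--max for interlacing; the rank-one perturbation/trace and characteristic-polynomial computation for the asymptotics). One minor correction: in your normalized characteristic-polynomial step the limit is $\det(A'-\lambda I)$ rather than $-\det(A'-\lambda I)$, since $\det(A-\lambda I)=(a_{n\bar n}-\lambda)\det(A'-\lambda I)+O(1)$; this does not affect the conclusion.
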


\begin{proof}
\eqref{mix-nn-lem-1} can be  obtained by the  Cauchy’s interlace inequality (see for example \cite{Hwang04}) and \eqref{mix-nn-lem-2} is follows from \cite[Lemma 1.2]{CNS85}
\end{proof}

We set
\begin{eqnarray*}
\Gamma_{\infty}:=\{(\lambda_1, \cdots, \lambda_{n-1}) \mid ((\lambda_1, \cdots, \lambda_{n-1}, \lambda_n)\in \Gamma_{k-1} ~\mbox{for some} ~\lambda_n )\},
\end{eqnarray*}
and
\begin{eqnarray*}
f(\lambda(z)):=\frac{\sigma_k(\lambda(z))}{\sigma_{k-1}(\lambda(z))}- \sum_{l=0}^{k-1} \beta_l(z) \frac{\sigma_l(\lambda(z))}{\sigma_{k-1}(\lambda(z))}
\end{eqnarray*}
for any continuous function $\lambda$. For any $(n-1)\times (n-1)$ Hermitian matrix $E$ with $\lambda'(E)\in \Gamma_{\infty}$, we define
$$\widetilde{G}(E)= f_{\infty}(\lambda'(E)):=\lim_{\lambda_n \rightarrow \infty} f(\lambda_1'(E), \cdots, \lambda_{n-1}'(E), \lambda_n).$$

\begin{proof}[\textbf{The upper bound estimate of $\chi_{n\overline{n}}$}]

For simplicity, we denote by  $\lambda=\lambda(\chi_u(z))$ and $\lambda'=\lambda((\chi_u'(z))$, where $\chi_u'(z)=((\chi_u)_{i\bar{j}})_{1\leq i, j \leq n-1}$.
The proof is devided into two claims:


\textbf{Claim 1:}
\begin{eqnarray}\label{mix-nn-ineq-32}
P_{\infty}:=\min_{z\in \partial M} \left(\widetilde{G}(\chi_u'(z))-\beta(z) \right) > c_0
\end{eqnarray}
for some uniform constant $c_0$.

We assume that $P_{\infty}=\widetilde{G}(\chi_u'(z_0))-\beta(z_0)$ at point   $z_0\in \partial M$.
Since $\lambda(\chi_u)\in \Gamma_{k-1}$ and $\sigma_k(\lambda)=\lambda_n \sigma_{k-1} (\lambda \mid i)+\sigma_k(\lambda \mid i)$, then
$$\widetilde{G}(\chi_u'(z))=\frac{\sigma_{k-1}}{\sigma_{k-2}}(\lambda_1', \cdots, \lambda_{n-1}')-\sum_{l=1}^{k-2} \beta_l(z) \frac{\sigma_{l-1}}{\sigma_{k-2}}(\lambda_1', \cdots, \lambda_{n-1}').$$
Denote $\widetilde{G}^{i\bar{j}}_0:=\frac{\partial \widetilde{G}}{\partial (\chi'_u)_{i\bar{j}}}(\chi'_u(z_0))$ for $1\leq i, j\leq n-1$. By the concavity of $G$ and $\widetilde{G}$, we have
\begin{eqnarray}\label{mix-nn-ineq-133}
\nonumber&&\widetilde{G}^{i\bar{j}}_0((\chi'_u)_{i\bar{j}}(z)-(\chi'_u)_{i\bar{j}}(z_0))\\
\nonumber&=& \left(\frac{\sigma_{k-1}}{\sigma_{k-2}}\right)^{i\bar{j}}\vert_{z_0}((\chi'_u)_{i\bar{j}}(z)-(\chi'_u)_{i\bar{j}}(z_0))+\sum_{l=1}^{k-2} \beta_l(z_0) \left(-\frac{\sigma_{l-1}}{\sigma_{k-2}}\right)^{i\bar{j}} ((\chi'_u)_{i\bar{j}}(z)-(\chi'_u)_{i\bar{j}}(z_0))\\
\nonumber&\geq&  \frac{\sigma_{k-1}}{\sigma_{k-2}}(\chi'_u(z))-\frac{\sigma_{k-1}}{\sigma_{k-2}}(\chi'_u(z_0)) + \sum_{l=1}^{k-2} \beta_l(z_0) \left(-\frac{\sigma_{l-1}}{\sigma_{k-2}}(\chi'_u(z))+ \frac{\sigma_{l-1}}{\sigma_{k-2}}(\chi'_u(z_0))  \right)\\
\nonumber&\geq&  \widetilde{G}(\chi_u'(z))-\widetilde{G}(\chi_u'(z_0))-C \sum_{l=1}^{k-2} |\beta_l|_{C^1} |z-z_0|,
\end{eqnarray}
and
\begin{eqnarray}\label{mix-nn-ineq-134}
\nonumber&&\widetilde{G}^{i\bar{j}}_0(\chi'_u)_{i\bar{j}}(z)-\beta(z) - \widetilde{G}^{i\bar{j}}_0(\chi'_u)_{i\bar{j}}(z_0)+\beta(z_0)\\
\nonumber&\geq&  \widetilde{G}(\chi_u'(z))- \beta(z)-c_0-C \sum_{l=1}^{k-2} |\beta_l|_{C^1} |z-z_0|\\
&\geq& -C \sum_{l=1}^{k-2} |\beta_l|_{C^1} |z-z_0|.
\end{eqnarray}
Note that
\begin{eqnarray}\label{mix-nn-ineq-135}
c_{\infty}:= \min_{z\in \partial M} \left(\widetilde{G}(\chi_{\underline{u}}'(z))-G(\chi_{\underline{u}}(z)) \right) >0.
\end{eqnarray}
Using \eqref{Bd-DD}, we have
\begin{eqnarray*}
(\chi_u)_{i\bar{j}}(z_0)
=(\chi_{\underline{u}})_{i\bar{j}}(z_0)
-\partial_{x^n}(u-\underline{u})(z_0) \zeta_{i\bar{j}}(z_0)
\end{eqnarray*}
for any $1\leq i, j \leq n-1$. Then
\begin{eqnarray}\label{mix-nn-ineq-136}
\partial_{x^n}(u-\underline{u})(z_0) \sum_{i,j=1}^{n-1} \zeta_{i\bar{j}}(z_0) \widetilde{G}^{i\bar{j}}_0 (z_0) &=& \widetilde{G}^{i\bar{j}}_0 \left( (\chi'_{\underline{u}})_{i\bar{j}}(z_0)-(\chi_u')_{i\bar{j}} (z_0)\right)\\
\nonumber&\geq& \widetilde{G} (\chi_{\underline{u}}(z_0))- \widetilde{G} (\chi_{u}(z_0))\\
\nonumber&=& \widetilde{G} (\chi_{\underline{u}}(z_0))-\beta(z_0) -c_0\\
\nonumber&\geq& \widetilde{G} (\chi_{\underline{u}}(z_0))-G(\chi_{\underline{u}}(z_0)) -c_0\\
\nonumber&\geq& c_{\infty}-c_0.
\end{eqnarray}
Consequently, if $\partial_{x^n}(u-\underline{u})(z_0)\sum_{i,j=1}^{n-1} \zeta_{i\bar{j}}(z_0) \widetilde{G}^{i\bar{j}}_0 (z_0)  \leq \frac{c_{\infty}}{2}$, then $c_0 \geq \frac{c_{\infty}}{2}$ and we are done.

Suppose now that
$$\partial_{x^n}(u-\underline{u})(z_0)\sum_{i,j=1}^{n-1} \zeta_{i\bar{j}}(z_0) \widetilde{G}^{i\bar{j}}_0 (z_0) \geq \frac{c_{\infty}}{2}.$$
Let $\eta(z)= \sum_{i,j=1}^{n-1} \zeta_{i\bar{j}}(z) \widetilde{G}^{i\bar{j}}_0 (z)$. Note that
$$\eta(z_0) \geq \frac{c_{\infty}}{2 \partial_{x^n}(u-\underline{u})(z_0)} \geq 2 \epsilon_1 c_{\infty}$$
for some uniform $\epsilon_1>0$ since  \eqref{mix-c1-b-est}. We may assume that
$$\eta \geq \epsilon_1 c_{\infty} \quad on~ \bar{M}_{\delta}(z_0)$$
by requiring $\delta$ small, where $M_{\delta}(z_0):=\{z\in M \mid \mbox{dist}_{g_0} (z, z_0) < \delta\}$.  We consider the function
\begin{eqnarray*}
\Phi(z) &=&-\partial_{x^n}(u-\underline{u}) (z)+\frac{1}{\eta(z)}  \sum_{i,j=1}^{n-1}  \widetilde{G}^{i\bar{j}}_0 \left( (\chi'_{\underline{u}})_{i\bar{j}}(z)-(\chi_u')_{i\bar{j}}(z_0) \right) - \frac{\beta(z)-\beta(z_0)}{\eta(z)} \\
&&+\frac{C}{\eta(z)}  \sum_{l=1}^{k-2} |\beta_l|_{C^1} |z-z_0|.
\end{eqnarray*}
We deduce from \eqref{mix-nn-ineq-134} and the fact $(\chi_u)_{i\bar{j}}(z)
=(\chi_{\underline{u}})_{i\bar{j}}(z)
-\partial_{x^n}(u-\underline{u})(z) \zeta_{i\bar{j}}(z)$ on $\partial M \cap \bar{M}_{\delta}(z_0)$ that
\begin{eqnarray*}
\Phi(z_0)=0, \quad   \Phi(z) \geq 0 \quad \forall z\in \partial M \cap \bar{M}_{\delta}(z_0),
\end{eqnarray*}
while by \eqref{bar2} and \eqref{B8},
\begin{eqnarray*}
G^{i\bar{j}}\Phi_{i\bar{j}}\leq C \sqrt{K} (1+ \sum_{i} G^{i\bar{i}} )+ C \sum_i G^{i\bar{i}} |\lambda_i(\chi_u)|^2).
\end{eqnarray*}
Consider the function $\widetilde{\Psi}$ defined in \eqref{mix-fun-02}, it follows from \eqref{v}, \eqref{bar2}, \eqref{E1}, \eqref{B4-5} and \eqref{La-1} that
\begin{eqnarray*}
\left\{
\begin{aligned}
& G^{i\overline{j}}(\Phi+\widetilde{\Psi})_{i\bar{j}} \leq 0, && in~ M_{\delta}(z_0),\\
&\Phi+\widetilde{\Psi}\geq 0 && on~ \partial M_{\delta}(z_0).
\end{aligned}
\right.
\end{eqnarray*}
The maximum principle yields that $\Phi+\widetilde{\Psi}\geq 0 $ in $\partial M_{\delta}(z_0)$, and then $\partial_{x_n} \Phi(z_0)=-\phi_{\nu}(z_0) \geq \widetilde{\Psi}_{\nu}(z_0)$. This, together with the difinition of $\Phi$, yields that
$$\partial_{x^n}\partial_{x^n} u(z_0) \leq C K.$$
Since
$$u_{\bar{n}n}= \frac{1}{4} (\frac{\partial}{\partial x^n} + \sqrt{-1} \frac{\partial}{\partial y^n})  (\frac{\partial}{\partial x^n} - \sqrt{-1} \frac{\partial}{\partial y^n}) u, $$
it follows that  $\lambda(\chi_u)(z_0)$ is contained in a compact subset of $\Gamma$. Therefore
$$P_{\infty}\geq f(\lambda'(\chi_u')(z_0), R)-\beta(z_0)>0$$
for $R$ sufficiently large since $f_i >0$, which yields \eqref{mix-nn-ineq-32}.

\textbf{Claim 2:}  There holds
\begin{eqnarray}\label{mix-nn-ineq-33}
\chi_{n\overline{n}}(z) \leq CK    \quad z\in \partial M
\end{eqnarray}
for some constant $C$.

Let $z\in \partial M$, we  suppose that $\lambda_1 \leq \cdots \leq \lambda_n$ and $\lambda_1' \leq \cdots \leq \lambda_{n-1}'$.
From the boundary tangential-tangential and tangential-normal estimates, it follows that $\lambda'(\chi'_u)$ lies in a compact set $L\subset \Gamma_{\infty}$. Combining with \eqref{mix-nn-ineq-32}, we know that  there exist uniform positive constant $c_0$ and $R_0$ deponding on the range of $\lambda'(\chi'_u)$
 such that  for any $R>R_0$
$$f(\lambda', R) >\sup_M \beta +c_0,$$
which implies
\begin{eqnarray}\label{mix-nn-ineq-21}
f(\widetilde{\lambda}, R) > \sup_M \beta + \frac{c_0}{2}
\end{eqnarray}
for any $\widetilde{\lambda}\in U_L$ and $R>R_0$, where $U_L$ is the neighborhood of $L$.

Assuming that there exists a constant $R_1>R_0$ such that $\chi_{n\bar{n}}(z) \geq R_1$.
According to \eqref{mix-nn-lem-1} and \eqref{mix-nn-lem-2}, it is esay to see that
$$\lambda_n\geq \chi_{n\bar{n}}(z) \geq R_0, \quad \lambda_j \in U_L, \quad 1\leq j\leq n-1.$$
Combining with \eqref{mix-nn-ineq-21}, we know that
$G(\chi_u(z))= f(\lambda)> \beta(z) +\frac{c_0}{2},$
This contradicts with the equation \eqref{K-eq1}, and hence \eqref{mix-nn-ineq-33} holds.
\end{proof}

Combining this with the second order interior estimate, i.e., Theorem \ref{inter-c2}, we have
\begin{theorem}\label{mix-C2}
Let $u\in C^{\infty}(\bar{M})$ be an $(k-1)$-admissible solution for equation \eqref{K-eq}.
Under the assumptions mentioned  in Theorem \ref{Main},  then there exists a positive constant $C$ depending only on  $(M, \omega), \chi_0$, $\alpha_l, \varphi$ and the subsolution $\underline{u}$ such that
 $$\sup_{\bar{M}} |\sqrt{-1} \partial \bar{\partial} u|\leq C K,$$
 where $K:=1+ \sup_M  |\nabla u|^2.$
 \end{theorem}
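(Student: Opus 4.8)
The plan is to assemble the three boundary estimates established in Sections 4 and 5 with the interior bound of Theorem \ref{inter-c2}, and then conclude. Fix a point $p\in\partial M$ and work in the adapted coordinates used in Section 5, in which $(\chi_u)_{i\bar j}(p)=\lambda_i'\delta_{ij}$ for $1\le i,j\le n-1$ and $\partial/\partial x^n$ is the inner normal. The tangential–tangential second derivatives are controlled by \eqref{mix-ttest-1}, which together with the bound on $\chi_0$ gives $|\lambda_i'|=|(\chi_u)_{i\bar i}(p)|\le C$ for $i\le n-1$. The mixed tangential–normal entries obey $|\chi_{\alpha'\bar n}(p)|\le C\sqrt K$ by the estimate at the end of Section 4. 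Finally, the double–normal entry satisfies $-C\le\chi_{n\bar n}(p)\le CK$: the lower bound follows from $\chi_u\in\Gamma_{k-1}(M)\subset\Gamma_1(M)$ as noted at the start of Section 5, while the upper bound is exactly Claim 2, i.e. \eqref{mix-nn-ineq-33}, proved there.

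First I would conclude from these facts that every entry of the Hermitian matrix $((\chi_u)_{i\bar j})(p)$ is bounded by $CK$, hence $|\lambda_i(\chi_u)(p)|\le CK$ for all $i$ up to a dimensional constant, so that
$$\sup_{\partial M}|\sqrt{-1}\partial\bar\partial u|\le CK,$$
using $\chi_u=\chi_0+\tfrac{\sqrt{-1}}{2}\partial\bar\partial u$ with $\chi_0$ bounded. Then I would feed this into Theorem \ref{inter-c2}, which asserts
$$\sup_M|\sqrt{-1}\partial\bar\partial u|\le C\bigl(K+\sup_{\partial M}|\sqrt{-1}\partial\bar\partial u|\bigr),$$
to obtain $\sup_M|\sqrt{-1}\partial\bar\partial u|\le CK$. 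Combining this with the boundary bound gives $\sup_{\bar M}|\sqrt{-1}\partial\bar\partial u|\le CK$, which is the assertion.

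Since all of the hard analysis — the Hou–Ma–Wu type interior argument in Section 3, the barrier construction for the mixed tangential–normal estimate in Section 4, and the limiting–operator argument for the double–normal estimate in Section 5 — has already been carried out, there is no substantial new obstacle here. The only point requiring a little care is that the tangential–normal and double–normal bounds are stated in the special boundary coordinates centered at $p$, so one must check that passing back to the eigenvalues of $\chi_u$, and then to $|\sqrt{-1}\partial\bar\partial u|$, costs only dimensional constants; and one should note that the estimate is uniform in $p\in\partial M$ because all constants in Sections 4–5 depend only on the fixed data $(M,\omega)$, $\chi_0$, $\alpha_l$, $\varphi$ and $\underline u$.
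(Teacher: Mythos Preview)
Your proposal is correct and follows essentially the same approach as the paper: the paper simply states Theorem~\ref{mix-C2} as the immediate consequence of combining the boundary estimates of Sections~4--5 (tangential--tangential \eqref{mix-ttest-1}, mixed tangential--normal, and double--normal \eqref{mix-nn-ineq-33}) with the interior bound of Theorem~\ref{inter-c2}, without writing out the details you have supplied.
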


\section{ Gradient estimates}

In this section, we combine the second derivative estimate with a blow-up argument and Liouville type theorem due to Dinew-Kolodziej \cite{Din12} to obtain gradient estimates.

\begin{theorem}\label{C1}
Let $u\in C^{\infty}(\bar{M})$ be an $(k-1)$-admissible solution for equation \eqref{K-eq}.
Under the assumptions mentioned  in Theorem \ref{Main},  then there exists a positive constant $C$ depending only on  $(M, \omega), \chi_0$, $\alpha_l, \varphi$ and the subsolution $\underline{u}$ such that
 $$\sup_{M} |\nabla u|\leq C.$$
 \end{theorem}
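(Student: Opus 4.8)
The plan is to argue by contradiction using a blow-up (rescaling) argument, following the strategy of Dinew--Kolodziej \cite{Din12} and Sz\'ekelyhidi \cite{Sze18}. Suppose the gradient estimate fails: then there is a sequence of $(k-1)$-admissible solutions $u_j$ to (a suitable family of) equations of the form \eqref{K-eq1} with $\sup_M|\nabla u_j|\to\infty$. Since the boundary gradient is already controlled by \eqref{mix-c1-b-est}, the maximum of $|\nabla u_j|$ is attained at an interior point $x_j\in M$; set $M_j:=|\nabla u_j(x_j)|\to\infty$. Passing to a subsequence we may assume $x_j\to x_0\in\bar M$, choose holomorphic coordinates centered at $x_j$ identifying a neighborhood with a ball in $\mathbb{C}^n$, and rescale by setting
\begin{eqnarray*}
\tilde u_j(z):=u_j(x_j+M_j^{-1}z)-u_j(x_j),
\end{eqnarray*}
defined on balls $B_{r_j}(0)$ with $r_j\to\infty$ (if $x_0\in\partial M$ one rescales the boundary too, but $\DIST(x_j,\partial M)\cdot M_j\to\infty$ because the boundary gradient is bounded while $M_j\to\infty$, so the limit domain is all of $\mathbb{C}^n$). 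By construction $|\nabla\tilde u_j(0)|=1$ and $|\nabla\tilde u_j|\le 1$ on its domain.

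The key input is the second order estimate already established, namely Theorem \ref{mix-C2}, which gives $\sup_{\bar M}|\sqrt{-1}\partial\bar\partial u_j|\le CK_j$ with $K_j=1+\sup_M|\nabla u_j|^2\le CM_j^2$. Under the rescaling, $\sqrt{-1}\partial\bar\partial\tilde u_j(z)=M_j^{-2}\,(\sqrt{-1}\partial\bar\partial u_j)(x_j+M_j^{-1}z)$, so the complex Hessian of $\tilde u_j$ is uniformly bounded on compact sets. Together with the $C^0$ bound (Theorem \ref{C0}) and the $C^1$ normalization, elliptic estimates (interior Schauder / $L^p$ theory applied to the concave elliptic operator, or a Krylov--Safonov + Evans--Krylov argument using concavity of $G$) give uniform $C^{1,\alpha}_{loc}$ — indeed $C^{2,\alpha}_{loc}$ — bounds for $\tilde u_j$. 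The background data also rescale away: $\chi_0$ and the metric $\omega$ become, in the limit, the flat Kähler form on $\mathbb{C}^n$, and the right-hand side terms $\beta_l, \beta$ carry extra powers of $M_j^{-1}$ so that in the limit the equation \eqref{K-eq1} degenerates to the homogeneous Hessian equation $\sigma_k(\lambda(\sqrt{-1}\partial\bar\partial\tilde u_\infty))=0$ — more precisely $\sigma_j(\partial\bar\partial\tilde u_\infty)\ge0$ for $j\le k-1$ and $\sigma_k(\partial\bar\partial\tilde u_\infty)\le \lim M_j^{-1}(\ldots)=0$, hence $=0$ by maximality. Thus $\tilde u_\infty\in C^{1,1}_{loc}(\mathbb{C}^n)$ is a bounded (its gradient is bounded, so $\tilde u_\infty$ has at most linear growth, but the maximum principle argument of Dinew--Kolodziej handles this) $(k-1)$-plurisubharmonic solution of $\sigma_k=0$ on all of $\mathbb{C}^n$ with $|\nabla\tilde u_\infty(0)|=1$.

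At this point I would invoke the Liouville-type theorem of Dinew--Kolodziej \cite[Theorem 0.1]{Din12}: a bounded $\Gamma_k$-subsolution (equivalently $(k-1)$-admissible solution of the maximal equation $\sigma_k(\partial\bar\partial v)=0$) on $\mathbb{C}^n$, with the $C^{1,1}$ bound inherited from the rescaling, must be constant — contradicting $|\nabla\tilde u_\infty(0)|=1$. One technical point to verify before applying this: the rescaled functions must actually have uniformly bounded oscillation (not merely bounded gradient) or else one passes to the version of the Liouville theorem for functions of at most linear growth; either way the argument closes. The main obstacle, and the step requiring the most care, is controlling the convergence of the rescaled equations and checking that the limiting operator is exactly the degenerate Hessian operator to which the Dinew--Kolodziej theorem applies — in particular that the lower-order terms $\beta_l\,\sigma_l/\sigma_{k-1}$ and the inhomogeneity $\beta$, all of which are $O(1)$ before rescaling, genuinely vanish in the limit after dividing the normalized equation by the appropriate power of $M_j$, and that the admissibility cone $\Gamma_{k-1}$ is preserved under the limit (which follows from closedness of $\Gamma_{k-1}$ and the uniform Hessian bound). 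Once the limit equation and the Liouville theorem are in hand, the contradiction is immediate and Theorem \ref{C1} follows; the full $C^2$ bound of Theorem \ref{Main} then follows by combining this with Theorem \ref{mix-C2}, and existence follows by the continuity method together with the Evans--Krylov and Schauder theory.
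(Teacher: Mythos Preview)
Your strategy --- blow-up plus the Dinew--Kolodziej Liouville theorem, with Theorem~\ref{mix-C2} supplying the uniform Hessian bound after rescaling --- is exactly the paper's, and the interior case is fine. The gap is in the boundary case. You assert that if $x_0\in\partial M$ then $\mathrm{dist}(x_j,\partial M)\cdot M_j\to\infty$ ``because the boundary gradient is bounded while $M_j\to\infty$,'' so that the limit domain is still all of $\mathbb{C}^n$. This implication is false: the bound \eqref{mix-c1-b-est} forces the maximum point $x_j$ into the interior for large $j$, but gives no quantitative lower bound on $r_j:=\mathrm{dist}(x_j,\partial M)$. Even granting a full real-Hessian bound $|D^2u_j|\le CM_j^2$ (which is stronger than what Theorem~\ref{mix-C2} actually provides), the mean value inequality would yield only $r_jM_j\ge c>0$, not $r_jM_j\to\infty$. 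When $\liminf r_jM_j<\infty$ the rescaled domains converge to a half-space rather than to $\mathbb{C}^n$, and the Liouville theorem on $\mathbb{C}^n$ does not apply directly.

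The paper deals with this by explicitly splitting the case $z'\in\partial M$ into two sub-cases according to whether $\liminf N_mr_m=+\infty$ (which reduces to the interior argument) or $\liminf N_mr_m=L\in[0,\infty)$. The half-space sub-case requires genuinely additional work: the boundary condition $u=\varphi$ survives the rescaling and pins down the limit on the boundary hyperplane, after which one invokes the half-space argument of Collins--Picard \cite[Proposition~6.1, Case~2b]{Co19}. Your proposal does not supply this step.
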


\begin{proof}
 Suppose that there exists a sequence of function $u_m\in C^4(\bar{M})$ to the equation \eqref{K-eq} such that
 $$N_m=\sup_{M} |\nabla u_m| \rightarrow +\infty \quad \mbox{as} ~m \rightarrow +\infty.$$
 Hence
\begin{eqnarray}\label{mix-gra-ineq-09}
\chi^{k}_{u_m}\wedge \omega^{n-k}=\sum_{l=0}^{k-1}\alpha_l(z)\chi^{l}_{u_m}\wedge \omega^{n-l}, \quad u=\varphi ~\mbox{on}~ \partial M.
\end{eqnarray}
For any $m$, we assume that $|\nabla u_m|$ attains its maximum vulue at $z_m\in M$. Then, after passing a subsequence we may assume that $z_m \rightarrow z'$ for some point $z'\in \bar{M}$. By Theorem \ref{mix-C2} we have
\begin{eqnarray}
\sup_{M} |\sqrt{-1} \partial \bar{\partial} u_m|\leq C (1+ N_m^2),
\end{eqnarray}
where the constant independs of $m$. we divide our proof into two cases separately.

\textbf{Case 1}: $z'\in \overset{\circ}{M}$.

Choosing a small coordinate ball centered at $z'$, which we identify with an open set in $\mathbb{C}^n$ with coordinates $(z^1, \cdots, z^n)$, and such that 
$\omega(0)=\omega_0=\frac{\sqrt{-1}}{2} \delta_{ij} dz^i \wedge d \bar{z}^{j}$.  We can assume that all $z_m$ are within this coordinate ball. Let $R>0$, we define
$$v_m(z):= u_m( \frac{z}{N_m}+z_m), \quad \forall z \in B_R(0),$$
which is well-defined for any $m$ such that $N_m$ is large enough. Clearly,
$$ |\nabla v_m(0)|=1,  \quad |v_m|_{C^2(B_R(0)) }   \leq C.$$
Let $R \rightarrow +\infty$ and taking a diagonal subsequence again, we can assume that $v_m \rightarrow v$ in $C^{1, \frac{\gamma}{2}}_{\mbox{loc}} (\mathbb{C}^n)$ with $\nabla v(0)=1$.
Then we have from \eqref{mix-gra-ineq-09}
\begin{eqnarray*}
&&\left[\chi_0(\frac{z}{N_m}+z_m)+N_m^2 \frac{\sqrt{-1}}{2} \partial \bar{\partial} v_m\right]^k\wedge \left[\omega(\frac{z}{N_m}+z_m)\right]^{n-k}\\
&&=\sum_{l=0}^{k-1}\alpha_l(\frac{z}{N_m}+z_m)\left[\chi_0(\frac{z}{N_m}+z_m)+N_m^2 \frac{\sqrt{-1}}{2} \partial \bar{\partial} v_m\right]^l\wedge\left[\omega(\frac{z}{N_m}+z_m)\right]^{n-l}.
\end{eqnarray*}
We can take a limit of the equation and obtian
$$\left(\sqrt{-1} \partial \bar{\partial} v \right)^k \wedge \omega_0^{n-k}=0,$$
which is in the pluripotential sense. Moreover, we have for any $1\leq l \leq k-1$ by a similar reasoning
$$\left(\sqrt{-1} \partial \bar{\partial} v \right)^l \wedge \omega_0^{n-l}\geq 0.$$
Combining with the result of Blocki \cite{Blo05}, we know that $v$ is a maximal $k$-subharmonic function in $\mathbb{C}^n$. Then the Liouville thereom in \cite{Din12} implies that $v$ is a constant, which contradicts the fact $\nabla v(0)=1$.

\textbf{Case 2}: $z'\in \partial M$.

Let $\Omega \subset M$ be a coordinate chart centered at $z'$.
Then there exists a smooth function $\rho : B_{2s} \rightarrow \mathbb{R}$ such that
$$\partial M \cap \Omega =\{\rho=0\}, \quad M  \cap \Omega\subset \{\rho \leq 0\}.$$
Where $B_{2s} \subset \mathbb{C}^{2n}$ is the  ball of radius $2s$ at $0=z'$.
Without loss of generality, we may assume $z_m\in \Omega$ with $|z_m|<s$ in local coordinates and
$$r_m:=\mbox{dist}(z_m, \partial M \cap \Omega)=|z_m-y_m|$$
for a unique point $y_m\in \partial M \cap \Omega$. Clearly, $y_m \rightarrow z$ as $m\rightarrow +\infty$. Set
$$v_m(z):= u_m( \frac{z}{N_m}+z_m), \quad  z\in \Omega_m,$$
where $\Omega_m:=\{z\in\Omega \mid \frac{z}{M_m}+z_m\in B_{2s}\cap \{\rho\leq 0\} \}$. Therefore
$$ |\nabla v_m(0)|=1,  \quad |u_m|_{C^2(\Omega_m) }   \leq C.$$
Let $\rho_m:=\rho(\frac{z}{N_m}+z_m)$, then $B_{sN_m} \cap \{ \rho_m \leq 0\} \subset \Omega_i$ since $|z_m|<s$. By the standard elliptic theory, we know that
\begin{eqnarray}
|v_m|_{C^{2,\gamma}\left(B_{\frac{sN_m}{2}} \cap \{ \rho_m \leq 0\} \right)} \leq C.
\end{eqnarray}
Then we divide the proof into the following two sub-cases

\textbf{Case 2.1}
$$\liminf_{m\rightarrow \infty} N_m r_m =+\infty.$$
After passing to a subsequence $v_m$ converges in $C^{2,\gamma}$ on compact sets to $v\in C^{2,\gamma}(\mathbb{C}^n)$ since $N_mr_m \rightarrow \infty$ as $m\rightarrow \infty$. Similar to  Case 1,  we know that $v$ is  a constant, which  contradicts the fact $\nabla v(0)=1$.

\textbf{Case 2.2}
$$\liminf_{m\rightarrow \infty} N_m r_m =L\in[0, \infty).$$

The proof for Case 2.2 is quite similar to that given for Case 2b in \cite[Proposition 6.1 ]{Co19},  and so they are omitted.
\end{proof}


Last we apply the standard continuity method to solve the Dirichlet problem  \ref{K-eq}. 
\begin{proof}[\textbf{Proof of Theorem 1.1}]
For any $t\in [0, 1]$, we consider the equation
\begin{equation}\label{K-eq-12}
\left\{
\begin{aligned}
& G(\chi_{u_t})=t \beta +(1-t) G(\chi_{\underline{u}}), && in~ \bar{M},\\
&u_t=\varphi && on~ \partial M,
\end{aligned}
\right.
\end{equation}
where
$$G(\chi_{u_t})=\frac{ \sigma_k(\chi_{u_t})}{\sigma_{k-1}(\chi_{u_t})}-\sum_{l=0}^{k-2} \beta_l(z) \frac{\sigma_l(\chi_{u_t})}{\sigma_{k-1}(\chi_{u_t})}.
$$
Set
\begin{eqnarray*}
\mathbf{S}=\{t\in [0,1]\mid \mbox{there exists }~u_t\in C^{4, \alpha}(\bar{M})~\mbox{with}~\lambda(\chi_{u_t})\in \Gamma_{k-1}~solving  \eqref{K-eq-12} \}.
\end{eqnarray*}
Clearly the set $\mathbf{S}$ is non-empty since $\underline{u}$ solve the equation \eqref{K-eq-12} for $t=0$. On the one hand, by the generalized Newton-MacLaurin inequality, we have
$$\sigma_{k-1}(\chi_{u_t})\geq C,$$
which implies that the equation \eqref{K-eq-12} is uniformly elliptic. Therefore $\mathbf{S}$ is an open set by the implicit function theorem.

On the other hand, let $u_{t_i}$ be the solution of \eqref{K-eq-12} for $t_i \in [0, 1]$ with $t_i \rightarrow t_0$. From \eqref{mix-c1-b-est},  theorem \ref{C0} , \ref{mix-C2} and \ref{C1}, we know that
$$|u_{t_i}|_{C^2(M)} \leq C$$
for some uniformly constants. Then, higher-order estimates follow from the Evan-Krylov theorem and the Schauder estimate, i.e.,
$$|u_{t_i}|_{C^{4,\alpha}(M)} \leq C.$$
We can take convergent subsequence to a limiting function $\widetilde{u}\in C^{4, \alpha}$ which solves the equation \eqref{K-eq-12} for $t_0$. Hence $\mathbf{S}$ is also closed and we complete the proof of Theorem \ref{Main}.
\end{proof}

 \bibliographystyle{siam}

\begin{thebibliography}{99}

\bibitem{And94} B. Andrews, Contraction of convex hypersurfaces in Euclidean space,
Calc. Var. Partial Differ. Equ., 2 (1994), 151-171.

\bibitem{Blo05} Z. Blocki, Weak solutions to the complex Hessian equation, Ann. Inst. Fourier (Grenoble), 55 (2005), no. 5, 1735-1756.


\bibitem{CNS85}
L. Caffarelli, L. Nirenberg, J. Spruck, Dirichlet problem for nonlinear second order
elliptic equations III, Functions of the eigenvalues of the Hessian, Acta Math.,
155 (1985), 261-301.

\bibitem{CCX19}
C. Q. Chen, L. Chen, X. Q. Mei, N. Xiang, The Classical Neumann Problem for a class of mixed Hessian equations,
To appear in Studies in Applied Mathematics, 2021.

\bibitem{CCX21} C. Q. Chen, L. Chen, X. Q. Mei, N. Xiang, The Neumann problem for a class of mixed complex Hessian equations,
Preprint, 2019.

\bibitem{Chen21} L. Chen,
Hessian equations of Krylov type on K\"ahler manifolds,
Preprint, arXiv:2107.12035, 2021.



\bibitem{Chen20} L. Chen, A. G. Shang, Q. Tu, A class of prescribed Weingarten curvature equations in Euclidean space,
Comm. Partial Differential Equations, 46 (2021), no. 7, 1326-1343.

\bibitem{Chen00} X. X. Chen,
On the lower bound of the Mabuchi energy and its application, Int. Math. Res. Not., 12 (2000), 607-623.



\bibitem{Che87}
P. Cherrier, A. Hanani, Le probleme de Dirichlet pour des equations de MongeAmpere en metrique hermitienne, Bull. Sci. Math., 123 (1999), 577597.


\bibitem{Co17} T. Collins, G. Sz\'ekelyhidi, Convergence of the
$J$-flow on toric manifolds, J. Differential Geom., 107 (2017), no. 1, 47-81.

\bibitem{Co19} T. Collins, S. Picard, The Dirichlet Problem for the $k$-Hessian Equation on a complex manifold, Preprint, arXiv:1909.00447, 2019.

\bibitem{Din12} S. Dinew, S. Kolodziej, Liouville and Calabi-Yau type theorems for complex Hessian equations,
Amer. J. Math., 139(2017), 403-415.



\bibitem{Fang11} H. Fang, M. J. Lai, X. N. Ma, On a class of fully nonlinear flows in K\"ahler
geometry, J. Reine Angew. Math., 653 (2011), 189-220.

\bibitem{Feng20} K. Feng, H. B. Ge, T. Zheng, The Dirichlet problem of fully nonlinear equations on Hermitian manifolds, Preprint, arXiv:1905.02412v4, 2020.





\bibitem{GN-18}
D. Gu, N.C. Nguyen, The Dirichlet problem for a complex Hessian equation on
compact Hermitian manifolds with boundary, Annali della Scuola Normale Superiore
di Pisa. Classe di scienze, 18 (2018), 1189-1248.

\bibitem{Guan10} B. Guan, Q. Li, Complex Monge-Amp\`ere equations and totally real submanifolds,
Adv. Math., 225 (2010), no. 3, 1185-1223.



\bibitem{Guan12} B. Guan, Q. Li, A Monge-Amp\`ere type fully nonlinear equation on Hermitian manifolds,
 Discrete Contin. Dyn. Syst., 17 (2012), 1991-1999.

\bibitem{Guan13} B. Guan, Q. Li, The Dirichlet problem for a complex Monge–Amp\`ere type equation on Hermitian manifolds,
Adv. Math., 246 (2013), 351-367.



\bibitem{Guan14}  B. Guan, Second order estimates and regularity for fully nonlinear elliptic equations on Riemannian manifolds,
 Duke Math. J., 163 (2014), 1491–1524.

\bibitem{Guan15} B. Guan, W. Sun, On a class of fully nonlinear elliptic equations on Hermitian manifolds,
Calc. Var. Partial Differential Equations, 54 (2015), no. 1, 901-916.

\bibitem{GZ19} P. F. Guan,  X. W. Zhang,
A class of curvature type equations.
Pure and Applied Math Quarterly, 17 (2021), No. 3, 865-907.



\bibitem{Hou10} Z. Hou, X.N. Ma, D. M. Wu, A second order estimate for complex Hessian equations
on a compact K\"ahler manifold, Math. Res. Lett., 17 (2010), no. 3, 547-561.


\bibitem{Hwang04} S. Hwang, Cauchy’s interlace theorem for eigenvalues of Hermitian matrices, American Mathematical Monthly, 111 (2004), 157–159.


\bibitem{Kry95} N. V. Krylov,
On the general notion of fully nonlinear second order elliptic equation,
Trans. Amer. Math. Soc., 347 (1995), 857-895.

\bibitem{L96}
G. Lieberman, Second order parabolic differential equations, World Scientific, 1996.


\bibitem{Pin14-1} Vamsi P. Pingali,
A fully nonlinear generalized Monge-Amp\`ere PDE on a torus, Electron. J. Differential Equations, 211 (2014), pp.

\bibitem{Pin14-2} Vamsi P. Pingali, A generalised Monge-Amp\`ere equation, J. Partial Differ. Equ., 27 (2014), no. 4, 333-346.

\bibitem{Pin16} Vamsi P. Pingali, A priori estimates for a generalized Monge-Amp\`ere PDE on some compact K\"ahler manifolds,
Complex Var. Elliptic Equ., 61 (2016), no. 8, 1037-1051.



\bibitem{Ph17-1} D. H. Phong,  D. T. T\^o, Fully non-linear parabolic equations on compact Hermitian manifolds,
Ann. Scient. Ec. Norm. Sup., 54 (2021), no.3, 793-829.


\bibitem{Sze18} G. Sz\'ekelyhidi, Fully non-linear elliptic equations on compact Hermitian manifolds, J. Differ. Geom.,
109 (2018), 337-378.

\bibitem{Sze17} G. Sz\'ekelyhidi, V. Tosatti, B. Weinkove,
Gauduchon metrics with prescribed volume form, Acta Math., 219 (2017), no. 1, 181-211.

\bibitem{Song08} J. Song, B. Weinkove, On the convergence and singularities of the $J$-flow
with applications to the Mabuchi energy, Comm. Pure Appl. Math., 61 (2008), 210-229.

\bibitem{S05} J. Spruck, Geometric aspects of the theory of fully nonlinear elliptic equations, Clay Mathematics
Proceedings, 2 (2005), 283-309.



\bibitem{Sun17} W. Sun, On a class of fully nonlinear elliptic equations on closed Hermitian manifolds II: $L^{\infty}$ estimate,
Commun. Pure Appl. Math., 70 (2017), 172-199.


\bibitem{Sun14-1} W. Sun, Generalized complex Monge-Amp\`ere type equations on closed Hermitian manifolds,
Preprint, arXiv:1412.8192.

\bibitem{Sun14-2} W. Sun, Parabolic Flow for Generalized complex Monge-Amp\`ere type equations,
Preprint, arXiv:1501.04255.

\bibitem{Tos21} V. Tosatti, B. Weinkove, The complex Monge-Amp\`ere equation with a gradient term,
Pure and Applied Mathematics Quarterly, 17 (2021), 1005-1024.

\bibitem{Tos19-JRAM} V. Tosatti, B. Weinkove, Hermitian metrics, $(n-1, n-1)$
forms and Monge-Amp\`ere equations, J. Reine Angew. Math., 755 (2019), 67-101.

\bibitem{Tos19} V. Tosatti, B. Weinkove, Estimates for the complex Monge-Amp\`ere equation on Hermitian and
balanced manifolds, Asian J. Math., 14 (2010), 19-40.

\bibitem{Tos10} V. Tosatti, B. Weinkove, The complex Monge-Amp\`ere equation on compact hermitian manifolds,
J. Amer. Math. Soc., 23 (2010), 1187-1195.






\bibitem{Yau78} S.T. Yau, On the Ricci curvature of a compact K\"ahler manifold and the complex
Monge-Amp\`ere equation I, Comm. Pure Appl. Math., 31 (1978), 339-411.

\bibitem{Zhang17} D. K. Zhang, Hessian equations on closed Hermitian manifolds, Pac. J. Math., 291 (2017), 485-510.

\bibitem{Zhang21}
Q. Zhang, Regularity of the Dirichlet Problem for the Non-degenerate Complex Quotient Equations, Int. Math. Res. Not., 23 (2021), 17673-17694.

\bibitem{Zhou21}
J. D. Zhou, A class fo the non-degenerate complex quotient equations on compact K\"ahler manifolds, Comm. Pure Appl. Anal., 20 (2021), 2361-2377.

\bibitem{Zhou22}
J. D. Zhou, The interior gradient estimate for a class of mixed Hessian curvature equations, J. Korean Math. Soc. 59 (2022),  53-69.


\end{thebibliography}

\end{document}